%% file: main.tex
\documentclass{article}
\usepackage{settings}

\title{AUTOEQUIVALENCES OF DERIVED CATEGORIES OF BIELLIPTIC SURFACES}
\author{Yuki Tochitani}
\date{}
\begin{document}

\maketitle

\makeatletter
\def\@makefnmark{}
\makeatother

\footnotetext{\emph{2020 Mathematics Subject Classification.} Primary 14F08; Secondary 14J27, 18G80}
\footnotetext{\emph{Keywords: derived category, autoequivalence group, bielliptic surface, elliptic fibration}}

\begin{abstract}
    We determine the generators of the autoequivalence group of the derived category of coherent sheaves on a bielliptic surface over an algebraically closed field of arbitrary characteristic. As a consequence, we prove that any algebraic variety derived equivalent to such a surface is isomorphic to the surface itself.
\end{abstract}

\input{sections/section1/section1_main}
\input{sections/section2/section2_main}
\input{sections/section3/section3_main}
\input{sections/section4/section4_main}
\bibliography{references}
\bibliographystyle{amsalpha}

\bigskip

email: y.tochitani0112@gmail.com

\end{document}

%% file: sections/section1/section1_main.tex
\section{Introduction}
\input{sections/section1/intro}
\input{sections/section1/notation}

%% file: sections/section1/intro.tex
\subsection{Motivations and results}

Let $X$ be a smooth projective variety over an algebraically closed field $k$ of arbitrary characteristic, and denote by $D(X)$ the bounded derived category of coherent sheaves on $X$.  
We study the group of autoequivalences of the $k$-linear triangulated category $D(X)$, denoted by $\Auteq D(X)$. 

This group always contains the subgroup $\Pic (X) \rtimes \Aut (X) \times \mathbb{Z}$, which is generated by pullbacks along automorphisms of $X$, tensoring with line bundles and the shift functor. 
The autoequivalences belonging to this subgroup are said to be \textit{standard}.
For example, if the (anti-)canonical divisor of $X$ is ample, then every autoequivalence of $D(X)$ is a standard autoequivalence \cite[Theorem 3.1]{MR1818984}. 
Otherwise, $\Auteq D(X)$ does not necessarily contain non-standard autoequivalences.

The group $\Auteq D(X)$ has also been studied for abelian varieties \cite{MR1921811}, K3 surfaces of Picard rank 1  \cite{MR3592689}, certain types of elliptic surfaces \cite{MR3568337}, \cite{MR3652778} and surfaces of general type \cite{MR2198807}. However, many cases remain to be explored.

In this article, we study $\Auteq D(X)$ for a bielliptic surface $X$.  A bielliptic surface is a smooth projective surface with $K_{X} \equiv 0, b_{2}=2$ and the smooth Albanese map. It is well known that any bielliptic surface has exactly two elliptic fibrations.
The group $\Auteq D(X)$ has non-standard autoequivalences, which we call \textit{relative Fourier--Mukai transforms} along the elliptic fibrations.  
Bielliptic surfaces are classified into two types: \textit{cyclic} and \textit{non-cyclic}, and the generators of $\Auteq D(X)$ are described for cyclic case in $\Char(k) =0$ (\cite[Theorem 1.2]{potter2017derived}).
We generalize this result to any type in arbitrary characteristic.

\begin{mthm} \label{MT1}
    Let $X$ be a bielliptic surface over an algebraically closed field $k$ of arbitrary characteristic. Then $\Auteq D(X)$ is generated by standard autoequivalences and relative Fourier--Mukai transforms along the two elliptic fibrations.
\end{mthm}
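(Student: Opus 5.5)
The plan follows the strategy for elliptic surfaces of nonzero Kodaira dimension (Uehara) and Potter's argument for the cyclic case: given $\Phi\in\Auteq D(X)$ with kernel $P\in D(X\times X)$, we compose $\Phi$ with explicit generators until it sends skyscraper sheaves of points to shifted skyscraper sheaves. Once that holds, a standard lemma (Huybrechts, Cor.~5.23, valid in any characteristic) shows that $\Phi$ is standard, i.e.\ it lies in $\Pic(X)\rtimes\Aut(X)\times\mathbb{Z}$.

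\textbf{Step 1 (action on cohomology/numerics).} Since $K_X\equiv 0$ is torsion of order $n$, we will use the action of $\Phi$ on the numerical Grothendieck group $N(X)$, which has rank $4$: rank, $\mathrm{Num}(X)\cong \mathbb{Z}^2$, and Euler characteristic. The numerical lattice is spanned by the fibre classes $A,B$ of the two elliptic fibrations $X\to A/G$ and $X\to \mathbb{P}^1$, with $A\cdot B=\mu$. We will show that $\Phi^N$ preserves the Mukai pairing, so its image lands in a concrete arithmetic group, essentially $\mathrm{SL}_2(\mathbb{Z})\times\mathrm{SL}_2(\mathbb{Z})$ up to congruence and finite-index conditions depending on the type of $X$. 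Next we compute the images of standard autoequivalences and of the relative Fourier--Mukai transforms along each fibration. The latter are built from fine relative moduli spaces of stable sheaves on fibres; for a bielliptic surface these moduli spaces are again isomorphic to $X$, or to another bielliptic surface of the same type that must be handled. Each such transform acts on the hyperbolic plane $\langle$rank, fibre degree$\rangle$ of its fibration as an element of $\mathrm{SL}_2$. We then check that these images generate the full image of $\Auteq D(X)\to O(N(X))$.

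\textbf{Step 2 (reduction to preserving points).} After composing with generators we may assume that $\Phi^N$ fixes the class $(0,0,1)$ of a point. The key claim is that $\Phi(\mathcal{O}_x)$ is then a shifted sheaf supported at a point. Following Uehara's argument, $\Phi$ commutes with $-\otimes\omega_X$ up to twisting, so $\Phi$ preserves the canonical cover structure. Lifting to the canonical cover, or arguing with the fibrations, $\Phi$ sends fibres of one fibration to objects supported on fibres, which reduces the problem to the behaviour of autoequivalences along an elliptic curve. We then use the fact that a point-like object ($\operatorname{Hom}^0=k$, class of a point, $\Phi(\mathcal{O}_x)\otimes\omega\cong\Phi(\mathcal{O}_x)$) on a surface with $K$ numerically trivial and containing no $(-2)$-curves must be a shifted skyscraper. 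To get this, we show that the support is $0$-dimensional by intersecting with ample divisors through $\chi$-computations, then apply the Bridgeland--Maciocia lemma.

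\textbf{Main obstacle.} The hardest part is Step 2 in positive characteristic and in the non-cyclic case, where $\omega_X$ may have order divisible by $p$ (non-reduced Picard scheme, wild fibres). There the canonical-cover and Galois-descent arguments Potter uses fail. I would first try to avoid covers entirely: use that the relative Jacobians of both fibrations are again bielliptic, identified via Bridgeland's theorem on elliptic surfaces, which holds in arbitrary characteristic for fine relative moduli. I would also control $\Phi(\mathcal{O}_x)$ through its restriction to fibres of the Albanese map, which is smooth, so no wild fibres occur on that side. A secondary difficulty is the surjectivity in Step 1 when the moduli space along the $\mathbb{P}^1$-fibration is a different bielliptic surface; we would handle this by composing two such transforms to return to $X$.
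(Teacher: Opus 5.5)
Your endpoint matches the paper: make $\Phi(\mathcal{O}_x)$ a shifted skyscraper, then apply Huybrechts' Corollary 5.23. But both steps leading there are missing their essential inputs.

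\textbf{Step 1.} Step 1 cannot be carried out as written, because the image of $\Auteq D(X)$ is not determined by the Mukai pairing. Your description of $\Num(X)$ is also wrong.
\begin{itemize}
\item $\Num(X)$ is spanned by $\frac{1}{\lambda_1}F_1$ and $\frac{1}{\lambda_2}F_2$, not by the fibre classes. Also $F_1\cdot F_2=\rank(G)$, not $\mu$. So $\Num(X)$ is the hyperbolic plane and the Mukai lattice is $U\oplus U$.
\item The isometries of $U\oplus U$ include every $A_1\otimes A_2$ with $A_i\in\SL_2(\mathbb{Z})$. The pairing imposes no congruence condition.
\item Standard autoequivalences and relative Fourier--Mukai transforms only realize $A_i\in\Gamma(\lambda_i)$ (\cref{matrix}, \cref{SMRFM}). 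This is a proper subgroup, since $\lambda_2=\mu\ge 2$.
\end{itemize}
So ``checking that these images generate the full image'' requires proving that no autoequivalence realizes the remaining isometries. The same applies to your claim in Step 2 that one can arrange $\Phi^N(0,0,1)=(0,0,1)$. For example, you must rule out the point class going to $(1,0,0)$ or to $(0,\frac{1}{\lambda_2}F_2,0)$. This is geometric, and it is the bulk of the paper:
\begin{itemize}
\item \cref{Prop:rank} uses the canonical-cover lift (\cref{RIP}) to show that ranks of images of points are divisible by $\ord(\omega_X)=\lambda_2$ in the relevant types.
\item In type $(2,\mu_2)$, where $\omega_X$ is trivial, \cref{AVA} rules out $(0,0,1)\mapsto(1,0,0)$. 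By \cref{BIS}, simple sheaves with $\ch=(1,0,0)$ are invertible, so the kernel would factor through the elliptic curve $\underline{\Pic}^0(X)_{\mathrm{red}}$.
\item \cref{Prop:frac} shows that a rank-zero image $(0,\frac{s}{\lambda_i}F_i,t)$ has $\lambda_i\mid s$. Otherwise every $\Psi(\mathcal{O}_x)$ would be supported in the multiple fibres of $f_2$.
\item \cref{COP} gives the coprimality needed to realize $(0,aF_i,b)$ by a relative transform.
\end{itemize}
None of these ingredients appears in your plan.

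\textbf{Step 2.} You give no working argument that $\Phi(\mathcal{O}_x)$ is a shifted sheaf, which is exactly what is needed to conclude from the class $(0,0,1)$. Each route you propose fails:
\begin{itemize}
\item Euler characteristics against $E$ depend only on $\ch(E)=(0,0,1)$; for instance $\chi(E\otimes\mathcal{O}_X(nH))=1$ and $\chi(\mathcal{O}_H,E)=0$ for all $n$ and $H$. So they cannot distinguish a skyscraper from a complex whose positive-dimensional cohomology sheaves have cancelling classes.
\item The assertion that $\Phi$ sends fibres to objects supported on fibres has no mechanism here. In Uehara's setting it comes from the fibration being the Iitaka fibration ($\kappa(X)=1$). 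For $\kappa(X)=0$, relative transforms along $f_2$ send $\mathcal{O}_{F_1}$ to objects of positive rank.
\item Lifting to the canonical cover is vacuous when $\omega_X\simeq\mathcal{O}_X$, which occurs in characteristic $2$ and $3$. Even when $\omega_X\not\simeq\mathcal{O}_X$, the cover may be a bielliptic surface with trivial canonical bundle rather than an abelian surface, so Orlov's theorem does not apply.
\end{itemize}
You also misplace the difficulty. By \cref{table}, $\ord(\omega_X)$ is always prime to $\Char(k)$, so \cref{LCC} always applies; the real obstacles are trivial $\omega_X$ and non-abelian covers.

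The missing idea is \cref{Prop:sheaf}. On a bielliptic surface with trivial $\omega_X$ there are no rigid sheaves: for torsion sheaves this follows from $D^2\ge 0$; in general it uses \cref{mukai} applied to the torsion filtration and the last Harder--Narasimhan factor, together with Koseki's Bogomolov--Gieseker inequality. The spectral sequence then bounds $\sum_i\dim\Ext^1(\mathcal{H}^i,\mathcal{H}^i)$ by $\dim\Ext^1(\mathcal{O}_x,\mathcal{O}_x)=2$. The non-trivial $\omega_X$ case follows by lifting to the cover.

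Finally, $J_X(a,b)\simeq X$ for both fibrations. Your fallback of passing through a different surface is therefore unnecessary, and it would not produce autoequivalences of $X$ anyway.
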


The main tool in \cite[Theorem 1.2]{potter2017derived} is the canonical cover of $X$. In the case $\Char(k) =0$, the canonical cover of a bielliptic surface is an abelian surface, and Potter uses some geometric and homological properties of abelian surfaces. 
However, in the case $\Char(k) >0$, the canonical cover of a bielliptic surface is either an abelian surface or a bielliptic surface. 
Thus we need to take an alternative approach to generalize \cite[Theorem 1.2]{potter2017derived}.

As a consequence of \cref{MT1}, we obtain the following theorem.
Let $\Phi \in \Auteq D(X)$ be an autoequivalence. Then $\Phi$ induces an automorphism $\Phi_{M}$ on the numerical Chow group $\mathbb{Z} \oplus \Num (X) \oplus \mathbb{Z} (\simeq \mathbb{Z}^{4})$. 
Let $\Gamma (\lambda_{i})$ denote the subgroup of $\SL_{2}(\mathbb{Z})$ as
$\Gamma (\lambda_{i}) \coloneqq \inset{
    \begin{bmatrix}
        c & a \\
        d & b \\
    \end{bmatrix} 
    \in \SL_{2}(\mathbb{Z})}{
    a \in \lambda_{i} \mathbb{Z}
}$
, where $\lambda_{i}$ is given in \cref{bie}.

\begin{thm}
    [=\cref{ESC}]
    Let $X$ be a bielliptic surface over $k$ of arbitrary characteristic. Then we have a short exact sequence
    \[
        1 
        \to  \Aut (X) \times \mathbb{Z} [2]
        \to \Auteq D(X) 
        \xrightarrow{\pi} \{ A_{1} \otimes A_{2} \in \GL_{4}(\mathbb{Z}) |
        A_{i} \in \Gamma (\lambda_{i}) \}
        \to 1,
    \]
    where $\pi$ is defined by $\pi (\Phi) = \Phi_{M}$
    for $\Phi \in \Auteq D(X)$.
\end{thm}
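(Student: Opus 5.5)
We plan to deduce the sequence from \cref{MT1} together with the action of generators on the numerical Chow group $N(X)=\mathbb{Z}\oplus\Num(X)\oplus\mathbb{Z}$.

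First we fix coordinates. Since $b_2=2$ and the two elliptic fibrations $f_i\colon X\to C_i$ have fibre classes $F_1,F_2$, we use $\Num(X)$ with a basis built from $F_1,F_2$, rescaled by the multiplicities $\lambda_i$ of \cref{bie} so that the basis elements generate $\Num(X)$ and the intersection form is hyperbolic up to these scalars. We identify $N(X)\cong \mathbb{Z}^2\otimes\mathbb{Z}^2$: the factor $\mathbb{Z}^2_i$ records rank and degree along the fibres of $f_i$, so that $(r,D,\chi)$ corresponds to a $2\times 2$ matrix. The Mukai pairing should then become (a multiple of) the determinant pairing, whose isometry group contains $\SL_2\otimes\SL_2$ as the identity component.

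Next we compute $\Phi_M$ on the generators given by \cref{MT1}:
\begin{itemize}
\item automorphisms act trivially on $\Num(X)$, because $\Num(X)$ is spanned by fibre classes, which automorphisms preserve up to numerical equivalence (no automorphism swaps the two fibrations, since their fibres have different self-intersection properties with respect to multiple fibres);
\item $[1]$ acts by $-1=(-1)\otimes 1$;
\item $\otimes L$ acts by $\begin{bmatrix}1&0\\ d_1&1\end{bmatrix}\otimes\begin{bmatrix}1&0\\ d_2&1\end{bmatrix}$ with $d_i=L\cdot F_j$;
\item a relative Fourier--Mukai transform along $f_i$ acts as $A\otimes 1$, where $A$ is the cohomological action of the relative moduli (a Jacobian-type construction).
\end{itemize}
The constraint $a\in\lambda_i\mathbb{Z}$ comes from the existence of relative moduli: a fine relative moduli space of fibrewise sheaves of rank $a$ and degree $b$ exists and is isomorphic to $X$ exactly when $\lambda_i\mid a$. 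Here $\lambda_i$ plays the role of the multiplicity or index of the fibration, and we use Bridgeland's criterion $\gcd(a\lambda, b)=1$ type condition. These generators generate $\Gamma(\lambda_1)\otimes\Gamma(\lambda_2)$, since $\Gamma(\lambda)$ is generated by lower unipotents and the matrices arising from the FM kernels. This gives surjectivity onto the stated group and shows that the image lies in it.

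For the kernel, suppose $\Phi_M=\mathrm{id}$. We write $\Phi$ as a word in the generators and, using \cref{MT1}, reduce to $\Phi=\phi^*(-\otimes L)[n]$. Then $n$ is even, since $\pm1$ ambiguity arises only from $(-1)\otimes(-1)=\mathrm{id}$, which is $[2]$ up to sign bookkeeping; hence only even shifts survive. Also $c_1(L)$ must be numerically trivial, so $L\in\Pic^\tau(X)$. The main obstacle is exactly here: showing that $\otimes L$ for $L\in\Pic^\tau$ (torsion and $\Pic^0$) is absorbed into $\Aut(X)$. The first thing to try is to conjugate translation-type automorphisms by relative FM transforms, since these exchange $\Pic^0$ of a fibre with translations along it. Alternatively, $\otimes L$ might be composed with a relative FM to land in $\Aut$, so the kernel is really $\Aut(X)\times\mathbb{Z}[2]$ modulo this identification. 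We would verify this using the fact that relative FM transforms along $f_i$ send $\otimes\mathcal{O}(\text{fibre-degree-}0)$ to translations by sections of the Jacobian fibration. We would also verify that $\Aut(X)\times\mathbb{Z}[2]$ injects, which follows from the action on skyscraper sheaves.
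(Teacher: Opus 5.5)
Your treatment of the image and of surjectivity is essentially the paper's: generators from \cref{MT1}, matrices from \cref{LM} and \cref{matrix}, and line-bundle corrections as in \cref{SMRFM}. You also make explicit that $\pi$ lands in $\{A_1\otimes A_2\}$, which the paper leaves implicit. Your account of where $\lambda_i$ enters is off, though:
\begin{itemize}
\item \cref{RFM} requires $\lambda_i\mid d$ and $a>0$, and $J_X(a,b)\simeq X$ for every admissible $(a,b)$, not only when $\lambda_i\mid a$.
\item The entry $a\lambda_i$ arises because the fibre degree along $f_i$ is $\lambda_i$ times the coefficient of $F_j/\lambda_j$ ($j\neq i$).
\item A relative transform acts by $A$ tensored with a unipotent factor $\begin{bmatrix}1&0\\ s&1\end{bmatrix}$, not by $A\otimes 1$.
\end{itemize}

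\textbf{First gap (kernel reduces to standard).} Writing $\Phi\in\Ker\pi$ as a word in the generators and invoking \cref{MT1} does not show that $\Phi$ is standard. Generation says nothing about which words act trivially. The missing idea is the paper's:
\begin{itemize}
\item $\Phi^A(0,0,1)=(0,0,1)$, and $\Phi$ is a sheaf transform by \cref{Prop:sheaf}.
\item Hence every $\Phi(\mathcal{O}_x)$ is a shifted skyscraper sheaf.
\item \cref{JOS} then makes $\Phi$ standard.
\end{itemize}
After that, $[n]$ acts by $(-1)^n$ and $\phi^*\circ(-\otimes L)$ acts by a unipotent lower-triangular matrix. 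So $n$ is even and $L\in\Pic^\tau(X)$. Your identity $(-1)\otimes(-1)=1$ only reflects the non-uniqueness of tensor decompositions, not the action of $[2]$.

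\textbf{Second gap (absorbing $-\otimes L$ into $\Aut(X)$).} The step you single out as the main obstacle cannot be carried out, because it is false.
\begin{itemize}
\item For $L\in\Pic^\tau(X)$ the functor $-\otimes L$ fixes every $\mathcal{O}_x$.
\item By contrast, $\phi_*\circ[2m]$ sends $\mathcal{O}_x$ to $\mathcal{O}_{\phi(x)}[2m]$.
\item So $-\otimes L\in\Aut(X)\times\mathbb{Z}[2]$ forces $\phi=\id$, $m=0$ and $L\simeq\mathcal{O}_X$.
\item Conjugating by relative Fourier--Mukai transforms cannot change membership in a subgroup.
\end{itemize}
Non-trivial such $L$ always exist. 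For instance, $\underline{\Pic}^0(X)_{\mathrm{red}}$ is an elliptic curve, as used in the proof of \cref{AVA}, and $\omega_X$ itself qualifies when $\ord(\omega_X)>1$. These $L$ act trivially on $\mathbb{Z}\oplus\Num(X)\oplus\mathbb{Z}$ because $c_1(L)\equiv 0$.

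So your analysis actually shows $\Ker\pi=(\Pic^\tau(X)\rtimes\Aut(X))\times\mathbb{Z}[2]$. The paper's proof reaches $\Aut(X)\times\mathbb{Z}[2]$ only via the claim that every non-trivial line-bundle twist acts non-trivially on the numerical Chow group. \cref{LM} and \cref{NUM} justify that claim only for $L$ not numerically trivial. The right repair is to enlarge the kernel in the statement, not to look for an absorption argument.
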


Another interesting problem related to $D(X)$ is what the Fourier--Mukai partners of $X$ are. 
A variety $Y$ is called \textit{a Fourier--Mukai partner} of $X$ if there exists a $k$-linear triangulated equivalence $D(X) \simeq D(Y)$. 

For example, if the (anti-)canonical divisor of $X$ is ample, $X$ has only the trivial Fourier--Mukai partner (i.e. $X$ itself) \cite[Theorem 2.5]{MR1818984}. In general, however, there exist pairs of varieties that are mutually Fourier--Mukai partners but not isomorphic to each other. 
The first example of such pairs was discovered by Mukai in \cite{nmj/1118786312}.
He proved that an abelian variety $A$ and its dual $\hat{A}$ are derived equivalent. 

In the case $\Char(k)=0$, if a surface $X$  has a non-trivial Fourier--Mukai partner, then $X$ is either a K3 surface, an abelian surface, or a relatively minimal elliptic surface with non-zero Kodaira dimension \cite[Theorem 1.1]{MR1827500}, \cite[Theorem 1.6]{10.4310/jdg/1090351323}. In particular, a bielliptic surface in $\Char (k) =0$ has only the trivial Fourier--Mukai partner. 
In the case $\Char (k) \ge 5$, bielliptic surfaces are liftable, and it is followed by this property that there do not exist non-trivial Fourier--Mukai partners of a bielliptic surface in $\Char (k) \ge 5$ (\cite[Theorem 1.2]{MR4247995}).
In the case $\Char (k) =2$ or $3$, although the  liftability of bielliptic surfaces does not necessarily hold,
an argument in \cref{MT1} shows the next theorem.

\newcounter{breakpoint7}
\setcounter{breakpoint7}{\value{mthm}}

\begin{mthm} \label{MT2}
    A bielliptic surface $X$ over an algebraically closed field $k$ of arbitrary characteristic has no non-trivial Fourier--Mukai partners.
\end{mthm}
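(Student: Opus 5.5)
Let $Y$ be a smooth projective variety with an exact $k$-linear equivalence $D(X)\simeq D(Y)$. By Orlov's representability theorem (valid in arbitrary characteristic) it is a Fourier--Mukai transform $\Phi_{\mathcal P}$ with kernel $\mathcal P\in D(X\times Y)$. The plan is in three stages: first show that $Y$ is itself a bielliptic surface; then show that $Y$ is a moduli space of stable sheaves on a fibre of one of the elliptic fibrations of $X$, i.e.\ a relative Fourier--Mukai partner; finally use the explicit description of relative moduli spaces over bielliptic surfaces to conclude $Y\simeq X$. The last two stages are the same analysis of kernels that gives \cref{MT1}.

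\textbf{Step 1: $Y$ is bielliptic.} Derived invariants give $\dim Y=2$ and that the order of $\omega_Y$ equals the order of $\omega_X$. In particular $\omega_Y$ is torsion, so $K_Y\equiv 0$. Hochschild homology is preserved, and with it the $\ell$-adic Betti numbers: the Mukai vector/$\ell$-adic cohomology argument works in any characteristic. Hence $b_1(Y)=2$ and $b_2(Y)=2$. By the Bombieri--Mumford classification of surfaces with $K\equiv 0$, $Y$ is bielliptic or quasi-bielliptic. In characteristics $2,3$ the quasi-bielliptic case must be excluded. I would do this by comparing $\dim \Pic^0$ with the smoothness of $\Pic^0$. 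More robustly, I would observe that $\Phi$ sends the skyscraper sheaves of $X$, which have Mukai vector $(0,0,1)$, to objects, and use the equivalence to identify $\Pic^0(Y)\rtimes\Aut^0(Y)$ with $\Pic^0(X)\rtimes\Aut^0(X)$ (Rouquier's isomorphism). The reduced group $\Aut^0(X)$ is an elliptic curve, while for a quasi-bielliptic surface $\Aut^0$ is not.

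\textbf{Step 2: reduce to a relative moduli space.} Compose $\Phi$ with standard autoequivalences and relative Fourier--Mukai transforms of $X$, using the surjection onto $\{A_1\otimes A_2\}$ from the argument of \cref{MT1}, to normalise the Mukai vector $v=\Phi^{-1}(\mathcal O_y)$. Up to shift, $v$ should become either $(0,0,1)$ or of the form $(0,rf_i,d)$ with $\gcd(r,d)=1$, where $f_i$ is the fibre class of one of the elliptic fibrations. The key input is that the numerical action of any equivalence $D(X)\to D(Y)$ preserves the isotropic fibre classes. I would try to prove this as in \cref{MT1}, by showing that $\Phi$ sends sheaves supported on fibres to complexes supported on fibres, using that $\omega$ is torsion and that the two fibrations are the only genus-one pencils. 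In the first case $\Phi^{-1}(\mathcal O_y)$ is a skyscraper for every $y$, so $\Phi$ is standard up to shift and $Y\simeq X$. In the second case $Y$ is a fine moduli space $J_{X/C_i}(r,d)$ of stable sheaves on the fibres of $X\to C_i$.

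\textbf{Step 3 (main obstacle): $J_{X/C_i}(r,d)\simeq X$.} For the Albanese fibration $X=(E\times F)/G\to E/G$, the relative Jacobian of degree $d$ is obtained from $\mathrm{Pic}^d(F)\simeq F$ with the induced $G$-action. Since $G$ acts on $F$ by translations (possibly composed with automorphisms), the induced action on $\mathrm{Pic}^d(F)$ differs from the original by translation by $d$ times something. One then checks that, because $\gcd(r,d)=1$, the quotient is isomorphic to $X$. The other fibration, where $G$ acts by translations on $E$, is handled similarly. The difficulty is doing this uniformly in characteristics $2,3$, where $G$ may be non-reduced ($\mu_p$ or $\alpha_p$). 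There I would work with the group schemes directly, using Cartier duality to identify the action on $\mathrm{Pic}^d$.
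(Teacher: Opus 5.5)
\paragraph{Gap 1: the rank cannot always be reduced to zero.}
Your Step 2 carries essentially the whole content of the theorem, but it is only asserted (``should become''), and the mechanism you propose for it fails. The group $\{A_1\otimes A_2 : A_i\in\Gamma(\lambda_i)\}$ does not act transitively on primitive isotropic vectors. To see this:
\begin{itemize}
\item Isotropy of $v=(r,s_1,s_2,t)$ means $rt=s_1s_2$, so $v=\pm u\otimes w$ with $u,w\in\mathbb{Z}^2$ primitive.
\item $A_1\otimes A_2$ acts by $u\mapsto A_1u$, $w\mapsto A_2w$, and the rank is $u_1w_1$.
\item The upper-left entry of a matrix in $\Gamma(\lambda)$ is a unit mod $\lambda$.
\end{itemize}
Hence the orbit of $v$ contains a rank-zero vector if and only if $\lambda_1\mid u_1$ or $\lambda_2\mid w_1$. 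For type $(2,2)$, for instance, every vector in the orbit of $(1,0,0,0)$ has odd rank. So before normalising you must show that $\Phi^{-1}(\mathcal{O}_y)$ cannot have such a Mukai vector.

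The key input you suggest for this is false: equivalences do not send fibre-supported sheaves to fibre-supported complexes. A relative Fourier--Mukai transform along $f_1$ followed by one along $f_2$ sends $\mathcal{O}_y$ to an object of rank $aa'\lambda_1\lambda_2\neq 0$, where $a,a'>0$ are the upper-right entries of $\Phi_{1,F_1},\Phi_{2,F_2}$ (\cref{matrix}). The paper's \cref{Prop:rank} supplies the missing input by a different route. Lifting the equivalence to canonical covers (\cref{LCC}) gives $\rk\Psi(\mathcal{O}_y)\in\ord(\omega_X)\mathbb{Z}$ (\cref{RIP}). Whenever $\lambda_1\neq 1$ one has $\ord(\omega_X)=\lambda_2$, except in type $(2,\mu_2)$. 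There $\omega_X$ is trivial, and one needs \cref{AVA} instead: a simple sheaf with $\ch=(1,0,0)$ is invertible, so the kernel would factor through $D$ of the elliptic curve $\underline{\Pic}^0(X)_{\mathrm{red}}$.

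\paragraph{Gap 2: two further inputs are assumed.}
First, after the rank is killed the class is $(0,(s/\lambda_i)F_i,t)$. You must exclude $s\notin\lambda_i\mathbb{Z}$, i.e.\ objects living on the multiple fibres of $f_2$. Your normal form $(0,rF_i,d)$ with $r$ integral presupposes this; the paper proves it in \cref{Prop:frac} by a support argument on the kernel. Second, a Mukai vector $(0,0,1)$ gives a skyscraper only once you know the equivalence is a sheaf transform (\cref{Prop:sheaf}). For trivial $\omega_X$ this rests on \cref{rigid}, which uses Koseki's Bogomolov inequality in positive characteristic.

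\paragraph{Steps 1 and 3 are not where the difficulty lies.}
The isomorphism $J_X(a,b)\simeq X$ in Step 3 is an input the paper quotes from Uehara--Watanabe. You already rely on it in Step 2 when you treat relative transforms as autoequivalences of $X$ and invoke \cref{ESC}. Once you have it, your second case $(0,rF_i,d)$ is moved to $(0,0,1)$ by one more relative transform, with coprimality given by \cref{COP}. So $Y$ never needs to be identified with a relative moduli space.

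Step 1 is likewise unnecessary. The paper never classifies $Y$; it only uses that $\ord(\omega_Y)=\ord(\omega_X)$ is prime to $\Char(k)$, for \cref{RIP}, and obtains $Y\simeq X$ from \cref{JOS}. Your sketch of Step 1 is also unjustified as written, since Rouquier's isomorphism need not carry $\Aut^0(X)$ onto $\Aut^0(Y)$.
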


%% file: sections/section1/notation.tex
\subsection{Notation and conventions}

All varieties are defined over an algebraically closed field $k$.  
Unless otherwise stated, varieties are smooth and projective. A point on a variety always means a closed point. 
The group $\Num(X)$ is the group of divisors on a variety $X$ modulo numerical equivalence. 
For a coherent sheaf $\mathcal{E}$, the sheaf $\Tor (\mathcal{E})$ is the torsion part of $\mathcal{E}$.
We denote by $D(X)$ the bounded derived category of coherent sheaves on an algebraic variety $X$.  Equivalences between these categories always mean $k$-linear triangulated equivalences.
For matrices $X = (x_{ij}) \in M_{m,n}(\mathbb{Z}), Y = (y_{ij})\in M_{p,q}(\mathbb{Z})$, we define the tensor product as
\[
    X \otimes Y \coloneqq 
    \begin{bmatrix}
        x_{11}Y & x_{12}Y & \dotso & x_{1n}Y \\
        \vdots & \vdots & \ddots & \vdots \\
        x_{m1}Y & x_{m2}Y & \dotso & x_{mn}Y \\
    \end{bmatrix}.
\]
Note that the equality
\[
    (A \otimes C) (B \otimes D) = AB \otimes CD
\]
holds for matrices
$A \in M_{m,n}(\mathbb{Z}), 
B \in M_{n,l}(\mathbb{Z}),
C \in M_{p,q}(\mathbb{Z})$ and 
$D \in M_{q,r}(\mathbb{Z}).$

\paragraph{Acknowledgements.}
The author is grateful to Hokuto Uehara for valuable guidance and support.
Thanks are also due to Yuta Takashima for helpful comments on the manuscript.

%% file: sections/section2/section2_main.tex
\section{Preliminaries}

\input{sections/section2/bielliptic}

\input{sections/section2/Euler}
\input{sections/section2/matrix}
\input{sections/section2/canonical_cover}

\input{sections/section2/relative_FM}

%% file: sections/section2/bielliptic.tex
\subsection{Bielliptic surfaces} \label{bie}

\begin{definition}
    A minimal projective surface $X$ is called a bielliptic surface if $K_{X} \equiv 0, b_{2}=2$ and all fibers of the Albanese map are smooth elliptic curves.
\end{definition}
A classification of bielliptic surfaces is known (\cite[Theorem 4]{MR0491719}). 
A bielliptic surface $X$ is obtained as quotient of the product of two elliptic curves $A$ and $B$ by a finite subgroup $G$ of $A$, where $G$ acts on $A$ by translations and on $B$ by automorphisms.
Since $\Aut(B) = B \rtimes \Aut_{0}(B)$, we can write $G = H \times G_{0}$, where $G_{0}$ is a finite subgroup of $\Aut_{0}(B)$ and $H$ is a finite subgroup of $B$. 
The bielliptic surface $X = (A \times B) /G$ has two natural elliptic fibrations
\[
    f_{1} \colon ( A \times B ) / G \to A /G, 
    \hspace{10pt}
    f_{2} \colon ( A \times B ) / G \to B /G.
\]
Here, $f_{1}$ is the Albanese map and hence is smooth.
Let $F_{i} \in \Num (X)$ be the numerical class of a smooth fiber of $f_{i}$ for $i \in \{ 1,2\}$.

\begin{lem} \label{ROT}
    We have $F_{1} \cdot F_{2} = \rank (G)$.
    Here, $\rank (G)$ is defined as $\dim_{k}R$, where $G = \Spec R$.
\end{lem}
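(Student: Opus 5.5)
We compute the intersection number on the cover and descend along the quotient map. Let $\pi \colon A \times B \to X = (A \times B)/G$ be the quotient morphism. It is finite and flat of degree $\rank(G)$: $G$ acts freely, so $\pi$ is a $G$-torsor. This also covers the non-reduced group schemes that occur in characteristic $2,3$. The fibrations $f_1, f_2$ are induced by the two projections $p_A \colon A \times B \to A$ and $p_B \colon A \times B \to B$, together with the quotient maps $A \to A/G$ and $B \to B/G$.

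First, identify the pullbacks of the fibers. For a general point $\bar a \in A/G$, the fiber $f_1^{-1}(\bar a)$ pulls back under $\pi$ to $p_A^{-1}(q_A^{-1}(\bar a)) = q_A^{-1}(\bar a) \times B$. Here $q_A \colon A \to A/G$ is the quotient by translations; it is a $G$-torsor of degree $\rank(G)$. So the scheme-theoretic preimage has class $\rank(G)\,[\{a\}\times B]$ in $\Num(A\times B)$. For the second fibration, $q_B \colon B \to B/G$ has degree $\rank(G)$ as well, since $G$ acts faithfully on $B$. Hence for a general $\bar b$, $\pi^{*}F_2 = \rank(G)\,[A \times \{b\}]$. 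In each case I would justify the scheme-theoretic statement by the fiber-product diagram $\pi^{-1}(f_i^{-1}(\bar x)) = (A\times B)\times_{A/G\text{ or }B/G} \bar x$, which factors through the projection. One can also bypass degree counting by writing $\pi^*F_1 = p_A^* q_A^* [\bar a]$ and using $\deg q_A^*[\bar a] = \deg q_A$.

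Second, apply the projection formula for the finite flat morphism $\pi$:
\[
\pi^*F_1 \cdot \pi^*F_2 = \deg(\pi)\,(F_1\cdot F_2).
\]
The left side equals $\rank(G)^2\,([\{a\}\times B]\cdot[A\times\{b\}]) = \rank(G)^2$. Dividing by $\deg \pi = \rank(G)$ gives $F_1\cdot F_2 = \rank(G)$.

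The main obstacle is making the degree statements rigorous when $G$ is a non-reduced group scheme, i.e.\ showing $\deg q_A = \deg q_B = \deg\pi = \rank(G)$. For a free action of a finite group scheme $G$ on a quasi-projective $Y$, the quotient $Y \to Y/G$ is a $G$-torsor, since $Y\times G \simeq Y\times_{Y/G} Y$. It is therefore finite flat of degree $\dim_k R$. Freeness of the $G$-actions on $A$ (by translations), on $A\times B$, and on $B$ should be checked via the embedding $G \hookrightarrow A$. On $B$ alone the action is not free, but the fiber over a general point still has length $\rank(G)$ because $q_B$ is finite flat of degree $\rank(G)$ ($B$ and $B/G$ are smooth curves). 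Alternatively, $\deg q_B = \deg \pi / \deg(A \to A/G)\cdot \rank(G)$ follows from the commutative square relating $\pi$, $q_A$ and the projections, by comparing degrees on generic fibers.
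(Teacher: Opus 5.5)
Your proposal is correct and is essentially the paper's argument. Both pull $F_1,F_2$ back along the degree-$\rank(G)$ torsor $\pi$, identify each $\pi^{*}F_i$ with $\rank(G)$ times a fiber class of the corresponding projection, and divide $\pi^{*}F_1\cdot\pi^{*}F_2=\rank(G)^2$ by $\deg\pi$ using the projection formula. One point needs tightening: $\deg q_B=\rank(G)$, which the paper only covers with ``similarly''. Faithfulness alone does not force this for non-reduced $G$, but here the action on $B$ is generically free: $H$ acts by translations, and the constant group $G_0$ acts faithfully on $B/H$. Your ``alternatively'' formula for $\deg q_B$ is garbled and can be dropped.
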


\begin{proof}
    Consider the following commutative diagram:
    \[
        \begin{tikzcd}
            A \arrow[d, "\pi_1"'] 
            & A \times B \arrow[l, "p_1"'] \arrow[r, "p_2"] \arrow[d, "\pi"]
            & B \arrow[d, "\pi_2"] \\[1.2em]
            A/G 
            & (A \times B)/G \arrow[l, "f_1"] \arrow[r, "f_2"']
            & B/G
        \end{tikzcd}
    \]

    Let $P_{1} \in A, P_{2} \in B$ be points and $Q_{1} \coloneqq \pi_{1}(P_{1}), Q_{2} \coloneqq \pi_{2}(P_{2})$.
    Then, 
    \begin{align*}
        \rank(G) F_{1} \cdot F_{2}
        &= \rank(G) f_{1}^{\ast}(Q_{1}) \cdot f_{2}^{\ast}(Q_{2}) \\
        &= \deg(\pi) f_{1}^{\ast}(Q_{1}) \cdot f_{2}^{\ast}(Q_{2}) \tag*{(\cite[Theorem 8.1]{Pink-FiniteGroupSchemes})} \\ 
        &= \pi^{\ast} f_{1}^{\ast}(Q_{1}) \cdot \pi^{\ast} f_{2}^{\ast}(Q_{2}) \tag*{(projection formula)} \\
        &= p_{1}^{\ast} \pi_{1}^{\ast}(Q_{1}) \cdot p_{2}^{\ast} \pi_{2}^{\ast}(Q_{2}) .
    \end{align*}
    For $i \in \{1,2\}$, let $V_{i} \coloneqq p_{i}^{\ast}(P_{i}) \in \Num (A \times B)$ denote the numerical class of the fiber of the point $P_{i} \in A$. 
    Then
    \[
        p_{1}^{\ast} \pi_{1}^{\ast}(Q_{1}) 
        \equiv 
        \deg (\pi) V_{1}
        =
        \rank (G) V_{1}.
    \]
    Similarly, we can write
    \[
        p_{2}^{\ast} \pi_{2}^{\ast}(Q_{2}) 
        \equiv \rank (G) V_{2}.
    \]
    Since $V_{1} \cdot V_{2} =1$, it follows that
    \begin{eqnarray*}
        \rank (G) F_{1} \cdot F_{2}
        &=&
        p_{1}^{\ast} \pi_{1}^{\ast}(Q_{1}) \cdot p_{2}^{\ast} \pi_{2}^{\ast}(Q_{2}) \\
        &=&
        \rank (G)^{2} V_{1} \cdot V_{2} \\
        &=& \rank (G)^{2}. 
    \end{eqnarray*}
    Therefore,  $F_{1} \cdot F_{2} = \rank (G)$.
\end{proof}

For $i \in \{ 1,2 \}$, put
\[
    \lambda_{i} \coloneqq 
    \min \{ F_{i} \cdot D \mid D \mbox{ is the numerical class of an effective divisor on } X \}.
\]
Since $f_{1}$ is the Albanese map, it is smooth.
Put 
\[
    \mu \coloneqq \lcm \{ m_{i} | m_{i} \mbox{ is the multiplicity of a multiple fiber of } f_{2} \}.
\] 
Note that
\begin{equation}\label{F2-mu}
    \frac{1}{\mu} F_{2} \in \Num(X).
\end{equation}

Then, by the proof of \cref{NUM}, we can calculate
\[
    \lambda_{1} = \frac{F_{1} \cdot F_{2}}{\mu}, 
    \hspace{5pt}
    \lambda_{2} = \mu.
\]
Combining \cref{ROT} and \cite{MR0491719}, 
we can classify bielliptic surfaces as in \cref{table}.
In \cref{table}, “multiplicities” denotes the multiplicities of all multiple fibers of $f_{2}$.
For instance, consider the case where $X$ is a bielliptic surface of type (2,1). Then we have $\mu =2$ and 
\[
    \lambda_{1} = \frac{F_{1} \cdot F_{2}}{\mu} = \frac{2}{2}=1, 
    \hspace{5pt}
    \lambda_{2} = \mu=2.
\]

\begin{table}[htbp] 
    \centering
    \small
    \setlength{\tabcolsep}{4pt}
    \renewcommand{\arraystretch}{0.9}
    \begin{tabular}{| c | c | c | c | c | c | c | c | c | c |} 
        \hline
        \textbf{Type} & \( G_0 \) & \( H \) & \(\operatorname{char}(k)\) & \(\operatorname{ord}(\omega_X)\) & \( \lambda_1 \) & \( \lambda_2 \)  &  \( F_{1} \cdot F_{2} \)  & \textbf{multiplicities} & \(\mu\) \\
        \hline
        (2,1) & \( \mathbb{Z}/2\mathbb{Z} \) & \( e \) & \( \neq 2 \) & 2 & 1 & 2 & 2 & \{2,2,2,2\} & 2 \\
        (2,1) & \( \mathbb{Z}/2\mathbb{Z} \) & \( e \) & \( = 2 \) & 1 & 1 & 2 & 2 & \{2\} or \{2,2\} & 2 \\
        (2,2) & \( \mathbb{Z}/2\mathbb{Z} \) & \( \mathbb{Z}/2\mathbb{Z} \) & \( \neq 2 \) & 2 & 2 & 2 & 4 & \{2,2,2,2\} & 2\\ 
        (2,$\mu_2$) & $\mathbb{Z}/2\mathbb{Z}$ & $\mu_2$ & $=2$ & 1 & 2 & 2  & 4 & \{2\} or \{2,2\} & 2 \\
        \hline
        (3,1) & \( \mathbb{Z}/3\mathbb{Z} \) & \( e \) & \( \neq 3 \) & 3 & 1 & 3 & 3  & \{3,3,3\} & 3 \\
        (3,1) & \( \mathbb{Z}/3\mathbb{Z} \) & \( e \) & \( = 3 \) & 1 & 1 & 3 & 3 & \{3\} & 3 \\
        (3,3) & \( \mathbb{Z}/3\mathbb{Z} \) & \( \mathbb{Z}/3\mathbb{Z} \) & \( \neq 3 \) & 3 & 3 & 3  & 9 & \{3,3,3\} & 3 \\ 
        \hline
        (4,1) & \( \mathbb{Z}/4\mathbb{Z} \) & \( e \) & \( \neq 2 \) & 4 & 1 & 4 & 4  & \{2,4,4\} & 4\\
        (4,1) & \( \mathbb{Z}/4\mathbb{Z} \) & \( e \) & \( =2 \) & 1 & 1 & 4 & 4  & \{4\} & 4\\
        (4,2) & \( \mathbb{Z}/4\mathbb{Z} \) & \( \mathbb{Z}/2\mathbb{Z} \) & \( \neq2 \) & 4 & 2 & 4  & 8 & \{2,4,4\} & 4\\ 
        \hline
        (6,1) & \( \mathbb{Z}/6\mathbb{Z} \) & \( e \) & \( \neq 2,3 \) & 6 & 1 & 6 & 6  & \{2,3,6\} & 6\\
        (6,1) & \( \mathbb{Z}/6\mathbb{Z} \) & \( e \) & \( = 2 \) & 3 & 1 & 6 & 6 &\{6,3\} & 6\\
        (6,1) & \( \mathbb{Z}/6\mathbb{Z} \) & \( e \) & \( = 3 \) & 2 & 1 & 6  & 6 & \{6,2\} & 6\\
        \hline
    \end{tabular}
    \caption{Types of bielliptic surfaces.} \label{table}
\end{table}

Following Potter’s terminology, a bielliptic surface of type $(s,t)$ is said to be \textit{cyclic} when $t=1$ and \textit{non-cyclic} when $t \neq1$.

\begin{rmk}
    \cref{NUM_lem} and \cref{NUM} have been proved over the complex number field (see \cite[Lemma 1.5, Theorem 1.4]{{MR1038716}}) and a similar argument applies over any algebraically closed field.
\end{rmk}

\begin{lem} \label{NUM_lem}
    Let $X$ be a bielliptic surface
    and $\alpha \in \mathbb{Z}_{\ge 1}$. If $(1/\alpha)F_{2} \in \Num (X)$, then we have $\mu \ge \alpha$.
\end{lem}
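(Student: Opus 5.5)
We would prove \cref{NUM_lem} by producing, from a class $D \in \Num(X)$ with $\alpha D \equiv F_{2}$, an actual effective divisor supported on fibers of $f_{2}$, and then reading off its multiplicity structure. Write $F_{2} = \sum_{j} m_{j} E_{j}$ for the multiple fibers, where $F_{2} \equiv m_{j}E_{j}$ and $E_{j}$ is the reduced fiber. The key observation is that $\frac{1}{\mu}F_{2} \in \Num(X)$ is known by (\ref{F2-mu}). So it suffices to show that the subgroup $\{ t \in \mathbb{Q} \mid tF_{2} \in \Num(X)\}$ equals $\frac{1}{\mu}\mathbb{Z}$. Then $\frac{1}{\alpha} \in \frac{1}{\mu}\mathbb{Z}$ gives $\alpha \mid \mu$, hence $\mu \ge \alpha$.

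\textbf{Step 1 (reduction to an effective class).} Suppose $D \in \Num(X)$ with $\alpha D \equiv F_{2}$. Then $D^{2}=0$ and $D\cdot F_{2}=0$. Since $K_{X}\equiv 0$ and $\chi(\mathcal{O}_{X})=0$, Riemann--Roch gives $\chi(\mathcal{O}_{X}(L)) = 0$ for any line bundle $L$ representing $D$. This alone does not yield sections, so instead we twist by numerically trivial bundles. The class $D$ is determined only up to $\Pic^{\tau}$, and we want some representative that is effective. We use the canonical bundle formula: $\omega_{X} \simeq f_{2}^{\ast}(\omega_{B/G}\otimes \mathbb{L}^{-1}) \otimes \mathcal{O}(\sum (m_{j}-1)E_{j})$, together with the structure of $\Num(X)$ as a rank-two lattice.

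\textbf{Step 2 (computing via intersection with $F_{1}$).} A cleaner route avoids sections entirely. Since $\Num(X)$ has rank $2$ and $F_{2}$ is primitive-up-to-$\mu$, we compare intersection numbers with $F_{1}$. We have $F_{1}\cdot E_{j} = F_{1}\cdot F_{2}/m_{j}$, and $F_{1}\cdot F_{2} = \rank(G)$ by \cref{ROT}. For any $D\in\Num(X)$, $D\cdot F_{1}$ is an integer, and $D\cdot F_{1}$ equals the degree of $D|_{F_{1}}$. Restricting to a fiber $F_{1}\simeq B$, the map $F_{1}\to B/G$ has degree $\rank(G)$ and is the quotient by $G$. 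So $D\cdot F_1$ alone is too weak, and we must instead use the group structure. We relate $\frac{1}{\alpha}F_{2}$ to a divisor on $X$ pulled back from the orbifold curve $B/G$, i.e.\ an element of the orbifold Picard group generated by $\pi_{2}^{\ast}$-points and the $E_{j}$. The plan is to show that every class proportional to $F_{2}$ in $\Num(X)$ is represented by $\sum a_{j}E_{j} + f_{2}^{\ast}(\text{divisor})$. For this, pull back to $A\times B$: the class becomes a $G$-invariant class on $A\times B$ proportional to $V_{2}$, and on the product it is $cV_{2}$ with $c\in\mathbb{Z}$. Hence $\pi^{\ast}(tF_{2}) = t\rank(G)V_{2}$, which forces $t\rank(G)\in\mathbb{Z}$.

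\textbf{Step 3 (main obstacle).} The bound $t\in\frac{1}{\rank G}\mathbb{Z}$ is weaker than $\frac{1}{\mu}\mathbb{Z}$, since $\rank G = \lambda_{1}\mu$. The hard part is eliminating the extra factor $\lambda_{1}=|H|$. We would do this by descending $G$-equivariant line bundles in stages: first take the quotient by $H$ (translations on both factors), giving $A/H\times B/H$, which is again a product of elliptic curves. There the pulled-back fiber class is still an integral multiple of the fiber class, so the $H$-part contributes no fractional multiples. Then we take the quotient by $G_{0}$, whose fixed points on $B/H$ produce exactly the multiple fibers of multiplicities $m_{j}$. The stabilizer orders at these fixed points bound the denominators: a $G_{0}$-equivariant structure on $\mathcal{O}(cV_{2})$ can descend with fractional part only through the characters of the stabilizers, which have orders $m_{j}$. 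This gives $t\in\frac{1}{\lcm(m_{j})}\mathbb{Z}=\frac{1}{\mu}\mathbb{Z}$. In characteristics $2$ and $3$, where $H$ may be $\mu_{2}$ or the action is wild, we would check the descent step by working with the multiplicities listed in \cref{table} rather than with stabilizer characters.
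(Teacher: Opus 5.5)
Your proposal has a genuine gap: Steps 1 and 2 give only the trivial bound, and Step 3, which carries the whole lemma, rests on a false identification and an unproved descent heuristic.

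\textbf{Steps 1 and 2.} These only give $t\rank(G)\in\mathbb{Z}$. That is the same information as $tF_{2}\cdot F_{1}\in\mathbb{Z}$, i.e.\ $\alpha\mid F_{1}\cdot F_{2}$, which is also the paper's first reduction.

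\textbf{Step 3.} The quotient of $A\times B$ by the diagonally acting $H$ is not $A/H\times B/H$: the natural map $(A\times B)/H\to A/H\times B/H$ has degree $|H|$. The intermediate surface is an abelian surface $Y$ whose fibration $Y\to B/H$ has fibres isomorphic to $A$, and its two fibre classes satisfy $F_{1}'\cdot F_{2}'=|H|$. So on $Y$ the same test again gives only $t\,|G_{0}|\,|H|\in\mathbb{Z}$.

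Your sentence that ``the $H$-part contributes no fractional multiples'' is exactly the claim that $F_{2}'$ is primitive in $\Num(Y)$, even though no evident class meets it once. In your product picture this was automatic only because of the false identification.

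The second half of Step 3, bounding denominators by stabilizer characters of order $m_{j}$, is a heuristic rather than an argument. It also fails precisely in the cases you postpone. For type $(2,1)$ in characteristic $2$ the stabilizers $\mathbb{Z}/2\mathbb{Z}$ act wildly and $\Hom(\mathbb{Z}/2\mathbb{Z},k^{\ast})$ is trivial. For type $(2,\mu_{2})$ the group $H$ is not reduced.

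\textbf{The idea the paper uses.} It is the effectivity idea you dropped in Step 1, applied to a better class.
\begin{enumerate}
\item Using the table, reduce to $\alpha=F_{1}\cdot F_{2}$, the only divisor of $F_{1}\cdot F_{2}$ exceeding $\mu$.
\item Take $D\equiv F_{1}+\frac{1}{F_{1}\cdot F_{2}}F_{2}$. Now $D^{2}=2$, so $\chi(\mathcal{O}(D))=1$ and $D$ is effective.
\item An irreducible $D$ with $D\cdot F_{1}=1$ would be a curve of genus $g(A/G)=1$. This contradicts adjunction, $2g(D)-2=D^{2}=2$. Hence $D$ contains a fibre of $f_{1}$.
\item Then $D-F_{1}$ is an effective divisor numerically equal to $\frac{1}{F_{1}\cdot F_{2}}F_{2}$. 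It must be the reduction of a single fibre of $f_{2}$ meeting $F_{1}$ once. That fibre has multiplicity $F_{1}\cdot F_{2}$, forcing $F_{1}\cdot F_{2}\le\mu$.
\end{enumerate}

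\textbf{How to rescue your descent strategy.} You would need to prove that the class of the elliptic curve $C$ representing $F_{2}'$ is primitive in $\operatorname{NS}(Y)$. One way: if $C\equiv nM$, then $\phi_{\mathcal{O}(C)}=n\phi_{M}$. Its kernel contains $\ker\phi_{M}$ with quotient $\operatorname{im}(\phi_{M})[n]$ of order $n^{2}$. But $\ker\phi_{\mathcal{O}(C)}=C$ is an elliptic curve, so its one-dimensional subgroup scheme $\ker\phi_{M}$ is all of $C$, and $n=1$.

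Pulling back along $Y\to X$, which sends $F_{2}$ to $|G_{0}|F_{2}'$, then gives $t\in\frac{1}{|G_{0}|}\mathbb{Z}=\frac{1}{\mu}\mathbb{Z}$ in every characteristic. No stabilizer analysis is needed.
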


\begin{proof}
    It suffices to show that if 
    \[
        \frac{1}{F_{1} \cdot F_{2}}F_{2} \in \Num (X),
    \]
    then $F_{1} \cdot F_{2} = \mu$.
    Indeed, for any $\alpha \in \mathbb{Z}_{\ge 1}$ with $(1/\alpha)F_{2} \in \Num (X)$, we have $F_{1} \cdot F_{2}/ \alpha \in \mathbb{Z}$. Moreover, according to \cref{table}, if $\alpha > \mu$, then $\alpha = F_{1} \cdot F_{2}$, hence the statement follows.
    
    Assume that 
    \[
        \frac{1}{F_{1} \cdot F_{2}}F_{2} \in \Num (X).
    \]
    Take $D \in \Div (X)$ such that 
    \[
        D \equiv F_{1} + \frac{1}{F_{1} \cdot F_{2}} F_{2}. 
    \]

    \textit{Step 1.}
    We prove that $D$ is effective.  
    Since $\chi (\mathcal{O}(D)) = (1/2) D^{2} = 1 >0$, we have $h^{0} (\mathcal{O}(D)) >0$ or $h^{2} (\mathcal{O}(D)) >0$.
    We have $h^{2} (\mathcal{O}(D)) = h^{0} (\mathcal{O}(-D + K)) =0$,  
    hence $h^{0} (\mathcal{O}(D)) >0$.

    \textit{Step 2.}
    We prove that $D$ is reducible. 
    Assume that $D$ is irreducible.
    First, we note that $D$ is normal.
    Indeed, assume $D$ is not normal and let $p$ be a singular point of $D$. Put $F_{p} \coloneqq f_{1}^{-1}(f_{1}(p))$. Then we obtain
    \[
        1 = D \cdot F_{1} \ge (D \cdot F_{1})_{p} \ge \mu_{p}(D) \ge 2,
    \]
    which is a contradiction.
    We express $\varphi \coloneqq f_{1}|_{D} \colon D \to A / G$ as the composition of 
    purely inseparable morphism $\varphi_{\ins} \colon D \to D_{\sep}$ 
    and 
    separable morphism $\varphi_{\sep} \colon D_{\sep} \to A/G$.  
    As $D \cdot F_{1} =1$, we have $|\varphi_{\sep} ^{-1} (p) | =1$ for any $p \in A/G$, hence $\deg (\varphi_{\sep}) =1$. It follows that $\varphi_{\sep}$ is an isomorphism. 
    On the other hand, because 
    $\varphi_{\ins}$ is purely inseparable, 
    we have $g(D) = g(A/G) =1$. By the adjunction formula, we get
    \[
        0 = 2 g(D) -2 = D \cdot (D +K ) = D^{2} =2,
    \]
    which is a contradiction.

    \textit{Step 3.}
    We prove that there exists an effective divisor $D^{\prime} \in \Div (X)$ such that 
    \[
        D^{\prime} \equiv \frac{1}{F_{1} \cdot F_{2}} F_{2}. 
    \]
    As $D$ is effective, for $D = \sum_{i=1}^t m_{i}D_{i}$ where $m_{i} >0$ and each $D_{i}$ is a prime divisor, it suffices to show that there exists $i$ such that $D_{i} = F_{1}$, 
    because $D - D_{i}$ is an effective divisor.
    Assume $D_{i} \ne F_{1}$ for all $i$. As $D_{i} \cdot F_{1} > 0$ and $D \cdot F_{1} =1$, we obtain $t=1$. However, this contradicts the reducibility of $D$.

    \textit{Step 4.}
    We prove that $F_{1} \cdot F_{2} = \mu$. Define $D^{\prime}$ as above. Let $D^{\prime} = \sum_{i=1}^{t'} a_{i}D_{i}^{\prime}$ be defined to satisfy $a_{i} > 0$ and each $D_{i}^{\prime}$ is a prime divisor. It follows that
    \[
        0 
        = F_{2} \cdot \expar{ \frac{1}{F_{1} \cdot F_{2}} F_{2}}
        = F_{2} \cdot D^{\prime}
        = \sum_{i=1}^{t'} a_{i} ( F_{2} \cdot D_{i}^{\prime}).
    \]
    Therefore, for any $i$, we obtain $F_{2} \cdot D_{i}^{\prime} =0$ and there exists a point $Q_{i} \in B/G$ such that $D_{i}^{\prime} = f_{2}^{-1}(Q_{i})$, where $f_{2}^{-1}(Q_{i})$ is reduced. On the other hand, it holds that
    \[
        1 
        = F_{1} \cdot \expar{ \frac{1}{F_{1} \cdot F_{2}}F_{2}}
        = F_{1} \cdot D^{\prime}  
        = \sum_{i=1}^{t'} a_{i} (F_{1} \cdot f_{2}^{-1}(Q_{i})).
    \]
    Thus we obtain $t^{\prime}=1, a_{1}=1$ and $F_{1} \cdot f_{2}^{-1}(Q_{1}) = 1$. Then $f_{2}^{\ast}(Q_{1}) = (F_{1} \cdot F_{2}) f_{2}^{-1}(Q_{1})$, and hence $f_{2}$ has a multiple fiber of multiplicity $F_{1} \cdot F_{2}$. Consequently, by \cref{table}, we conclude that $F_{1} \cdot F_{2} = \mu$.
\end{proof}

\begin{thm} \label{NUM}
    Let $X$ be a bielliptic surface.
    Then 
    \[
        \Num (X) = \left( \frac{1}{\lambda_{1}}F_{1} \right) \mathbb{Z} \oplus 
        \left( \frac{1}{\lambda_{2}}F_{2} \right) \mathbb{Z}.
    \]
\end{thm}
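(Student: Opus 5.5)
We need to show $\Num(X)=\frac{1}{\lambda_1}F_1\mathbb{Z}\oplus\frac{1}{\lambda_2}F_2\mathbb{Z}$ with $\lambda_1=F_1\cdot F_2/\mu$ and $\lambda_2=\mu$. The plan is to bound $\Num(X)$ on both sides, using the two facts that $\Num(X)$ is unimodular and of rank $b_2=2$.

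\emph{Step 1: the candidate lattice lies in $\Num(X)$.} Set $n\coloneqq F_1\cdot F_2=\rank(G)$ by \cref{ROT}. Since $F_1^2=F_2^2=0$ and $F_1\cdot F_2=n\neq 0$, the classes $F_1,F_2$ span a rank-$2$ sublattice of $\Num(X)\otimes\mathbb{Q}$. We have $\frac{1}{\mu}F_2\in\Num(X)$ by \eqref{F2-mu}. For $F_1$ we need an actual divisor: I would take a multiple fiber $f_2^{-1}(Q)$ of multiplicity $\mu$, or more generally use the structure of $X=(A\times B)/G$. Concretely, the image in $X$ of $\{a\}\times B$ is a fiber of $f_1$. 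The image of $A\times\{b\}$, for $b$ a point whose stabilizer in $G$ is as large as possible, is a reduced curve $C$ with $C\cdot F_2=0$ and $C\equiv F_2/m$. Dually, I would look for a curve $E$ with $E\equiv \frac{\mu}{n}F_1$, i.e. $E\cdot F_2=\mu$. A cleaner route is to avoid constructing $E$: let $L\subset\Num(X)$ be the saturation, i.e. the elements $xF_1+yF_2$ with $x,y\in\mathbb{Q}$.

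\emph{Step 2: use unimodularity.} Poincaré duality (valid for $\Num$ of a surface, $\ell$-adically in any characteristic) makes $\Num(X)$ a unimodular lattice of rank $2$. Since $F_1^2=0$ and $K\equiv0$, Riemann--Roch gives $D^2=2\chi(\mathcal{O}(D))$, so the lattice is even; hence $\Num(X)\cong U$, the hyperbolic plane. Write $\Num(X)=\mathbb{Z}e\oplus\mathbb{Z}f$ with $e^2=f^2=0$ and $e\cdot f=1$. The only isotropic primitive directions are $\pm e,\pm f$, so $F_1=\alpha e$ and $F_2=\beta f$ (after reordering and fixing signs using nefness) with $\alpha,\beta\ge1$ and $\alpha\beta=n$. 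Thus $\Num(X)=\frac{1}{\alpha}F_1\mathbb{Z}\oplus\frac1\beta F_2\mathbb{Z}$.

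\emph{Step 3: identify $\alpha,\beta$ with $\lambda_1,\lambda_2$.} Since $\frac1\beta F_2\in\Num(X)$, \cref{NUM_lem} gives $\beta\le\mu$. Conversely, $\frac1\mu F_2\in\Num(X)$ gives $\mu\mid\beta$, because $\frac1\beta F_2$ is primitive. Hence $\beta=\mu$ and $\alpha=n/\mu$. To match the definition of $\lambda_i$ as minima of $F_i\cdot D$ over effective $D$, I check both directions:
\begin{itemize}
\item $F_1\cdot D\in\frac{n}{\beta}\mathbb{Z}=\alpha\mathbb{Z}$ and $F_2\cdot D\in\beta\mathbb{Z}$ for every class $D$.
\item The minima are attained: the reduced multiple fiber of multiplicity $\mu$ is an effective class $\frac1\mu F_2$, with $F_1\cdot\frac1\mu F_2=\alpha$. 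For $\lambda_2$, some effective class in $\frac1\alpha F_1+m\frac1\beta F_2$, $m\gg0$, exists by Riemann--Roch as in Step 1 of \cref{NUM_lem}, and it gives $F_2\cdot D=\beta$.
\end{itemize}
So $\lambda_1=\alpha$ and $\lambda_2=\beta$.

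The main obstacle is Step 2: justifying that $\Num(X)$ is unimodular and even in arbitrary characteristic (via the intersection pairing on $\mathrm{NS}/\mathrm{tors}$ and $\ell$-adic Poincaré duality with $b_2=\rho=2$). Once that is in place, the rest reduces to \cref{NUM_lem} and \eqref{F2-mu}.
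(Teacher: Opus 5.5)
Your Step 2 is not justified as written. Using the Kummer sequence and $\rho=b_2=2$, $\ell$-adic Poincar\'e duality shows only that $\Num(X)\otimes\mathbb{Z}_\ell$ is unimodular for primes $\ell\neq p=\Char(k)$. In other words, the discriminant of $\Num(X)$ is $\pm$ a power of $p$. This says nothing at $\ell=p$, and $\rho=b_2$ does not force unimodularity there: supersingular K3 surfaces have $\rho=b_2=22$ but N\'eron--Severi discriminant $-p^{2\sigma_0}$.

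The sublattice $\mathbb{Z}F_1\oplus\mathbb{Z}F_2$ has discriminant $-(F_1\cdot F_2)^2$, so your argument gives $\Num(X)\cong U$ only when $p\nmid F_1\cdot F_2$. That leaves out types $(2,1)$, $(4,1)$, $(6,1)$, $(2,\mu_2)$ in characteristic $2$ and $(3,1)$, $(6,1)$ in characteristic $3$. The rest of your proposal is fine: evenness from Riemann--Roch, the fact that an even unimodular lattice of rank $2$ is $U$, and Step 3, which checks more carefully than the paper that the $\lambda_i$ are attained by effective classes. Step 1 is not needed.

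The paper's route avoids duality in most of these cases. When $F_1\cdot F_2=\mu$ (every type listed except $(2,\mu_2)$), set $L=\mathbb{Z}F_1\oplus\mathbb{Z}\frac{1}{\mu}F_2$. Then $L\subseteq\Num(X)$ by \eqref{F2-mu}, and $F_1\cdot\frac{1}{\mu}F_2=1$, so $L$ is unimodular. Taking duals in $\Num(X)\otimes\mathbb{Q}$ gives $L\subseteq\Num(X)\subseteq\Num(X)^{\vee}\subseteq L^{\vee}=L$; this is the content of the paper's Case 1.

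In type $(2,\mu_2)$, where $p=2$, $F_1\cdot F_2=4$ and $\mu=2$, this does not work. Evenness, \cref{NUM_lem} and odd-$\ell$ duality leave two candidates, $F_1\mathbb{Z}\oplus\frac12F_2\mathbb{Z}$ and $\frac12F_1\mathbb{Z}\oplus\frac12F_2\mathbb{Z}$. You need an additional input that produces a class numerically equal to $\frac12F_1$. One way is to work on the abelian surface $(A\times B)/\mu_2$: twist a fiber of its projection to $A/\mu_2$ by a suitable element of $\Pic^{0}$ so that it becomes invariant under the residual free $\mathbb{Z}/2$-action, then descend it to $X$.

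The paper's Case 2 step ``since the intersection pairing is non-degenerate, $\frac{\mu}{F_1\cdot F_2}F_1\in\Num(X)$'' is this same self-duality in disguise. It is covered by $\ell$-adic duality for types $(2,2)$, $(3,3)$, $(4,2)$, where $p\nmid F_1\cdot F_2$. In type $(2,\mu_2)$ it needs the same supplement.
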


\begin{proof}
    It is enough to prove that $\Num (X)$ is generated by $\frac{\mu}{F_{1} \cdot F_{2}} F_{1}$ and $\frac{1}{\mu} F_{2}$, because we can compute $\lambda_{1}, \lambda_{2}$ as follows:
    \[
        \lambda_{1} = \expar{\frac{1}{\mu}F_{2}} \cdot F_{1} = \frac{F_{1} \cdot F_{2}}{\mu},
    \]
    \[
        \lambda_{2} = \expar{\frac{\mu}{F_{1} \cdot F_{2}}F_{1}} \cdot F_{2} = \mu.
    \]
    
    Since $\dim (\Num (X) \otimes \mathbb{Q})=2$ by $b_{2}(X)=2$, for any $W \in \Num (X)$, there exist integers $n_{1}, n_{2}, s_{1}, s_{2}, \ \\ t_{1}, t_{2} \in \mathbb{Z}$ such that
    \[
        W \equiv \expar{n_{1} + \frac{t_{1}}{s_{1}} } F_{1} + \expar{n_{2} +  \frac{t_{2}}{s_{2}} } F_{2}
    \]
    with $t_{i}/s_{i}$ irreducible fractions. 
    Note that $\sum_{i=1}^2 (n_{i}+ (t_{i} / s_{i})) F_{i} \in \Num (X)$ is equivalent to $\sum_{i=1}^2 (t_{i} / s_{i}) F_{i} \in \Num (X)$.

    \textit{Case 1.}
    We prove the statement in the case of $F_{1} \cdot F_{2} = \mu$.
    Take any $D = \sum_{i=1}^2 (t_{i} / s_{i}) F_{i} \in \Num (X) \otimes \mathbb{Q}$ with $t_{i}/s_{i}$ irreducible fractions. Assume $D \in \Num (X)$.
    It follows that
    \[
        \mathbb{Z} \ni D \cdot F_{1} = \frac{t_{2}}{s_{2}} F_{2} \cdot F_{1} = \frac{t_{2} \mu}{s_{2}}.
    \]
    Thus $s_{2}$ divides $\mu$, and we can write 
    $D = (t_{1} / s_{1}) F_{1} + (t_{2}^{\prime} / \mu) F_{2}$ with some $t_{2}^{\prime} \in \mathbb{Z}$.
    It follows from \cref{F2-mu}, $D \in \Num (X)$ if and only if $(t_{1}/ s_{1}) F_{1} + (1/ \mu ) F_{2} \in \Num (X)$. 
    If this holds, it follows that
    \[
        \mathbb{Z} \ni
        \chi \expar{ \mathcal{O} \expar{ \frac{t_{1}}{s_{1}} F_{1} + \frac{1}{\mu} F_{2}}} 
        =
        \frac{1}{2} \expar{ \frac{t_{1}}{s_{1}} F_{1} + \frac{1}{\mu} F_{2}}^{2} \\
        =
        \frac{t_{1} F_{1} \cdot F_{2}}{s_{1} \mu}
        =
        \frac{t_{1}}{s_{1}}.
    \]
    Hence, $t_{1} / s_{1} \in \mathbb{Z}$. Then, $\frac{\mu}{F_{1} \cdot F_{2}} F_{1} = F_{1}$ and $(1/ \mu) F_{2}$ form a basis of $\Num (X)$.

    \textit{Case 2.}
    We prove the statement in the case of $F_{1} \cdot F_{2} \neq \mu$.
    Take any $D = \sum_{i=1}^2 (t_{i} / s_{i}) F_{i} \in \Num (X) \otimes \mathbb{Q}$ with $t_{i}/s_{i}$ irreducible fractions. Assume $D \in \Num (X)$.
    First, we claim $s_{2} \ne F_{1} \cdot F_{2}$.
    Assume, for contradiction, that $s_{2} = F_{1} \cdot F_{2}$.
    Combining \cref{F2-mu} with
    \[
        (F_{1} \cdot F_{2}, \mu ) = 
        (4,2), 
        (8,4) \mbox{ or }
        (9,3),
    \]
    we have $\frac{t_{1}}{s_{1}}F_{1} +  \frac{l}{F_{1} \cdot F_{2}}F_{2} \in \Num (X)$ with some $l \in \{ -1,1 \}$.
    Then
    \[
        \chi \expar{ \mathcal{O} \expar{ \frac{t_{1}}{s_{1}} F_{1} + \frac{l}{F_{1} \cdot F_{2}} F_{2}}} 
        = \frac{t_{1}l}{s_{1}} \in \mathbb{Z}.
    \]
    This gives 
    \[
        \frac{1}{F_{1} \cdot F_{2}} F_{2} \in \Num (X),
    \]
    which contradicts \cref{NUM_lem}.
     Next, we claim $\frac{\mu}{F_{1} \cdot F_{2}} F_{1}  \in \Num (X)$.
    It follows that
    \[
        D \cdot F_{1} = \frac{t_{2}F_{2} \cdot F_{1}}{s_{2}} \in \mathbb{Z}.
    \]
    Thus $s_{2}$ divides $F_{1} \cdot F_{2}$.
    By the previous claim, we can write
    \[
        D = \frac{t_{1}}{s_{1}} F_{1} + \frac{t_{2}^{\prime}}{\mu} F_{2}
    \]
    with some $t_{2}^{\prime} \in \mathbb{Z}$. Here, we have
    \[
        D \cdot F_{1} 
        = \frac{t_{2}^{\prime}}{\mu} F_{1} \cdot F_{2}
        \in \expar{ \frac{F_{1} \cdot F_{2}}{\mu} } \mathbb{Z}.
    \]
    Since the intersection pairing is non-degenerate, it follows that 
    \[
        \frac{\mu}{F_{1} \cdot F_{2}} F_{1} \in \Num (X).
    \]
    Note that $D \in \Num (X)$ is equivalent to $\frac{t_{1}}{s_{1}} F_{1} + \frac{1}{\mu} F_{2} \in \Num (X)$.
    Then 
    \[
        \mathbb{Z} \ni \chi \expar{ \mathcal{O} \expar{ \frac{t_{1}}{s_{1}} F_{1} + \frac{1}{\mu} F_{2}}} 
        =
        \frac{t_{1}F_{1} \cdot F_{2}}{s_{1}\mu}.
    \]
    Thus $s_{1}$ divides $\frac{F_{1} \cdot F_{2}}{\mu}$ and the assertion holds.
\end{proof}

\begin{rmk} \label{ED2}
    Let $X$ be a bielliptic surface and $D$ be an effective divisor on $X$. Then we have $D^{2} \ge 0$. Indeed, we can write 
    \[
        D \equiv \frac{s_{1}}{\lambda_{1}} F_{1} + \frac{s_{2}}{\lambda_{2}} F_{2}
    \]
    for some $s_{1}, s_{2} \in \mathbb{Z}$. We note $s_{1}, s_{2} \ge 0$ since $D \cdot F_{1} \ge 0$ and $D \cdot F_{2} \ge 0$. Then we obtain
    \[
        D^{2} = 2 s_{1} s_{2} \ge 0.
    \]
    It follows from this that $\Num (X)$ is an even lattice.
\end{rmk}

%% file: sections/section2/Euler.tex
\subsection{The Euler form on surfaces}

Let $X$ be a variety. For objects $\mathcal{E}, \mathcal{F} \in D(X)$, we define the Euler form as
\[
    \chi ( \mathcal{E}, \mathcal{F} ) 
    \coloneqq 
    \sum_{i} (-1)^{i} \dim \Hom_{D(X)}^{i}(\mathcal{E}, \mathcal{F}).
\]
For a object $\mathcal{E} \in D(X)$ we define its \emph{Mukai vector} as
\[
    v (\mathcal{E}) \coloneqq \ch (\mathcal{E}) \cdot \sqrt{\td(X)},
\]
where $\td (X) \in A(X) \otimes \mathbb{Q}$ is the Todd class of $X$.
We define on the rational Chow group $A(X) \otimes \mathbb{Q}$ a bilinear form $\langle \cdot , \cdot \rangle$, called \emph{Mukai pairing}, by setting
\[
    \langle v , w \rangle \coloneqq - \int_{X} v^{\ast} \cdot w \cdot \exp \expar{ \frac{1}{2} \ch_{1}(X)}
\]
for $v,w \in A(X) \otimes \mathbb{Q}$.
In the surface case, the Riemann--Roch theorem yields
\begin{eqnarray*}
    \chi ( \mathcal{E}, \mathcal{F} )  
    &=&
    \int_{X} \ch(\mathcal{E}^{\vee}) \cdot \ch(\mathcal{F}) \cdot \td (X) \\
    &=&
    \ch_{0}(\mathcal{E}) \ch_{2}(\mathcal{F})
    - \ch_{1}(\mathcal{E}) \ch_{1}(\mathcal{F})
    + \ch_{0}(\mathcal{F}) \ch_{2}(\mathcal{E}) \\
    &+& \frac{1}{2}(
    \ch_{0}(\mathcal{F}) \ch_{1}(\mathcal{E})
    - \ch_{0}(\mathcal{E}) \ch_{1}(\mathcal{F})
    ) \cdot K_{X}
    + \ch_{0}(\mathcal{E}) \ch_{0}(\mathcal{F}) \chi(\mathcal{O}_{X}) \\
    &=& - \langle v(\mathcal{E}) , v(\mathcal{F}) \rangle.
\end{eqnarray*}
In particular, if $X$ is a bielliptic surface, it follows that
\[
    \chi ( \mathcal{E}, \mathcal{F} ) = 
    \ch_{0}(\mathcal{E}) \ch_{2}(\mathcal{F})
    - \ch_{1}(\mathcal{E}) \ch_{1}(\mathcal{F})
    + \ch_{0}(\mathcal{F}) \ch_{2}(\mathcal{E})
\]
since $K_{X} \equiv 0$ and $\chi(\mathcal{O}_{X}) = 0$. 

%% file: sections/section2/matrix.tex
\subsection{Action of autoequivalences on $\mathbb{Z} \oplus \Num (X) \oplus \mathbb{Z}$} \label{Matrix}

Let $X$,$Y$ be varieties and 
\[
    p_{Y} \colon Y \times X \to Y,
    \quad
    p_{X} \colon Y \times X \to X
\]
be the canonical projections.

\begin{definition}
    Let $\mathcal{P} \in D(Y \times X)$. The induced \emph{integral functor} is the functor
    \[
        \Phi_{\mathcal{P}} \colon D(Y) \to D(X) ;  \mathcal{E} \mapsto \mathbb{R}p_{X \ast} (p_{Y}^{\ast} \mathcal{E} \overset{\mathbb{L}}{\otimes} \mathcal{P}).
    \]
    An integral functor $\Phi_{\mathcal{P}} \colon D(Y) \to D(X)$ is called a \emph{Fourier--Mukai functor} if it is an equivalence.
\end{definition}

Let
\[
    F \colon D(Y) \to D(X)
\]
be a fully faithful functor. By \cite[Theorem 3.2.1]{MR1998775}, it is known that 
there exists an object $\mathcal{P} \in D(Y \times X)$ unique up to isomorphism such that $F$ is isomorphic to $\Phi_{\mathcal{P}}$.

Let $\Phi_{\mathcal{P}} \colon D(Y) \to D(X)$ denote an integral functor. By the Grothendieck--Riemann--Roch theorem, $\Phi_{\mathcal{P}}$ induces the commutative diagram
\[
    \begin{CD}
            D(Y) @>{\Phi_{\mathcal{P}}}>> D(X) \\
            @V{v}VV    @V{v}VV \\
            A(Y) \otimes \mathbb{Q}   @>{\Phi_{v(\mathcal{P})}^{A}}>>  A(X) \otimes \mathbb{Q}
    \end{CD}
\]
and the automorphism
$\Phi_{v(\mathcal{P})}^{A}$ is defined by
\[
   \Phi_{v(\mathcal{P})}^{A} (\alpha) 
    = \pi_{X \ast} (\pi_{Y}^{\ast}\alpha \cdot v(\mathcal{P})).
\]

In the following, we assume that $X=Y$ and $X$ is a bielliptic surface. 
Since the Todd class of a bielliptic surface is trivial, we have $v(\mathcal{E}) = \ch(\mathcal{E})$
for $\mathcal{E} \in D(X)$. 
The Chow group decomposes by codimension as
\[
    A(X) \otimes \mathbb{Q} = ( A^{0}(X) \otimes \mathbb{Q}) \oplus ( A^{1}(X) \otimes \mathbb{Q}) \oplus ( A^{2}(X) \otimes \mathbb{Q}).
\]
Here, $A^{0}(X) \simeq \mathbb{Z}$ and $A^{1}(X) = \Pic (X)$.
By using the natural surjective degree map
$\deg \colon A^{2}(X) \to \mathbb{Z}$
and the canonical surjection
$\Pic (X) \to \Num (X)$,
we obtain a natural surjection
\[
    A^{0}(X) \oplus A^{1}(X) \oplus A^{2}(X) \to \mathbb{Z} \oplus \Num (X) \oplus \mathbb{Z}.
\]
Note that $\Num (X)$ is an even lattice by \cref{ED2}.
By abuse of notation, we use the same symbol
$\Phi_{\ch(\mathcal{P})}^{A}$
for the induced automorphism on $\mathbb{Z} \oplus \Num (X) \oplus \mathbb{Z}$.
Consequently, we obtain the following commutative diagram:
\[
    \begin{CD}
        D(X) @>{\Phi_{\mathcal{P}}}>> D(X) \\
        @V{\ch}VV    @V{\ch}VV \\
        \mathbb{Z} \oplus \Num(X) \oplus \mathbb{Z}   @>{\Phi_{\ch(\mathcal{P})}^{A}}>>  \mathbb{Z} \oplus \Num(X) \oplus \mathbb{Z}.
    \end{CD}
\]
In view of \cref{NUM}, we further regard it as an element of $\Aut (\mathbb{Z}^{4}) \simeq  \GL_{4}(\mathbb{Z})$.
Let $\Phi_{M} \in \GL_{4}(\mathbb{Z})$ denote the matrix associated to $\Phi \in \Auteq D(X)$. 
For autoequivalence $\Phi \in \Auteq D(X)$, we adopt the following notation for convenience:
\[
    \Phi_{M} (a,b,c,d) \coloneqq  \Phi_{M} 
    \begin{bmatrix}
        a \\
        b \\
        c \\
        d \\
    \end{bmatrix}.
 \]
 
Put
\[
    \bm{v_{1}} \coloneqq \Phi_{M}(1,0,0,0) , \quad
    \bm{v_{2}} \coloneqq \Phi_{M}(0,1,0,0),
\]
\[
    \bm{v_{3}} \coloneqq \Phi_{M}(0,0,1,0), \quad
    \bm{v_{4}} \coloneqq \Phi_{M}(0,0,0,1).
\]
Then we have
\[
    \Phi_{M} =
    \begin{bmatrix}
        \bm{v_{1}} & \bm{v_{2}} & \bm{v_{3}} &\bm{v_{4}}\\
    \end{bmatrix}.
\]
We can calculate Mukai pairings on $\mathbb{Z} \oplus\Num (X) \oplus \mathbb{Z} \simeq \mathbb{Z}^{4}$ as follows:
\begin{eqnarray*}
    \exgen{ \expar{r, 
    \frac{s_{1}}{\lambda_{1}}F_{1} + \frac{s_{2}}{\lambda_{2}}F_{2}, t} , 
    \expar{r^{\prime}, \frac{s_{1}^{\prime}}{\lambda_{1}} F_{1} + \frac{s_{2}^{\prime}} {\lambda_{2}}F_{2},t^{\prime}}}
    &=&
    s_{1}s_{2}^{\prime} + s_{2}s_{1}^{\prime} - rt^{\prime} - tr^{\prime} \\
    &\eqqcolon& 
    \big\langle (r,s_{1},s_{2},t) , (r^{\prime},s_{1}^{\prime},s_{2}^{\prime},t^{\prime}) \big\rangle_{\mathbb{Z}^{4}}. 
\end{eqnarray*}
Since autoequivalences preserve Mukai pairings by \cite[Proposition 5.44]{MR2244106}, it follows that
\begin{equation} \label{V14}
    \langle \bm{v_{1}} , \bm{v_{4}} \rangle_{\mathbb{Z}^{4}} 
    = \langle \bm{v_{4}} , \bm{v_{1}} \rangle_{\mathbb{Z}^{4}}
    = \langle (0,0,0,1) , (1,0,0,0) \rangle_{\mathbb{Z}^{4}}
        = -1,
\end{equation}
\begin{equation} \label{V23}
    \langle \bm{v_{2}} , \bm{v_{3}} \rangle_{\mathbb{Z}^{4}} 
    = \langle \bm{v_{3}} , \bm{v_{2}} \rangle_{\mathbb{Z}^{4}} 
    = \langle (0,0,1,0) , (0,1,0,0) \rangle_{\mathbb{Z}^{4}}
    = 1
\end{equation}
and for the other pairs of $i,j$,
\begin{equation} \label{Vij}
    \langle \bm{v_{i}} , \bm{v_{j}} \rangle_{\mathbb{Z}^{4}} 
    = \langle \bm{v_{j}} , \bm{v_{i}} \rangle_{\mathbb{Z}^{4}} 
    = 0.
\end{equation}

\begin{rmk} \label{LM}
    For $u_{1}, u_{2} \in \mathbb{Z}$, the matrix representing the action of tensoring with $\mathcal{O}\Big(\frac{u_{1}}{\lambda_{1}}F_{1}+ \frac{u_{2}}{\lambda_{2}} F_{2}\Big)$ is given by
    
    \begin{equation*}
    \left(
    (-) \otimes \mathcal{O} \Big(\frac{u_{1}}{\lambda_{1}}F_{1} + \frac{u_{2}}{\lambda_{2}}F_{2} \Big) \right)_{M} 
    = 
    \begin{bmatrix}
        1 & 0 & 0 & 0 \\
        u_{1} & 1 & 0 & 0 \\
        u_{2} & 0 & 1 & 0 \\
        u_{1}u_{2} & u_{2} & u_{1} & 1 \\
    \end{bmatrix}
    =
    \begin{bmatrix}
        1 & 0 \\
        u_{2} & 1 \\
    \end{bmatrix}
    \otimes
    \begin{bmatrix}
        1 & 0 \\
        u_{1} & 1 \\
    \end{bmatrix}.
    \end{equation*}
\end{rmk}

%% file: sections/section2/canonical_cover.tex
\subsection{Canonical covers} \label{canonical_cover}

For a variety whose canonical bundle has finite order, we can consider the canonical cover. We now summarize some properties of canonical covers useful for studying such surfaces.

\begin{definition}
    Let $X,Y$ be varieties. 
    Let $p_{X} \colon \tilde{X} \to X$, $p_{Y} \colon \tilde{Y} \to Y$ be canonical covers of $X,$ $Y$. Given a functor $\Phi \colon D(Y) \to D(X)$, a lift of $\Phi$ is a functor $\tilde{\Phi} \colon D(\tilde{Y}) \to D(\tilde{X})$ such that there exist isomorphisms of functors
    \[
    p_{X*} \circ \tilde{\Phi} \simeq \Phi \circ p_{Y*},
    \hspace{10pt}
    p_{X}^{*} \circ \Phi \simeq \tilde{\Phi} \circ p_{Y}^{*}.
    \]
\end{definition}

The following theorem is useful when considering autoequivalences on a bielliptic surface.

 \begin{thm}[{\cite[Theorem 4.5]{MR3713877}}] \label{LCC}
     Let $X,Y$ be varieties such that  $\ord(\omega_{X})$ and $\ord(\omega_{Y})$ are finite and coprime to $\Char (k)$. 
     Then for any equivalence \\ $\Phi \colon D(Y) \to D(X)$, there exists a lift of $\Phi$.
 \end{thm}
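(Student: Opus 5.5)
Since \cref{LCC} is quoted from \cite{MR3713877}, I only sketch how I would prove it, following the Bridgeland--Maciocia strategy for canonical covers. Write $n=\ord(\omega_{X})$. The canonical cover is $p_{X}\colon \tilde X=\Spec_{X}\bigoplus_{i=0}^{n-1}\omega_{X}^{i}\to X$. Because $n$ is coprime to $\Char(k)$, this is an étale $\mu_{n}$-torsor, and similarly for $Y$. Let $\Phi=\Phi_{\mathcal P}$ with $\mathcal P\in D(Y\times X)$.

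\emph{Step 1: $\Phi$ intertwines the twists by $\omega_{Y}$ and $\omega_{X}$.} An equivalence commutes with Serre functors, and $\dim X=\dim Y$, so $\Phi\circ(-\otimes\omega_{Y})\simeq(-\otimes\omega_{X})\circ\Phi$. By uniqueness of Fourier--Mukai kernels this gives
\[
\mathcal P\otimes \mathcal L\simeq\mathcal P,\qquad \mathcal L\coloneqq p_{Y}^{\ast}\omega_{Y}^{-1}\otimes p_{X}^{\ast}\omega_{X}.
\]
Applying the same argument to powers shows that $\ord(\omega_{Y})=\ord(\omega_{X})=n$. Then $\mathcal L$ has order exactly $n$ on $Y\times X$; if some smaller power were trivial, restricting to a fibre would contradict the orders of $\omega_{X}$ and $\omega_{Y}$.

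\emph{Step 2: descend $\mathcal P$ to the cyclic cover $Z\to Y\times X$ defined by $\mathcal L$.} Here $Z=\Spec\bigoplus_{i=0}^{n-1}\mathcal L^{i}$, which is the quotient of $\tilde Y\times\tilde X$ by the diagonal $\mu_{n}$. Fix an isomorphism $\varphi\colon\mathcal P\otimes\mathcal L\to\mathcal P$. Iterating gives an automorphism $\varphi^{(n)}\colon\mathcal P\simeq\mathcal P\otimes\mathcal L^{n}\to\mathcal P$.

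Since $\Phi$ is an equivalence, $\End(\mathcal P)\simeq\End(\mathrm{id}_{D(Y)})\supset$ the scalars, and the automorphisms we need lie in $HH^{0}(Y)=H^{0}(\mathcal O_{Y})=k$ for connected $Y$. Hence $\varphi^{(n)}$ is a nonzero scalar $c$. Replacing $\varphi$ by $c^{-1/n}\varphi$ normalizes $\varphi^{(n)}=\mathrm{id}$; this only needs $k$ algebraically closed. With this normalization $\varphi$ makes $\mathcal P$ into a module over $\bigoplus_{i}\mathcal L^{i}$, so $\mathcal P\simeq q_{\ast}\mathcal Q$ for some $\mathcal Q\in D(Z)$, where $q\colon Z\to Y\times X$.

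This is the step I expect to be the main obstacle. Doing it honestly on the derived level requires producing a genuine object of $D(Z)$, for instance via $\mu_{n}$-equivariant objects and the equivalence $D(Z)\simeq D^{\mu_{n}}(\ldots)$, rather than a homotopy module. The hypothesis $\gcd(n,\Char k)=1$ is what makes $\mu_{n}$ linearly reductive, so that equivariant descent and the splitting into eigensheaves work.

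\emph{Step 3: define the lift and check it.} Let $\tilde{\mathcal P}$ be the pullback of $\mathcal Q$ along $\tilde Y\times\tilde X\to Z$, and set $\tilde\Phi=\Phi_{\tilde{\mathcal P}}$. Flat base change along the étale maps, together with the projection formula, gives $p_{X\ast}\circ\tilde\Phi\simeq\Phi\circ p_{Y\ast}$ and $p_{X}^{\ast}\circ\Phi\simeq\tilde\Phi\circ p_{Y}^{\ast}$. These are computed from $(1\times p_{X})_{\ast}\tilde{\mathcal P}$ and $(p_{Y}\times 1)^{\ast}\mathcal P$, both of which are sums of $\mathcal Q$ twisted by powers of $\mathcal L$, using $\mathcal P\otimes\mathcal L\simeq\mathcal P$.

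Finally, I would verify that $\tilde\Phi$ is an equivalence. The same construction applied to the quasi-inverse of $\Phi$ produces a candidate inverse. The compositions $\tilde\Psi\circ\tilde\Phi$ and $\tilde\Phi\circ\tilde\Psi$ are then lifts of the identity, hence given by kernels that are twists of the graph of a deck transformation. Composing with a suitable element of $\mu_{n}$ yields a quasi-inverse.
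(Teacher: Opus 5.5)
Your sketch is essentially the Bridgeland--Maciocia argument. The paper does not reprove this result: it quotes it from \cite[Theorem 4.5]{MR3713877} and adds only, via \cite[Proposition 2.1]{HonigsLieblichTirabassi2017arXiv1708.03409v1}, that it still holds when $\ord(\omega_{X})$ and $\ord(\omega_{Y})$ are prime to $\Char(k)$. Your sketch is correct in substance, and the derived descent you flag is closed by your own normalisation $\varphi^{(n)}=\mathrm{id}$, which identifies $\operatorname{End}(q^{*}\mathcal{P})\simeq\bigoplus_{i}\operatorname{Hom}(\mathcal{P},\mathcal{P}\otimes\mathcal{L}^{i})$ with $k[\mathbb{Z}/n]\simeq k^{n}$, so splitting idempotents in $D(Z)$ gives $q^{*}\mathcal{P}\simeq\bigoplus_{\gamma}\gamma^{*}\mathcal{Q}$ (with $\gamma$ over the deck group of $q$) and $q_{*}\mathcal{Q}\simeq\mathcal{P}$ by Krull--Schmidt (alternatively use Elagin's $D(Z)\simeq D(Y\times X)^{\mathbb{Z}/n}$ for the action by $-\otimes\mathcal{L}$); this, together with \'{e}taleness of the covers, is where invertibility of $n$ in $k$ enters, not linear reductivity of $\mu_{n}$, which holds in every characteristic.
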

 \cite[Theorem 4.5]{MR3713877} is proved over $\mathbb{C}$. 
 However, the proof works over any algebraically closed field under the assumption that $\ord(\omega_{X})$ and $\ord(\omega_{Y})$ are coprime to $\Char (k)$ (see \cite[Proposition 2.1]{HonigsLieblichTirabassi2017arXiv1708.03409v1}). As a consequence of this theorem, we obtain the following.

 \begin{cor} \label{RIP}
    Let $X,Y$ be surfaces such that  $\ord(\omega_{X})$ and $\ord(\omega_{Y})$ are finite and coprime to $\Char (k)$. Take $\mathcal{E} \in D(\tilde{Y})$. Suppose that there exists an equivalence $\Phi \colon D(Y) \to D(X)$. Then $\rk(\Phi(p_{Y\ast}(\mathcal{E}))) \in \ord(\omega_{X})\mathbb{Z}$.
    In particular, for any point $y \in Y$, we have
    $\rk (\Phi (\mathcal{O}_{y})) \in \ord(\omega_{X})\mathbb{Z}$.
\end{cor}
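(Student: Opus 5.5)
We use \cref{LCC} to replace $\Phi$ by its lift $\tilde{\Phi} \colon D(\tilde{Y}) \to D(\tilde{X})$. The lift satisfies $p_{X*} \circ \tilde{\Phi} \simeq \Phi \circ p_{Y*}$, so
\[
    \Phi(p_{Y*}\mathcal{E}) \simeq p_{X*}\bigl(\tilde{\Phi}(\mathcal{E})\bigr).
\]
The claim therefore reduces to the following fact about the canonical cover: for every object $\mathcal{G} \in D(\tilde{X})$, the rank of $p_{X*}\mathcal{G}$ lies in $\ord(\omega_{X})\mathbb{Z}$.

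To prove this fact, recall that $p_{X} \colon \tilde{X} \to X$ is the canonical cover. Its degree equals $n \coloneqq \ord(\omega_{X})$, which is coprime to $\Char(k)$, so $p_X$ is étale. It is also finite and flat. For a finite flat morphism of degree $n$, pushforward multiplies generic rank by the degree: $\rk(p_{X*}\mathcal{G}) = n \cdot \rk(\mathcal{G})$. We check this first for a coherent sheaf $\mathcal{G}$ by working at the generic point. If $\tilde{X}$ is connected, the generic fibre of $p_X$ is a field extension of degree $n$, and the rank of $\mathcal{G}$ is the dimension of its generic stalk over the function field of $\tilde{X}$. Hence the generic stalk of $p_{X*}\mathcal{G}$ has dimension $n\rk(\mathcal{G})$ over $k(X)$. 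If $\tilde{X}$ were disconnected, we would instead interpret $\rk$ as the degree-zero component of the Chern character and use $p_{X*}$ on $A^0$. Since $p_{X*}$ is exact and rank is additive in triangles, the formula extends to complexes via their cohomology sheaves. This gives $\rk(\Phi(p_{Y*}\mathcal{E})) = n \cdot \rk(\tilde{\Phi}(\mathcal{E})) \in n\mathbb{Z}$.

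For the particular case, take $y \in Y$ and choose a point $\tilde{y} \in \tilde{Y}$ over it. Then $p_{Y*}\mathcal{O}_{\tilde{y}} \simeq \mathcal{O}_{y}$, since $p_Y$ is étale and the residue fields are both $k$, as $k$ is algebraically closed. Applying the first part with $\mathcal{E} = \mathcal{O}_{\tilde{y}}$ gives $\rk(\Phi(\mathcal{O}_{y})) \in \ord(\omega_{X})\mathbb{Z}$.

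The main point to check is the rank formula for $p_{X*}$. This needs the precise description of the canonical cover: it is $\mathrm{Spec}\bigoplus_{i=0}^{n-1}\omega_X^{i}$, which is locally free of rank $n$ over $\mathcal{O}_X$, and this gives flatness of degree $n$ directly. Once that is verified, the rest is formal.
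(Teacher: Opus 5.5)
Your proof is correct and is essentially the paper's argument. You lift $\Phi$ via \cref{LCC} so that $\Phi(p_{Y*}\mathcal{E}) \simeq p_{X*}\tilde{\Phi}(\mathcal{E})$, and then use that pushforward along the degree-$\ord(\omega_{X})$ canonical cover multiplies rank by $\ord(\omega_{X})$. You check this at the generic point, whereas the paper uses $\ch(p_{X*}\mathcal{O}_{\tilde{X}}) = (\ord(\omega_{X}),0,0)$ together with the fact that rank-zero sheaves push forward to rank-zero sheaves. Your passage to complexes via exactness of $p_{X*}$, and the identification $\mathcal{O}_{y} \simeq p_{Y*}\mathcal{O}_{\tilde{y}}$ for the final clause, spell out details the paper leaves implicit.
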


\begin{proof}
    Note $\ch (p_{X\ast})(1,0,0) = (\ord(\omega_{X}),0,0)$ and $\rk(p_{X\ast}(\mathcal{F
    }))=0$ for any $\mathcal{F} \in \Coh(\tilde{X})$ with $\rk(\mathcal{F})=0$. 
    Thus for any $\mathcal{F} \in \Coh(\tilde{X
    })$, we obtain $\rk(p_{X\ast}(\mathcal{F})) \in \ord(\omega_{X}) \mathbb{Z}.$ This gives that
    \[
        \rk(\Phi(p_{Y\ast}(\mathcal{E})))
        = 
        \rk ( p_{X\ast}(\tilde{\Phi}(\mathcal{E}) )) \in \ord(\omega_{X})\mathbb{Z}. 
    \]
\end{proof}

\begin{rmk}
    Let $X$ be a bielliptic surface. 
    Note that $\ord (\omega_{X})$ is coprime to $\Char (k)$ (see \cref{table}).
\end{rmk}

We describe the canonical cover of a bielliptic surface. Let $X$ be a bielliptic surface. Then $X$ has one of the following invariants, depending on whether $\omega_{X}\simeq \mathcal{O}_X$ or not.

\begin{center}
    \begin{tabular}{c|ccc}
        $\omega_{X}$ & $\chi(\mathcal O_X)$ & $q(X)$ & $p_g(X)$ \\
        \hline
        trivial & $0$ & $2$ & $1$ \\
        non-trivial & $0$ & $1$ & $0$ \\
    \end{tabular}
\end{center}
If $\Char (k)=0$, then $\omega_{X} \not \simeq \mathcal{O}_{X}$ is always the case. On the other hand, $\omega_{X} \simeq \mathcal{O}_{X}$ occurs only if $\Char (k)=2$ or $3$.

Let $p \colon \tilde{X} \to X$ be the canonical cover of $X$ with $\omega_{X} \not \simeq \mathcal{O}_{X}$. 
Note that $K_{\tilde{X}}$ is trivial, and
\[
    \chi(\mathcal{O}_{\tilde{X}}) = \ord (\omega_{X}) \chi(\mathcal{O}_{X})=0
\]
(see \cite[Proposition 9.7]{badescu2001algebraic}). 
Then, by the classification of surfaces, the canonical cover $\tilde{X}$ of a bielliptic surface $X$ is 
\begin{itemize}
    \item 
    an abelian surface if $\Char(k) =0$, 
    \item  
    either an abelian surface, a bielliptic surface or a quasi-bielliptic surface if $\Char(k) >0$. 
\end{itemize}
In fact, the following lemma holds.

\begin{lem}
    Let $X$ be a bielliptic surface with $\omega_{X} \not \simeq \mathcal{O}_{X}$. Then the canonical cover of $X$ is not a quasi-bielliptic surface.
\end{lem}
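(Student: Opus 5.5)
We will show that the canonical cover $p\colon \tilde{X} \to X$ is a \emph{product-type} quotient of $A \times B$, and hence has a smooth Albanese map with elliptic fibres. This excludes the quasi-bielliptic case, since a quasi-bielliptic surface has Albanese fibres that are cuspidal rational curves.

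Write $X = (A \times B)/G$ with $G = H \times G_{0}$ as in \cref{bie}, and put $n \coloneqq \ord(\omega_{X})$. By \cref{table}, $n$ is coprime to $\Char(k)$ and is a proper divisor of, or equal to, $|G_{0}|$. The cover $\tilde{X} \to X$ is the étale $\mu_{n}$-torsor attached to $\omega_{X}$. The plan is to identify it with an intermediate quotient $(A \times B)/G'$, where $G' = \ker(G \to \mu_{n})$ is the kernel of the character through which $G$ acts on the generator $dz_{B}$ of $H^{0}(B,\omega_{B})$. Concretely, $\omega_{X}$ corresponds to the $G$-linearized sheaf $\omega_{A \times B} = \mathcal{O}\,dz_{A}\wedge dz_{B}$. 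The group $G$ acts trivially on $dz_{A}$, since it acts on $A$ by translations, and acts on $dz_{B}$ through the character $\chi\colon G \to \Aut_{0}(B) \to k^{\ast}$. The image of $\chi$ is cyclic of order equal to the tame part of $|G_{0}|$, which should be exactly $n$ as listed in \cref{table}.

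The first step is therefore to check that $\omega_{X}$ is the line bundle associated with the character $\chi$. This gives $\omega_{X}^{\otimes m} \simeq \mathcal{O}_{X}$ if and only if $\chi^{m} = 1$. The second step is to identify the torsor: the $\mu_{n}$-torsor obtained by extracting an $n$-th root of the trivialization $\omega_{X}^{\otimes n} \simeq \mathcal{O}_{X}$ is $(A \times B)/\ker\chi \to (A \times B)/G$. Since $n$ is prime to $\Char(k)$, both sides are étale $\mu_{n}\simeq \mathbb{Z}/n$-covers realizing the same class in $H^{1}(X,\mu_{n})$. Hence $\tilde{X} \simeq (A \times B)/G'$ with $G' = \ker \chi \subset G$.

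Finally, $G'$ still acts freely on $A$ by translations, because it is a subgroup of $G \subset A$. The projection $(A \times B)/G' \to A/G'$ is then smooth with fibres isomorphic to the elliptic curve $B$. Thus $\tilde{X}$ is either an abelian surface, when $G'$ acts on $B$ by translations, or a bielliptic surface. It is never quasi-bielliptic, because every fibre of this fibration over the elliptic curve $A/G'$ is smooth. Alternatively, one can note that $\tilde{X}$ receives a dominant map from the abelian surface $A \times B$, and a quasi-bielliptic surface admits no such map with smooth elliptic fibres.

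The main obstacle is the second step in characteristic $2$ or $3$, where $G$ may be non-reduced (for example $H = \mu_{2}$) and $G_{0}$ may have order divisible by $p$, as in type $(6,1)$. In these cases one has to check that $\chi$ factors through the étale quotient of $G$ and that its image has order exactly $n$ in \cref{table}. The approach will be to show that $\chi$ kills $H$ and the $p$-part of $G_{0}$, since $\mu_{n}$ is étale of order prime to $p$. After that, the torsor comparison is formal.
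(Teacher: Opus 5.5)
Your argument is correct, but it takes a different route from the paper. The paper never identifies $\tilde X$. Assuming $\tilde X$ is quasi-bielliptic, it notes that $\tilde X\to X\xrightarrow{f_1}\Alb(X)$ factors through $\Alb(\tilde X)$. Hence the normalization $\mathbb{P}^1$ of a cuspidal Albanese fibre of $\tilde X$ surjects onto a smooth elliptic fibre of $f_1$, which is impossible since $g(\mathbb{P}^1)=0<1$. That uses only that $p$ is finite and that $f_1$ has elliptic fibres, with no information about $G$, about $\chi$, or about the characteristic.

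Your route has two steps. First, you descend the $G$-linearized $\omega_{A\times B}$ to get $\omega_X\simeq L_\chi$, hence $\ord(\omega_X)=\ord(\chi)$. Second, you match the eigensheaf decomposition of the $\mathbb{Z}/n$-cover $(A\times B)/\ker\chi\to X$ with the algebra defining the canonical cover, using $(k^{\ast})^{n}=k^{\ast}$ to absorb the choice of trivialization of $\omega_X^{\otimes n}$. This needs more machinery, but it gives more: an explicit model $\tilde X\simeq (A\times B)/\ker\chi$, the abelian/bielliptic dichotomy at once, and a table-free computation of $\ord(\omega_X)$ as the prime-to-$p$ part of $|G_0|$.

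Three small points.

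\emph{Closing step.} Your last sentence needs one more line. If $\tilde X$ were quasi-bielliptic, the fibration $\tilde X\to A/G'$ would factor through the finite map $\Alb(\tilde X)\to A/G'$. A cuspidal Albanese fibre would then lie in, hence coincide with, an irreducible fibre $\simeq B$, which is a contradiction. This is exactly the paper's fibre comparison.

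\emph{The ``alternatively'' remark.} The dominant map from $A\times B$ contributes nothing by itself. Abelian surfaces do dominate uniruled surfaces, for example $E\times E'\to E\times\mathbb{P}^1$ via a double cover $E'\to\mathbb{P}^1$. So everything rests on the fibration argument.

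\emph{The flagged obstacle.} The problem you raise for non-reduced $G$ never arises. By \cref{table} the only non-reduced case, $(2,\mu_2)$, has $\omega_X\simeq\mathcal{O}_X$. So under the hypothesis $G$ is a constant group acting freely, and $A\times B\to X$ is an \'etale Galois cover. Type $(6,1)$ in characteristic $2$ or $3$ is covered by your observation that $\chi$ kills the Sylow $p$-subgroup of $G_0$.
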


\begin{proof}
    Suppose that $\tilde{X}$ is a quasi-bielliptic surface and let $C$ be a fiber of Albanese map of $\tilde{X}$, which is a rational curve with a cusp. Let $\tilde{C}$ be the normalization of $C$, which satisfies $\tilde{C} \simeq \mathbb{P}^{1}$. Then we have a surjective morphism  
    $\tilde{C} \to f_{1}^{-1}(q)$ for some $q \in \Alb (X)$, where $f_{1}^{-1}(q)$ is an elliptic curve. 
    On the other hand, we have $g(\tilde{C}) < g(f_{1}^{-1}(q))$.
    This is a contradiction.
\end{proof}

Therefore, the canonical cover of a bielliptic surface is either an abelian surface or bielliptic surface. This fact is one of the difficulties in treating the case $\Char (k)>0$.

%% file: sections/section2/relative_FM.tex
\subsection{Relative Fourier--Mukai transforms}
Let $f \colon X \to C$ be a relatively minimal elliptic surface.
We define
\[
    \lambda \coloneqq 
    \min \{ F \cdot D \mid D \mbox{ is the numerical class of an effective divisor} \},
\]
where $F \in \Num(X)$ denotes the numerical class of a smooth fiber of $f$. 
For any $\mathcal{E} \in D(X)$, we define the \textit{fiber degree} of $\mathcal{E}$ to be
\[
    d (\mathcal{E}) = c_{1}(\mathcal{E}) \cdot F.
\]
Suppose $a \in \mathbb{Z}_{>0}$, $b \in \mathbb{Z}$ with $a\lambda$ coprime to $b$. 
Then we can construct an elliptic surface $J_{X}(a,b)$ over $C$, whose fiber over a point $p \in C$ is canonically identified with a component of the moduli space of rank $a$, degree $b$, stable sheaves on the fiber $X_{p}$.
Bridgeland constructed an equivalence $\Phi_{\mathcal{P}} \colon D(J_{X}(a,b)) \to D(X)$ with a universal family $\mathcal{P}$.

\begin{thm}[{\cite[Theorem 5.3]{MR1629929}}] \label{RFM}
    Let $f \colon X \to C$ be a relatively minimal elliptic surface and let $F \in \Num (X)$ be the numerical class of a smooth fiber of $f$. Take a matrix
    \[
        \begin{bmatrix}
            c & a \\
            d & b \\
        \end{bmatrix} \in \SL_{2}(\mathbb{Z})
    \]
    such that $\lambda$ divides $d$ and $a >0$. Then there exists a sheaf $\mathcal{P}$ on $X \times J_{X}(a,b)$, flat and strongly simple over both factors such that for any point $(x,y) \in X \times J_{X}(a,b)$, $\mathcal{P}_{y}$ has Chern class $(0,aF,b)$ on $X$ and $\mathcal{P}_{x}$ has Chern class $(0,aF,-c)$ on $J_{X}(a,b)$. For any such sheaf $\mathcal{P}$, the resulting functor $\Phi \coloneqq \Phi^{\mathcal{P}}_{J_{X}(a,b) \to X} \colon D(J_{X}(a,b)) \to D(X)$, called relative Fourier--Mukai transform, is an equivalence and satisfies
    \[
        \begin{bmatrix}
            \rk(\Phi(\mathcal{E})) \\
            d(\Phi(\mathcal{E})) \\
        \end{bmatrix} = 
        \begin{bmatrix}
            c & a \\
            d & b \\
        \end{bmatrix}
        \begin{bmatrix}
            \rk(\mathcal{E}) \\
            d(\mathcal{E}) \\
        \end{bmatrix}
    \]
    for any object $\mathcal{E} \in D(J_{X}(a,b))$.
\end{thm}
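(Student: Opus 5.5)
This statement is Bridgeland's theorem; I would reconstruct its proof in three stages: build the kernel, prove that $\Phi$ is an equivalence, then compute its action on $(\rk, d)$.

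\emph{Construction of $J_X(a,b)$ and $\mathcal{P}$.} Since $\gcd(a\lambda,b)=1$, every semistable sheaf of rank $a$ and degree $b$ on a smooth fiber is stable. I would take $J_X(a,b)$ to be the closure of the relative moduli space of such stable sheaves on the fibers of $f$. Over the smooth locus, $J_X(a,b)$ is again a relatively minimal elliptic surface over $C$. Multiple and singular fibers need more care: one works with sheaves on the whole fiber that are stable with respect to a fixed polarization.

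The next step is to produce a universal sheaf. Here I would use the hypothesis $\lambda \mid d$. Because $ad-bc=1$ and $\lambda$ divides $d$, one gets $bc\equiv -1 \pmod{\lambda}$. Take a divisor $D$ with $D\cdot F=\lambda$. A suitable combination of twists by $\mathcal{O}(D)$ and of determinant classes then kills the Brauer obstruction, so the twisted universal family descends to an honest sheaf $\mathcal{P}$ on $X\times J_X(a,b)$.

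Flatness over both factors comes from the construction. Strong simplicity over $J_X(a,b)$ follows from stability: for $y\neq y'$ one has $\Hom(\mathcal{P}_y,\mathcal{P}_{y'})=0$, and $\mathcal{P}_y$ is simple. Strong simplicity over $X$ needs a separate argument. I would identify $\mathcal{P}_x$ with the sheaf parametrizing the dual problem: the fiber of $J_X(a,b)$ is itself a fine moduli space, and $X$ is recovered as $J_{J_X(a,b)}(a,-c)$. This identification also yields the Chern class $(0,aF,-c)$ for $\mathcal{P}_x$.

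\emph{Equivalence.} I would apply Bridgeland's criterion. Since $\mathcal{P}$ is strongly simple over $J_X(a,b)$, the functor $\Phi$ is fully faithful. It is an equivalence provided $\mathcal{P}_y\otimes\omega_X\simeq\mathcal{P}_y$ for all $y$. This holds because $\mathcal{P}_y$ is supported on a fiber $X_p$ and $\omega_X|_{X_p}$ is trivial; for multiple fibers the restriction is only torsion, so one must check that tensoring by it preserves each stable sheaf on the fiber.

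I expect this step, together with the existence of the universal sheaf, to be the main obstacle. Both the fiberwise triviality of $\omega_X$ and the precise role of $\lambda\mid d$ at the multiple fibers must be handled carefully. I would first treat the locus of smooth fibers, and then argue by flatness and closedness over the finitely many special fibers.

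\emph{The matrix.} Both $\rk$ and $d$ factor through Chern characters, and $\Phi^{A}$ is linear, so there is a $2\times 2$ integer matrix $M$ with $(\rk\Phi(\mathcal{E}),d(\Phi(\mathcal{E})))^{T}=M(\rk\mathcal{E},d(\mathcal{E}))^{T}$. To identify $M$, write both quantities as Euler pairings. On a surface with $K\cdot F=0$ one has $\rk\mathcal{G}=\chi(\mathcal{O}_x,\mathcal{G})$, and $d(\mathcal{G})$ equals, up to sign, $\chi(\mathcal{O}_{X_p},\mathcal{G})$ for a fiber $X_p$. Adjunction then gives
\[
\chi(\mathcal{O}_x,\Phi(\mathcal{E}))=\chi(\Phi^{-1}(\mathcal{O}_x),\mathcal{E}),
\]
where $\Phi^{-1}(\mathcal{O}_x)$ is $\mathcal{P}_x^{\vee}$ up to shift. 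Its class is determined by $(0,aF,-c)$, and the Riemann--Roch formula of the Euler-form subsection turns the pairing into a linear form in $(\rk\mathcal{E},d(\mathcal{E}))$. A similar computation with the structure sheaf of a fiber gives the second row. Finally, I would fix the signs and the shift by evaluating on test objects. The point sheaf gives $\Phi(\mathcal{O}_y)=\mathcal{P}_y$ with class $(0,aF,b)$. The object $\mathcal{P}_x$ pulled back gives $(\rk,d)=(0,1)\mapsto(a,b)$, using the symmetric description of $\mathcal{P}$ from the first stage. Combined with $\det M=1$ from invertibility of $\Phi$ and the pairing computation, this forces $M=\begin{bmatrix} c&a\\ d&b\end{bmatrix}$.
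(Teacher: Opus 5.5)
The paper gives no proof of this statement; it imports it from Bridgeland, so your outline has to stand on its own. The architecture is sound: a fine relative moduli space from $\gcd(a\lambda,b)=1$, full faithfulness from strong simplicity over $J_X(a,b)$, equivalence from $\omega_X$-invariance, and Riemann--Roch for the matrix. However, two steps have genuine gaps.

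\emph{The $X$-side and the normalization of $\mathcal{P}$.} You get flatness and strong simplicity over $X$, and the class of $\mathcal{P}_x$, from ``$X\simeq J_{J_X(a,b)}(a,-c)$''. That isomorphism is a consequence of the theorem, not an available input. Moreover, no identification can fix the class of $\mathcal{P}_x$ by itself. The universal sheaf is only determined up to $\mathcal{P}\otimes p_{J}^{\ast}L$, and this twist replaces $\ch_2(\mathcal{P}_x)$ by $\ch_2(\mathcal{P}_x)+a(L\cdot F)$ and the first column $(c,d)^{T}$ by $(c,d)^{T}+(L\cdot F)(a,b)^{T}$. So realizing a prescribed first column requires choosing this twist; the residue of $c$ modulo $a\lambda$ is forced by $cb\equiv 1 \pmod{a\lambda}$. (Note that the determinant is $cb-ad=1$, not $ad-bc$.) A workable order is:
\begin{itemize}
\item First prove the equivalence using only the $J_X(a,b)$-side.
\item Then, with $i_x$ the inclusion of $\{x\}\times J_X(a,b)$, you have $\Phi^{-1}(\mathcal{O}_x)\simeq(\mathbb{L}i_x^{\ast}\mathcal{P})^{\vee}[2]$. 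This gives strong simplicity over $X$ for free.
\item Flatness over $X$ follows, for example, because $\mathcal{P}$ is Cohen--Macaulay on $X\times_C J_X(a,b)$ with one-dimensional fibres over $X$.
\end{itemize}

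Strong simplicity over $J_X(a,b)$ also needs more than $\Hom$-vanishing. Since $\Ext^2_X(\mathcal{P}_y,\mathcal{P}_{y'})\simeq\Hom_X(\mathcal{P}_{y'},\mathcal{P}_y\otimes\omega_X)^{\ast}$, it already depends on the $\omega_X$-invariance you leave open; once that holds, $\Ext^1$ vanishes because $\chi=0$. Your closedness idea does give the invariance once made precise. The function $y\mapsto\dim\Hom_X(\mathcal{P}_y,\mathcal{P}_y\otimes\omega_X)$ is upper semicontinuous and equals $1$ on the dense locus over smooth fibres. Tensoring with $\omega_X$ preserves stability and $\ch$ of fibre sheaves, since $K_X$ has degree $0$ on vertical curves. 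Hence a nonzero map $\mathcal{P}_y\to\mathcal{P}_y\otimes\omega_X$ is an isomorphism.

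\emph{The matrix step fails as written.} Your test objects $\mathcal{O}_y$, $\mathcal{P}_y$, $\mathcal{P}_x$ are supported on fibres, so their $(\rk,d)$ on the surface is $(0,0)$ and they impose no condition on $M$. The data $(0,1)\mapsto(a,b)$ live on a fibre curve. Using them requires base change to a general fibre, $\mathbb{L}i_p^{\ast}\Phi(\mathcal{E})\simeq\Phi_p(\mathbb{L}i_p^{\ast}\mathcal{E})$, together with the fact that $(\rk,d)$ on the surface equals $(\rk,\deg)$ of the derived restriction. Invertibility of $\Phi$ only gives $\det M=\pm1$; $\det M=1$ comes from $\Phi_p$ preserving the fibre Euler form $\rk(E)\deg(E')-\deg(E)\rk(E')$.

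More seriously, the Riemann--Roch computation you propose gives $\rk\Phi(\mathcal{E})=\chi(\mathcal{P}_x^{\vee},\mathcal{E})=\ch_2(\mathcal{P}_x)\rk(\mathcal{E})+a\,d(\mathcal{E})$. So with $\ch(\mathcal{P}_x)=(0,aF,-c)$ the top-left entry comes out as $-c$. The shift in $\Phi^{-1}(\mathcal{O}_x)\simeq\mathcal{P}_x^{\vee}[2]$ is even, so there is no sign left to adjust with test objects. The stated matrix forces $\ch(\mathcal{P}|_{\{x\}\times J_X(a,b)})=(0,aF,c)$. The class $(0,aF,-c)$ is that of $\mathcal{E}xt^{1}(\mathcal{P}_x,\mathcal{O})\simeq\Phi^{-1}(\mathcal{O}_x)[-1]$. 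The paper's own use of the theorem in \cref{matrix}, namely $\Phi_M(0,-a\lambda,0,c)=(0,0,0,1)$, is consistent only with this reading. With that normalization, $M$ is determined by three pieces of data: the second column, from $\mathcal{O}_y\mapsto\mathcal{P}_y$ on a general fibre; the first row, from $\ch_2(\mathcal{P}_x)=c$; and $\det M=1$.
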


Let $\Phi \colon D(J_{X}(a,b)) \to D(X)$ be a relative Fourier--Mukai transform obtained in \cref{RFM}. Then we define a $2 \times 2$ matrix
\[
    \Phi_{F} \coloneqq 
    \begin{bmatrix}
            c & a \\
            d & b \\
    \end{bmatrix}
    \in \SL_{2}(\mathbb{Z}).
\]

We construct a (non-standard) autoequivalence on a bielliptic surface.  
Recall that a bielliptic surface $X = (A \times B) / G$ has two natural elliptic fibrations
\[
    f_{1} \colon (A \times B) / G \to A /G, \quad 
    f_{2} \colon (A \times B) / G \to B /G.
\]
Take $a \in \mathbb{Z}_{>0}$ and $b \in \mathbb{Z}$ with $a\lambda$ coprime to $b$. Then the moduli space $J_{X}(a,b)$ associated to $f_{i}$ is isomorphic to $X$ for $i \in \{ 1,2 \}$. Indeed, note that $\lambda_i \in \{ 1,2,3,4,6 \}$, and then we can prove $X \simeq J_X(a,b)$ as in   \cite[Lemma 2.3]{uehara2024fouriermukaipartnersellipticruled}. 
Identifying $X$ and $J_X(a,b)$ by this isomorphism, the functor $\Phi : D(J_{X}(a,b)) \rightarrow D(X)$ gives an autoequivalence on $D(X)$. We also call this autoequivalence \emph{a relative Fourier--Mukai transform}.
This autoequivalence is not a standard autoequivalence. In fact, we have $\Phi^{A} (0,0,1) = (0,aF,b)$ for some $a \in \mathbb{Z}_{>0}$ and $b \in \mathbb{Z}$.
 
Let us consider the action of relative Fourier--Mukai transforms on the numerical Chow group $\mathbb{Z}\oplus \Num (X) \oplus \mathbb{Z}$ (see \cref{Matrix}).
The following proposition holds.

\begin{prop} \label{matrix}
    Let $X$ be a bielliptic surface.
    Let $\Phi_{1}$ be a relative Fourier--Mukai transform along $f_{1}$ and $\Phi_{2}$ be a relative Fourier--Mukai transform along $f_{2}$ with 
    \[
        \Phi_{1,F_{1}} = 
        \begin{bmatrix}
            c & a \\
            d & b \\
        \end{bmatrix}
        \in \SL_{2}(\mathbb{Z}),
        \Phi_{2,F_{2}} = 
        \begin{bmatrix}
            c^{\prime} & a^{\prime} \\
            d^{\prime} & b^{\prime} \\
        \end{bmatrix}
        \in \SL_{2}(\mathbb{Z}).
    \] 
    Then
    \[
         \Phi_{1,M} =
         \begin{bmatrix}
             c & a \lambda_{1} \\
             d/\lambda_{1} & b \\
         \end{bmatrix}
         \otimes
         \begin{bmatrix}
            1 & 0 \\
            s & 1 \\
         \end{bmatrix},
         \Phi_{2,M} =
         \begin{bmatrix}
             1 & 0 \\
             s^{\prime} & 1 \\
         \end{bmatrix}
         \otimes
         \begin{bmatrix}
            c^{\prime} & a^{\prime} \lambda_{2} \\
            d^{\prime}/\lambda_{2} & b^{\prime} \\
         \end{bmatrix}
    \]
    for some $s,s^{\prime} \in \mathbb{Z}$.
\end{prop}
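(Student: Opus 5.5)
We describe the plan for $\Phi_{1}$; the case of $\Phi_{2}$ is symmetric, with the roles of the two tensor factors exchanged. In the coordinates $(r,s_{1},s_{2},t)$ of \cref{Matrix}, write
\[
    \ch = \Bigl(r, \tfrac{s_{1}}{\lambda_{1}}F_{1}+\tfrac{s_{2}}{\lambda_{2}}F_{2}, t\Bigr).
\]
The fiber degree along $f_{1}$ is then $d(\mathcal{E}) = c_{1}\cdot F_{1} = s_{2}\lambda_{1}$, since $F_{1}\cdot F_{2}=\lambda_{1}\lambda_{2}$ by \cref{NUM} and \cref{ROT}. \cref{RFM} fixes the pair $(r,d)\mapsto (cr+ad,\ dr+bd)$. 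In the coordinates $(r,s_{2})$ this reads
\[
    (r,s_{2})\mapsto \bigl(cr+a\lambda_{1}s_{2},\ (d/\lambda_{1})r+bs_{2}\bigr),
\]
which is exactly the matrix $\begin{bmatrix} c & a\lambda_{1}\\ d/\lambda_{1} & b\end{bmatrix}$. Reordering the basis as $(r,s_{2})\otimes(\,\cdot\,,\,\cdot\,)$ matches the tensor convention of \cref{LM}: there the first factor acts on $(r,s_{2})$-type data and the second on the complementary index, with ordering $(1,0,0,0),(0,1,0,0),\dots$ corresponding to $e_{1}\otimes e_{1}, e_{1}\otimes e_{2}, e_{2}\otimes e_{1}, e_{2}\otimes e_{2}$, i.e. $r, s_{1}, s_{2}, t$. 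So the goal is to show that $\Phi_{1,M}$ equals $P\otimes Q$ with $P$ the matrix above and $Q=\begin{bmatrix}1&0\\ s&1\end{bmatrix}$.

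\textbf{Step 1 (the images of $\mathcal{O}_x$ and $F_{1}$).} Compute $\Phi_{1,M}$ on $(0,0,0,1)=\ch(\mathcal{O}_x)$ and on $(0,0,\lambda_{1}\cdot?,\dots)$-type classes using the kernel data in \cref{RFM}. The transform of $\mathcal{O}_y$ is $\mathcal{P}_{y}$, with Chern class $(0,aF_{1},b)$, which gives $\bm{v_{4}}=(0,0,a\lambda_{1},b)$. For a structure sheaf of a fiber, supported on a single fiber of $f_{1}$, the image is again supported on fibers, so its rank and $s_{2}$ components vanish. Its $(r,s_{2})$ and $(s_{1},t)$ components are then governed by the fiberwise matrix.

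\textbf{Step 2 (the remaining images via relative structure).} Since $\Phi_{1}$ is relative over $A/G$, it commutes with tensoring by pullbacks of line bundles from $A/G$ up to a standard twist. Tensoring with $\mathcal{O}(F_{1}/\lambda_{1})$ is not such a pullback in general, but $F_{1}$ is, and that suffices rationally. Rather than relying on this, use the isometry relations \eqref{V14}, \eqref{V23} and \eqref{Vij}. Once $\bm{v_{1}}$ is constrained by $(\rk,d)=(c,d)$ and $\bm{v_{4}}$ and $\bm{v_{3}}$ are known, these relations plus integrality pin $\bm{v_{1}},\bm{v_{2}}$ down to the stated form up to one free integer $s$. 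Here $\bm{v_{3}}$ comes from the image of $\mathcal{O}_{F_{1}}$-type classes, and $\bm{v_{1}}$ has a known $r$ and $s_{2}$. Concretely, write the unknowns and solve the linear system $\langle \bm{v_i},\bm{v_j}\rangle$. The $r,s_{2}$ rows are forced by \cref{RFM} applied to all objects.

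\textbf{Main obstacle.} The hardest step is showing that the $s_{1}$ and $t$ components of $\bm{v_{1}}$ and $\bm{v_{2}}$ assemble into the tensor form with a single $s$, rather than an arbitrary isometry. First try the decomposition $\Phi_{1,M}=(\Phi_{1,M}\circ(\text{twist})^{-1})\circ(\text{twist})$. Pick $s$ so that composing with $\otimes\mathcal{O}(-\tfrac{s}{\lambda_{1}}F_{1})$ kills the $s_{1}$-component of $\bm{v_{1}}$. Then verify, using the Mukai pairing relations, that the remaining matrix is $P\otimes I$. Note that \cref{LM} shows $\otimes\mathcal{O}(\tfrac{u_{1}}{\lambda_{1}}F_{1})=I\otimes\begin{bmatrix}1&0\\u_{1}&1\end{bmatrix}$, which commutes appropriately with $P\otimes I$.
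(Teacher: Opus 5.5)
There is a genuine gap, and it starts with a coordinate error that propagates through Steps 1 and 2.

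\textbf{The coordinates are misassigned.} Since $aF_{1}=\frac{a\lambda_{1}}{\lambda_{1}}F_{1}$, the class $(0,aF_{1},b)$ of $\mathcal{P}_{y}$ has $s_{1}=a\lambda_{1}$ and $s_{2}=0$. So $\bm{v_{4}}=(0,a\lambda_{1},0,b)$, not $(0,0,a\lambda_{1},b)$. Your value contradicts your own (correct) remark that objects supported on fibers of $f_{1}$ go to classes with vanishing $r$- and $s_{2}$-entries. It also contradicts the fourth column of the target $P\otimes Q$.

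Likewise $\ch(\mathcal{O}_{F_{1}})=(0,F_{1},0)=\lambda_{1}(0,1,0,0)$, so fiber classes determine $\bm{v_{2}}$, not $\bm{v_{3}}$. The paper obtains $\bm{v_{2}}=(0,c,0,d/\lambda_{1})$ from the other half of the kernel data, $\Phi_{M}(0,-a\lambda_{1},0,c)=(0,0,0,1)$. It writes $(0,1,0,0)$ as a rational combination of $(0,-a\lambda_{1},0,c)$ and $(0,0,0,1)$.

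Your bookkeeping is therefore reversed: $\bm{v_{2}}$ and $\bm{v_{4}}$ are the columns you can compute directly, while $\bm{v_{1}}$ and $\bm{v_{3}}$ must be determined. In particular $\bm{v_{3}}$ cannot be ``known''. Its $s_{1}$- and $t$-entries carry the free integer $s$, which genuinely depends on the choice of kernel: replacing $\mathcal{P}$ by $\mathcal{P}\otimes p_{X}^{\ast}\mathcal{O}(\frac{u}{\lambda_{1}}F_{1})$ replaces $s$ by $s+u$.

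\textbf{The decisive step is only announced.} Your twisting device does not supply it. To kill the $s_{1}$-entry $y'$ of $\bm{v_{1}}$ with $(-)\otimes\mathcal{O}(\frac{u}{\lambda_{1}}F_{1})$ you need $c\mid y'$, which is exactly what has to be shown.

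Your observation that the $r$- and $s_{2}$-rows are dictated by \cref{RFM} is correct. It gives $\bm{v_{1}}=(c,y',d/\lambda_{1},w')$ and $\bm{v_{3}}=(a\lambda_{1},y,b,w)$ more directly than the paper, which extracts these entries from the pairing relations.

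What is missing is the isotropy-plus-coprimality argument:
\begin{itemize}
\item $\langle\bm{v_{3}},\bm{v_{3}}\rangle=2(yb-wa\lambda_{1})=0$ together with $\gcd(a\lambda_{1},b)=1$ gives $(y,w)=s(a\lambda_{1},b)$.
\item $\langle\bm{v_{1}},\bm{v_{1}}\rangle=2(y'd/\lambda_{1}-cw')=0$ together with $\gcd(c,d/\lambda_{1})=1$ gives $(y',w')=s'(c,d/\lambda_{1})$.
\item $\langle\bm{v_{1}},\bm{v_{3}}\rangle=(s'-s)(bc-ad)=s'-s=0$ forces $s'=s$.
\end{itemize}
Both coprimality statements come from $bc-(a\lambda_{1})(d/\lambda_{1})=1$. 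With the coordinates corrected and this computation inserted, your plan becomes essentially the paper's proof.
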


\begin{proof}
    We only prove the statement for $\Phi_{1}$. 
    The case of $\Phi_{2}$ follows by a similar argument.
    Put $\Phi \coloneqq \Phi_{1}$ and $\lambda \coloneqq \lambda_{1}$. By \cref{RFM}, we have
    \[
        \Phi_{M}(0,0,0,1) =  (0, a\lambda, 0,b),
    \]
    \[
        \Phi_{M}(0,-a\lambda,0,c) = (0,0,0,1).
    \]
    As $\Phi_{M}$ is a homomorphism and 
    \[
        (0,1,0,0) = \frac{c}{a\lambda} (0,0,0,1) - \frac{1}{a\lambda} (0,-a\lambda,0,c),
    \]
    we obtain the following:
    \[
        \Phi_{M}(0,1,0,0) 
        = 
        \expar{ 0,c,0,\frac{bc - 1}{a\lambda}} = \expar{0,c,0,\frac{ad}{a\lambda}} = 
        \expar{ 0,c,0,\frac{d}{\lambda}}.
    \]

    As above, we can write
    \[
        \Phi_{M} = 
        \begin{bmatrix}
            x^{\prime} & 0 & x & 0 \\
            y^{\prime} & c & y & a\lambda \\
            z^{\prime} & 0 & z & 0 \\
            w^{\prime} & d/\lambda & w & b \\
        \end{bmatrix}
         \eqqcolon
        \begin{bmatrix}
            \bm{v_{1}} & \bm{v_{2}} & \bm{v_{3}} &\bm{v_{4}}\\
        \end{bmatrix}.
    \]
    Recall \cref{V14}, \cref{V23} and \cref{Vij}.
    As $\bm{v_{3}} \cdot \bm{v_{4}}=0$, we obtain
    \[
        -xb + za\lambda=0.
    \]
    Since $\gcd(a\lambda,b)=1$, it follows that $b$ divides $z$. Set $z \coloneqq bt$ for some $t \in \mathbb{Z}$. 
    Moreover, $\bm{v_{3}} \cdot \bm{v_{3}}=0$ implies
    \[
        -wa\lambda t + ybt = 0.
    \]
    If $t=0$ then $\bm{v_{2}} \cdot \bm{v_{3}}=0$, which is a contradiction. Therefore $t \ne 0$, and $yb = wa\lambda$. Hence $b$ divides $w$. Set $w \coloneqq bs$ for some $s \in \mathbb{Z}$. 

    Similarly, we can write $x^{\prime} \coloneqq c t^{\prime}, y^{\prime} \coloneqq c s^{\prime}$ for some $t^{\prime}, s^{\prime} \in \mathbb{Z}$. Then we obtain
    \[
        \Phi_{M} = 
        \begin{bmatrix}
            ct^{\prime} & 0 & a\lambda t & 0 \\
            cs^{\prime} & c & a\lambda s & a\lambda \\
            t^{\prime} d/\lambda  & 0 & bt & 0 \\
            s^{\prime} d/\lambda  & d/\lambda & bs & b \\
        \end{bmatrix}.
    \]
    Since $\bm{v_{2}} \cdot \bm{v_{3}}=1$ and $\bm{v_{1}} \cdot \bm{v_{4}}=-1$, we obtain $t=1$ and $t^{\prime}=1$ respectively.
    Hence we obtain $s=s^{\prime}$ as $\bm{v_{1}} \cdot \bm{v_{3}}=0$. Finally we have
    \[
        \Phi_{M} = 
        \begin{bmatrix}
            c & 0 & a\lambda  & 0 \\
            cs & c & a\lambda s & a\lambda \\
            d/\lambda & 0 & b & 0 \\
            s d/\lambda  & d/\lambda & bs & b \\
        \end{bmatrix}
        =
        \begin{bmatrix}
             c & a \lambda \\
             d/\lambda & b \\
         \end{bmatrix}
         \otimes
          \begin{bmatrix}
            1 & 0 \\
            s & 1 \\
        \end{bmatrix}.
    \]
\end{proof}

\begin{cor} \label{SMRFM}
    Let $\Phi_{1}$ and $\Phi_{2}$ be as in \cref{matrix}. Then there exist line bundles $\mathcal{L}_{1}, \mathcal{L}_{2}$ such that 
    \[
        (\Phi_{1} \circ ((-) \otimes \mathcal{L}_{1}))_{M}
        = (((-) \otimes \mathcal{L}_{1}) \circ \Phi_{1})_{M}
        =
        \begin{bmatrix}
                 c & a \lambda_{1} \\
                 d/\lambda_{1} & b \\
        \end{bmatrix}
        \otimes
        \begin{bmatrix}
                1 & 0 \\
                0 & 1 \\
        \end{bmatrix},
    \]
    and similarly,
    \[
        (\Phi_{2} \circ ((-) \otimes \mathcal{L}_{2}))_{M}
        = (((-) \otimes \mathcal{L}_{2}) \circ \Phi_{2})_{M}
        =
        \begin{bmatrix}
                 1 & 0 \\
                 0 & 1 \\
        \end{bmatrix}
        \otimes
        \begin{bmatrix}
                c^{\prime} & a^{\prime} \lambda_{2} \\
                d^{\prime}/\lambda_{2} & b^{\prime} \\
        \end{bmatrix}.
    \]
\end{cor}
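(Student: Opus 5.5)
We need to show that twisting by a suitable line bundle kills the lower-triangular factor. By \cref{matrix},
\[
\Phi_{1,M} = \begin{bmatrix} c & a\lambda_1 \\ d/\lambda_1 & b \end{bmatrix} \otimes \begin{bmatrix} 1 & 0 \\ s & 1 \end{bmatrix}
\]
for some $s \in \mathbb{Z}$. By \cref{LM}, tensoring with $\mathcal{O}\big(\frac{u_1}{\lambda_1}F_1 + \frac{u_2}{\lambda_2}F_2\big)$ acts by
\[
\begin{bmatrix} 1 & 0 \\ u_2 & 1 \end{bmatrix} \otimes \begin{bmatrix} 1 & 0 \\ u_1 & 1 \end{bmatrix}.
\]
Take $u_2 = 0$ and $u_1 = -s$. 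Such a line bundle exists: by \cref{NUM}, $\frac{-s}{\lambda_1}F_1 \in \Num(X)$, and every numerical class lifts to a divisor.

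We then compute both compositions with the mixed-product rule $(A\otimes C)(B\otimes D) = AB \otimes CD$ recorded in the notation section. Since $\Phi \mapsto \Phi_M$ is multiplicative (functoriality of $\ch$ and of the induced action on the numerical Chow group), we get
\[
(\Phi_1\circ(-\otimes\mathcal{L}_1))_M = \Phi_{1,M}\,(-\otimes\mathcal{L}_1)_M = \begin{bmatrix} c & a\lambda_1 \\ d/\lambda_1 & b \end{bmatrix} I_2 \otimes \begin{bmatrix} 1 & 0 \\ s & 1 \end{bmatrix}\begin{bmatrix} 1 & 0 \\ -s & 1 \end{bmatrix},
\]
which equals the claimed $\begin{bmatrix} c & a\lambda_1 \\ d/\lambda_1 & b \end{bmatrix} \otimes I_2$. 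The other order gives the same result, because $I_2$ commutes with the first factor and the two unipotent second factors commute with each other.

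The $\Phi_2$ case is identical with the roles of the tensor factors swapped. Take $u_1 = 0$ and $u_2 = -s'$, using \cref{NUM} to see $\frac{-s'}{\lambda_2}F_2 \in \Num(X)$.

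The only point needing care is the choice of the basis identification $\mathbb{Z}\oplus\Num(X)\oplus\mathbb{Z}\simeq\mathbb{Z}^4$. This ordering of coordinates is what makes the factor of \cref{LM} indexed by $F_1$ the same (second) tensor factor as the $s$-factor of $\Phi_{1,M}$. Both \cref{LM} and \cref{matrix} use the same ordering $(r,s_1,s_2,t)$, so the factors match and there is no real obstacle.
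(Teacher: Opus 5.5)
Your proposal is correct and matches the paper's argument. The paper's proof says only that the corollary follows immediately from \cref{LM} and \cref{matrix}; you spell out that computation by twisting with a line bundle of numerical class $-\frac{s}{\lambda_1}F_1$ (resp.\ $-\frac{s'}{\lambda_2}F_2$), which exists by \cref{NUM}, and then applying the mixed-product rule in both orders of composition.
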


\begin{proof}
    It follows immediately from \cref{LM} and \cref{matrix}.
\end{proof}

%% file: sections/section3/section3_main.tex
\section{The proof of \cref{MT1}} \label{section3}

\input{sections/section3/sub0}

\input{sections/section3/sub4}

\input{sections/section3/sub1}

\input{sections/section3/sub2}

%% file: sections/section3/sub0.tex
We outline the strategy for proving \cref{MT1}. First, we prove the following proposition in \cref{SUB1}.

\newcounter{breakpoint1}
\setcounter{breakpoint1}{\value{thm}}
\begin{prop}\label{Prop:rank}
    Let $X$ be a bielliptic surface and let $Y$ be a surface. Take a point $y \in Y$. Let $\Psi \colon D(Y) \to D(X)$ be an equivalence, and let $\mathcal{E} \coloneqq \Psi(\mathcal{O}_{y})$. Then there exists an autoequivalence $\Phi \in \Auteq D(X)$  which is a composition of relative Fourier--Mukai transforms such that $\rk(\Phi^{-1}(\mathcal{E}))=0$.
\end{prop}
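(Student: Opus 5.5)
Let $v \coloneqq \ch(\mathcal{E}) = (r, \frac{s_1}{\lambda_1}F_1+\frac{s_2}{\lambda_2}F_2, t)$, written in coordinates as $(r,s_1,s_2,t)\in\mathbb{Z}^4$. Since $\mathcal{O}_y$ has Mukai vector $(0,0,0,1)$ and $\Psi$ preserves the Mukai pairing, we get $\langle v,v\rangle_{\mathbb{Z}^4}=2s_1s_2-2rt=0$, so $rt=s_1s_2$. The plan is to decrease $|r|$ by applying inverses of relative Fourier--Mukai transforms along $f_1$ and $f_2$ until the rank vanishes. I will track how they act using \cref{matrix} and \cref{SMRFM}. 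Up to line bundle twists, which do not change the rank, a transform along $f_1$ acts on $\mathbb{Z}^4\simeq\mathbb{Z}^2\otimes\mathbb{Z}^2$ via $\begin{bmatrix} c & a\lambda_1\\ d/\lambda_1 & b\end{bmatrix}\otimes I$, and one along $f_2$ via $I\otimes\begin{bmatrix} c' & a'\lambda_2\\ d'/\lambda_2 & b'\end{bmatrix}$.

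In these coordinates, the condition $rt=s_1s_2$ says exactly that $v$, viewed as a $2\times 2$ matrix $\begin{bmatrix} r & s_1\\ s_2 & t\end{bmatrix}$ (with the ordering dictated by the tensor convention), has determinant zero. Hence $v=u\otimes w$ for primitive-up-to-scalar vectors $u=(u_0,u_1)$ and $w=(w_0,w_1)$, with $r=u_0w_0$ up to a common integer factor. The transform along $f_1$ acts only on one tensor factor and the one along $f_2$ only on the other. So it suffices to show that for each factor there is a matrix $\begin{bmatrix} c & a\lambda_i\\ d/\lambda_i & b\end{bmatrix}$ with $\begin{bmatrix} c & a\\ d & b\end{bmatrix}\in\SL_2(\mathbb{Z})$, $\lambda_i\mid d$ and $a>0$, sending the relevant factor to a vector with first coordinate $0$. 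Killing the first coordinate of just one factor already gives rank $0$.

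This is an $\SL_2$ orbit problem for the congruence-type group $\Gamma(\lambda_i)$ of the introduction: the matrices $\begin{bmatrix} c & a\lambda_i\\ d/\lambda_i & b\end{bmatrix}$ range over the elements of $\SL_2(\mathbb{Z})$ whose upper-right entry is divisible by $\lambda_i$. Given a factor $(u_0,u_1)$ with $\gcd(u_0,u_1)=g$, we need $c\,u_0+a\lambda_i u_1=0$ together with completion to $\SL_2$. Write $u_0=g\alpha$ and $u_1=g\beta$. Taking $c=\lambda_i\beta/e$ and $a=-\alpha/e$ with $e=\gcd(\alpha,\lambda_i)$ (signs adjusted so that $a>0$), we can complete to $\SL_2$ provided $\gcd(c,a\lambda_i)=1$. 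This holds if and only if $\gcd(\alpha,\lambda_i)$ is compatible, i.e. if and only if $\lambda_i\mid u_0/g$ or similar. This divisibility is the main obstacle.

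To overcome it, I will use \cref{RIP}. When $\omega_X\not\simeq\mathcal{O}_X$, the rank of $\Phi(\mathcal{O}_y)$ is divisible by $\ord(\omega_X)$ for every equivalence, and comparing with \cref{table} gives divisibility of $r$ by enough of $\lambda_1\lambda_2$. I will then combine this with integrality of $\Psi(\mathcal{O}_y)$'s Chern character to split the divisibility between the two factors, so that some factor $u$ has $\lambda_i$ dividing $u_0/\gcd$ appropriately. Where a single step does not suffice, I will instead run a Euclidean-type descent: alternately apply $f_1$- and $f_2$-transforms chosen to strictly reduce $|r|$, for example using $\begin{bmatrix} 1 & \lambda_i a\\ 0 & 1\end{bmatrix}$-type moves. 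This terminates because $|r|$ is a nonnegative integer.

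The cases with $\omega_X\simeq\mathcal{O}_X$ (characteristic $2,3$) lack \cref{RIP}. For these I expect to need a separate argument, for instance using that $\lambda_1=1$ in the cyclic types of \cref{table}, so that $f_1$-transforms realize all of $\SL_2(\mathbb{Z})$ and the first factor alone can always be killed. I expect the non-cyclic, trivial-$\omega_X$ case of type $(2,\mu_2)$ to be the hardest, and I would handle it by a direct check.
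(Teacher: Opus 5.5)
\textbf{Verdict.} Your proof has a genuine gap: the non-cyclic type $(2,\mu_2)$ is not handled, and no lattice-level argument can handle it.

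\textbf{What works.} Your tensor reformulation is correct, and it is a tidier way of organizing what the paper does with \cref{ERD}. From $rt=s_1s_2$ and \cref{COP} you get $v=u\otimes w$ with $u,w$ primitive. A transform along $f_1$ acts on $u$ through $\begin{bmatrix} c & a\lambda_1\\ d/\lambda_1 & b\end{bmatrix}$ and fixes $w_0$; symmetrically for $f_2$. A single transform kills the rank exactly when $\lambda_1\mid u_0$ or $\lambda_2\mid w_0$. This settles the cyclic types, where $\lambda_1=1$. For the non-cyclic types with $\omega_X\not\simeq\mathcal{O}_X$, \cref{RIP} gives only $\lambda_2=\ord(\omega_X)\mid u_0w_0$, not divisibility by $\lambda_1\lambda_2$. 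That is still enough, after a case check you should write out:
\begin{itemize}
\item for types $(2,2)$ and $(3,3)$, the prime $\lambda_1=\lambda_2$ divides $u_0$ or $w_0$;
\item for type $(4,2)$, either $2\mid u_0$, or $u_0$ is odd and then $4\mid w_0$.
\end{itemize}

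\textbf{The gap: type $(2,\mu_2)$.} You leave this case to ``a direct check'', with a Euclidean descent on $|r|$ as fallback. Here $\lambda_1=\lambda_2=2$. Every relative Fourier--Mukai transform, every line bundle twist and the shift acts on $\mathbb{Z}^4$ by some $A_1\otimes A_2$. Each factor has even upper-right entry and determinant $1$, so its diagonal entries are odd. Hence the first row of $A_1\otimes A_2$ is $\equiv(1,0,0,0)\pmod 2$, and the parity of the rank is invariant under every composition you are allowed to use. If $r$ is odd, nothing reaches rank $0$. So your claim that $|r|$ can always be strictly decreased is false exactly where you need it.

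\textbf{What is missing.} You need a geometric reason why $\rk\Psi(\mathcal{O}_y)$ is even. This must replace \cref{RIP}, which is unavailable here because $\omega_X$ is trivial. The paper supplies it through \cref{AVA}:
\begin{itemize}
\item If $r$ were odd, transforms along $f_1$ and $f_2$ followed by a twist would move $v$ to $(1,0,0,0)$.
\item By \cref{Prop:not-simple-sheaf} and \cref{BIS}, the images of skyscraper sheaves would then be line bundles up to shift.
\item So the kernel is pulled back from a Poincar\'e bundle, and the equivalence factors through $D$ of the elliptic curve $\underline{\Pic}^0(X)_{\mathrm{red}}$.
\item Its action on $\mathbb{Z}^4$ would then factor through $\mathbb{Z}^2$, contradicting invertibility.
\end{itemize}
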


Take a point $x$ on a bielliptic surface $X$ and $\Psi \in \Auteq D(X)$.
By \cref{Prop:rank}, we can find an equivalence $\Phi$ that is a composition of relative Fourier--Mukai transforms such that 
$\rk((\Phi^{-1} \circ \Psi) (\mathcal{O}_{x}) )=0$. 
Observe that
\[
    \chi(
    (\Phi^{-1} \circ \Psi) (\mathcal{O}_{x}) , 
    (\Phi^{-1} \circ \Psi) (\mathcal{O}_{x}) )  
    = \chi( \mathcal{O}_{x} ,\mathcal{O}_{x} )  
    = 0.
\]
Since $\Num (X)$ is generated by $(1/\lambda_{1}) F_{1}$ and $(1/\lambda_{2})  F_{2}$,  there exist $a \in \mathbb{Q}$, $b \in \mathbb{Z}$  and $i \in \{ 1,2\}$ such that 
\begin{equation}
    (\Phi^{-1} \circ \Psi)^{A}(0,0,1)=(0,a F_{i},b)
    \label{0aFb}.
\end{equation}

Next, we prove the following proposition in \cref{SUB2}.

\newcounter{breakpoint3}
\setcounter{breakpoint3}{\value{thm}}
\begin{prop} \label{Prop:frac}
    Let $X$ be a bielliptic surface. Take $\Psi \in \Auteq D(X)$. Suppose there exist $s,t \in \mathbb{Z}$ and $i \in \{1,2 \}$ such that $\Psi^{A}(0,0,1)= \expar{ 0,\frac{s}{\lambda_{i}} F_{i},t}.$ Then $s \in \lambda_{i}\mathbb{Z}$.
\end{prop}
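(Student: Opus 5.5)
The plan is to show that, after a shift, $\mathcal{E} \coloneqq \Psi(\mathcal{O}_{x})$ is a sheaf supported on a \emph{non-multiple} fiber of $f_{i}$, at least for $x$ general. Then $c_{1}(\mathcal{E})$ is a positive multiple of the full fiber class $F_{i}$, and $s \in \lambda_{i}\mathbb{Z}$ follows immediately. Since $\Psi^{A}$ is constant in $x$ (all $\mathcal{O}_{x}$ have the same Chern character), it suffices to prove this for one well-chosen point $x$.

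\textit{Step 1 (support).} The objects $\mathcal{E}_{x} \coloneqq \Psi(\mathcal{O}_{x})$, $x \in X$, satisfy $\Hom^{\bullet}(\mathcal{E}_{x},\mathcal{E}_{x'}) = 0$ for $x \neq x'$. They have $\Hom(\mathcal{E}_{x},\mathcal{E}_{x}) = k$ and $\Hom^{<0}(\mathcal{E}_{x},\mathcal{E}_{x}) = 0$, and $\chi(\mathcal{E}_{x},\mathcal{E}_{x})=0$. Since $\rk \mathcal{E}_{x} = 0$, all cohomology sheaves of $\mathcal{E}_{x}$ are torsion. I will use the standard argument of Bridgeland--Maciocia type, which works since $K_{X} \equiv 0$ and Serre duality is available. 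Comparing the spectral sequence for $\Hom^{\bullet}(\mathcal{E}_{x},\mathcal{E}_{x})$ with the fact that it is concentrated in degrees $0,1,2$ shows that $\mathcal{E}_{x}$ is a shifted sheaf. Next, $c_{1}(\mathcal{E}_{x}) = \frac{s}{\lambda_{i}}F_{i}$ has square $0$ and pairs to zero with $F_{i}$. By \cref{ED2}, every component $C$ of the support satisfies $C \cdot F_{i} = 0$, so the support is contained in a finite union of fibers of $f_{i}$. If the support were disconnected, $\mathcal{E}_{x}$ would decompose, contradicting $\End = k$. Hence $\mathcal{E}_{x}$ is supported on a single fiber $f_{i}^{-1}(q(x))$, and up to shift we may assume $t$ has a fixed sign.

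\textit{Step 2 (moving the support).} Orthogonality gives a map $x \mapsto q(x)$ to the base curve of $f_{i}$, which should be surjective onto a dense set of points. Otherwise infinitely many $\mathcal{E}_{x}$ would be pairwise orthogonal, fiber-supported sheaves on one fixed fiber with fixed class. That contradicts the fact that $\Psi^{-1}$ of the structure sheaf of that fiber has bounded support, since it is a nonzero object orthogonal to almost all $\mathcal{E}_{x}$. So I can choose $x$ with $q(x)$ a point over which $f_{i}$ is not multiple. This step is automatic for $i=1$, because $f_{1}$ is smooth.

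\textit{Step 3 (conclusion).} For such $x$, the fiber $f_{i}^{-1}(q(x))$ is a smooth elliptic curve of class $F_{i}$. Hence $c_{1}(\mathcal{E}_{x}) = n F_{i}$, where $n$ is the length of $\mathcal{E}_{x}$ at the generic point of the fiber; $n$ is nonzero (after the shift) because $\mathcal{E}_{x}$ is not supported at points, since $\chi(\mathcal{O}_{y},\mathcal{E}_{x})$ considerations and $\Psi$ being an equivalence not sending points to points unless $s=0$. Comparing with $\frac{s}{\lambda_{i}}F_{i}$ in $\Num(X) \otimes \mathbb{Q}$ gives $s = n\lambda_{i}$. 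If $s=0$ the claim is trivial.

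The main obstacle is Step 2: it must rule out that the whole family $\mathcal{E}_{x}$ sits on multiple fibers of $f_{2}$. Those fibers carry the reduced classes $F_{2}/m$, which would allow $s \notin \lambda_{2}\mathbb{Z}$. If the orthogonality argument is delicate, a fallback is to pass to the canonical cover via \cref{LCC} when $\ord(\omega_{X})$ is coprime to $\Char(k)$. There the multiple fibers become smooth, and the divisibility can be read off via \cref{RIP}.
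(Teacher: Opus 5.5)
The gap is your Step 1, which is the real content of the proposition. Steps 2 and 3 agree with the paper once $\Psi(\mathcal{O}_x)$ is known to be a shifted sheaf on a single fiber: if $s\notin\lambda_i\mathbb{Z}$, then $i=2$ and every $\Psi(\mathcal{O}_x)$ would lie on a multiple fiber of $f_2$, which is impossible for an equivalence. Your orthogonality version of Step 2 is fine. The structure sheaf of any non-multiple fiber would be orthogonal to all $\Psi(\mathcal{O}_x)$, so its preimage would be orthogonal to every skyscraper and hence zero. No canonical cover is needed there. Step 1, by contrast, is \cref{Prop:sheaf} in the paper, and almost all of \cref{SUB2} goes into it.

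Two claims in your Step 1 fail. First, $\rk\mathcal{E}_x=0$ does not make the cohomology sheaves torsion. The rank of a complex is $\sum_j(-1)^j\rk\mathcal{H}^j(\mathcal{E}_x)$, so two line bundles in adjacent degrees already give rank $0$. Second, the spectral sequence only gives $\sum_j\dim\Ext^1_X(\mathcal{H}^j,\mathcal{H}^j)\le\dim\Ext^1_X(\mathcal{O}_x,\mathcal{O}_x)=2$. This forces a single nonzero $\mathcal{H}^j$ only if \emph{every} nonzero sheaf $\mathcal{G}$ on $X$ has $\dim\Ext^1_X(\mathcal{G},\mathcal{G})\ge 2$, and ``$K_X\equiv 0$ plus Serre duality'' does not give that.

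\textbf{Case $\omega_X\not\simeq\mathcal{O}_X$.} Here $h^1(\mathcal{O}_X)=1$, so every line bundle $L$ has $\dim\Ext^1_X(L,L)=1$. There are torsion examples too. In characteristic $0$, let $C$ be the reduction of a multiple fiber of $f_2$. Then $\chi(\mathcal{O}_C,\mathcal{O}_C)=0$ and $\Ext^2_X(\mathcal{O}_C,\mathcal{O}_C)\simeq H^0(C,\omega_X|_C)^{\ast}=0$, so $\dim\Ext^1=1$. The count therefore cannot exclude two cohomology sheaves. The paper instead lifts $\Psi$ to the canonical cover via \cref{LCC}; the cover is an abelian surface or a bielliptic surface with trivial $\omega$, equivalences there are sheaf transforms, and the property descends.

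\textbf{Case $\omega_X\simeq\mathcal{O}_X$} (only in characteristic $2,3$). The needed bound amounts to the non-existence of rigid sheaves, including non-torsion ones. The paper proves this in \cref{rigid} using the Harder--Narasimhan filtration, \cref{mukai}, and Koseki's Bogomolov--Gieseker inequality for rigid semistable sheaves in positive characteristic.

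Without an input of one of these kinds, your Step 1, and with it the whole proof, is incomplete.
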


By \cref{Prop:frac}, we know $a \in \mathbb{Z}$ in \cref{0aFb}. Moreover, we deduce that $a\lambda_{i}$ is coprime to $b$ by the next lemma.

\begin{lem} \label{COP}
    Let $X$ be a bielliptic surface. Take $\Psi \in \Auteq D(X)$. If 
    \[
        \Psi^{A}(0,0,1)
        = 
        \expar{r, \frac{s_{1}}{\lambda_{1}}F_{1} + \frac{s_{2}}{\lambda_{2}}F_{2}, t},
    \]
    then $r$, $s_{1}$, $s_{2}$ and $t$ have greatest common divisor 1.
\end{lem}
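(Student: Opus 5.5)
The plan is to use that $\Psi_{M}$ is an automorphism of the lattice $\mathbb{Z}^{4} \simeq \mathbb{Z} \oplus \Num(X) \oplus \mathbb{Z}$ preserving the Mukai pairing $\langle \cdot,\cdot\rangle_{\mathbb{Z}^{4}}$ computed in \cref{Matrix}. The vector $\Psi^{A}(0,0,1)$ is, in the coordinates of \cref{NUM}, the column $\bm{v_{4}} = \Psi_{M}(0,0,0,1) = (r,s_{1},s_{2},t)$. The argument is pure lattice theory, and it rests on two inputs. First, $\Psi_{M}$ lies in $\GL_{4}(\mathbb{Z})$, since $\Psi$ sends the integral lattice to itself and the same holds for $\Psi^{-1}$. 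Second, the pairing $\langle\cdot,\cdot\rangle_{\mathbb{Z}^{4}}$ is unimodular on $\mathbb{Z}^{4}$: in the basis $(1,0,0,0),\dots,(0,0,0,1)$ its Gram matrix is the antidiagonal matrix with entries $-1,1,1,-1$, which has determinant $1$.

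\emph{Main route.} Suppose $g \coloneqq \gcd(r,s_{1},s_{2},t) > 1$. Then $\bm{v_{4}} = g\,\bm{w}$ for some $\bm{w} \in \mathbb{Z}^{4}$. Since $\Psi_{M}$ is invertible over $\mathbb{Z}$,
\[
\Psi_{M}^{-1}\bm{w} = \tfrac{1}{g}(0,0,0,1)
\]
must be an integral vector, which is impossible. This is the most direct argument.

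\emph{Alternative route.} One can also argue by pairing. By \cref{V14}, $\langle \bm{v_{1}}, \bm{v_{4}}\rangle_{\mathbb{Z}^{4}} = -1$, and $\bm{v_{1}} = \Psi_{M}(1,0,0,0)$ is integral. On the other hand, $\langle \bm{v_{1}}, \bm{v_{4}}\rangle_{\mathbb{Z}^{4}} = g\langle \bm{v_{1}}, \bm{w}\rangle_{\mathbb{Z}^{4}} \in g\mathbb{Z}$. Hence $g$ divides $1$, so $g = 1$.

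\emph{The one point to justify.} Both routes need $\Psi_{M}$ to be an integral matrix, i.e. that $\Psi^{A}$ really preserves $\mathbb{Z}\oplus\Num(X)\oplus\mathbb{Z}$. Here is why it does. The Mukai vector equals $\ch$, because $\td(X)=1$ in the numerical ring: $K_{X}\equiv 0$ and $\chi(\mathcal{O}_{X})=0$. Every element of the lattice is $\ch$ of some object, because rank, numerical first Chern class and degree are realized integrally by combinations of line bundles and skyscraper sheaves. For a surface with $K\equiv 0$, $\ch_{2}$ of a line bundle is $D^{2}/2$, which is an integer since $\Num(X)$ is even by \cref{ED2}. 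Applying $\Psi$ to such objects yields objects whose $\ch$ is again in the lattice. So $\Psi_{M}$ is integral, as is $(\Psi^{-1})_{M}$, and the argument above applies. I expect no serious obstacle beyond this integrality check, which the paper already uses implicitly in \cref{Matrix}.
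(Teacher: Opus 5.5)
Your proposal is correct. Your ``alternative route'' is exactly the paper's proof: the paper expands $-1=\langle \Psi^{A}(0,0,1),\Psi^{A}(1,0,0)\rangle$ as the integer combination $-rt'-tr'+s_{1}s_{2}'+s_{2}s_{1}'$ and concludes that $\gcd(r,s_{1},s_{2},t)$ divides $1$.

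Your ``main route'' does not use the isometry property at all. It only uses that $\Psi_{M}^{-1}=(\Psi^{-1})_{M}$ is integral, i.e.\ that $\Psi_{M}\in\GL_{4}(\mathbb{Z})$ maps primitive vectors to primitive vectors, and it is equally valid.

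The unimodularity of $\langle\cdot,\cdot\rangle_{\mathbb{Z}^{4}}$ that you mention is not actually needed in either argument.
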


\begin{proof}
    Let 
    $\Psi^{A}(1,0,0)= (r^{\prime}, (s_1^{\prime} / \lambda_{1}) F_{1} + (s_2^{\prime}/\lambda_{2}) F_{2}, t^{\prime})$. Then we have
    \begin{eqnarray*}
        -1 
        &=& 
        \langle (0,0,1), (1,0,0) \rangle \\
        &=& 
        \exgen{
        \expar{ r, \frac{s_1}{\lambda_{1}}F_{1} + \frac{s_2}{\lambda_{2}}F_{2}, t},
        \expar{ r^{\prime}, \frac{s_1^{\prime}}{\lambda_{1}}F_{1} + \frac{s_2^{\prime}}{\lambda_{2}}F_{2}, t^{\prime}}
        } \\
        &=&
        -rt^{\prime} -tr^{\prime} + s_{1}s_{2}^{\prime} +s_{2}s_{1}^{\prime}.
    \end{eqnarray*}
    This proves the statement.
\end{proof}

Hence, by \cref{RFM}, there exists a relative Fourier--Mukai transform $\Phi^{\prime}$ along $f_{i}$ such that $\Phi^{\prime A}(0,0,1) = (0,aF_{i}, b)$. Then we obtain
\[
    (\Phi^{\prime -1} \circ \Phi^{-1} \circ \Psi)^{A} (0,0,1) = (0,0,1).
\]

Finally, we prove the following proposition in \cref{SUB2}.

\newcounter{breakpoint5}
\setcounter{breakpoint5}{\value{thm}}
\begin{prop} \label{Prop:sheaf}
    Let $X$ be a bielliptic surface and take $\Phi \in \Auteq D(X)$. Then $\Phi$ is a sheaf transform, i.e. for any point $x \in X$, $\Phi(\mathcal{O}_{x})$ is a shift of a sheaf. 
\end{prop}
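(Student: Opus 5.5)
We plan to derive \cref{Prop:sheaf} from Bridgeland's criterion on surfaces: an equivalence whose image of a point is a complex is controlled by the torsion and locally free parts of its cohomology sheaves. We work throughout with $\mathcal{E} \coloneqq \Phi(\mathcal{O}_{x})$. We know $\Hom(\mathcal{E},\mathcal{E}) = k$, $\Hom^{i}(\mathcal{E},\mathcal{E})=0$ for $i<0$, and $\chi(\mathcal{E},\mathcal{E})=0$. Since $\omega_X$ is numerically trivial, Serre duality gives $\Hom^{i}(\mathcal{E},\mathcal{E}) \simeq \Hom^{2-i}(\mathcal{E},\mathcal{E}\otimes\omega_{X})^{\vee}$.

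\textbf{Step 1 (Mukai's lemma for the cohomology sheaves).} Let $\mathcal{H}^{p}$ be the cohomology sheaves of $\mathcal{E}$. We use the spectral sequence $E_{2}^{p,q}=\bigoplus_{i}\Hom^{p}(\mathcal{H}^{i},\mathcal{H}^{i+q}) \Rightarrow \Hom^{p+q}(\mathcal{E},\mathcal{E})$. Applying the standard argument of \cite[Lemma 4.5]{MR1629929} (or Huybrechts' Lemma 3.9), the extremal cohomology sheaves $\mathcal{H}^{\min}$ and $\mathcal{H}^{\max}$ must satisfy $\Hom(\mathcal{H}^{\max},\mathcal{H}^{\min}\otimes \omega_X)=0$ whenever $\min<\max$. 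This is obtained by composing a nonzero map into a negative-degree self-extension.

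Equivalently, we can run the argument of Bridgeland's Theorem on surfaces: for an equivalence with kernel $\mathcal{P}$, the object $\mathcal{E}$ is a sheaf up to shift provided $\omega_X\simeq \omega$-twists act trivially on the relevant Hom spaces. Concretely, we aim to show $\mathcal{H}^{\max}\otimes\omega_X \simeq \mathcal{H}^{\max}$-like stability, and then derive a contradiction from $\Hom(\mathcal{H}^{\max},\mathcal{H}^{\min}\otimes\omega_X)=0$ for two sheaves supported in a common fiber structure.

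\textbf{Step 2 (reduction to families).} Rather than treating a single point, we consider the kernel $\mathcal{P}$ and use the flatness criterion. The set of $x$ for which $\Phi(\mathcal{O}_x)$ is a sheaf in degree $0$ is open. Using the preceding reductions in the paper, we first compose $\Phi$ with relative Fourier--Mukai transforms and line bundle twists so that $\Phi^{A}(0,0,1)=(0,0,1)$ (\cref{Prop:rank}, \cref{Prop:frac}, \cref{COP}). Then it suffices to treat this normalized case, since relative Fourier--Mukai transforms are themselves sheaf transforms with flat kernels (\cref{RFM}). In the normalized case $\ch(\mathcal{E})=(0,0,1)$, and we argue as follows. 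The support of $\mathcal{E}$ is a finite set, since a one-dimensional support component would contribute, through the alternating sum of $\ch_{1}$ of its cohomology sheaves, an effective class. That class must cancel, and we will rule out cancellation using \cref{ED2} together with the vanishing $\Hom(\mathcal{H}^{\max},\mathcal{H}^{\min}\otimes\omega_X)=0$. Once $\mathcal{E}$ is supported at a point with $\Hom^{<0}=0$ and $\Hom(\mathcal{E},\mathcal{E})=k$, the standard argument gives $\mathcal{E}\simeq\mathcal{O}_{y}[n]$.

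\textbf{Main obstacle.} The hard step is excluding one-dimensional support with cancelling classes. Two sheaves supported on fibers of $f_i$ can have cancelling $\ch_1$, and nonvanishing Homs between them are not automatic when $\omega_X$ is a nontrivial torsion bundle. We would first try to pass to the canonical cover via \cref{LCC}, where $\omega$ is trivial and Mukai's argument applies directly, and then descend the conclusion using $p^{\ast}$. If $\omega_X\simeq\mathcal{O}_X$, no cover is needed.
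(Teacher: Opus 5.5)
\textbf{There is a genuine gap, in two places.}

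First, the normalization in Step~2 is circular. In the paper, \cref{Prop:frac} is deduced \emph{from} \cref{Prop:sheaf}. The argument of \cref{frac_not_simple} needs $\Psi(\mathcal{O}_{x})$ to be a shifted sheaf, so that $\ch_{1}$ is a connected effective divisor; when $\lambda_{2}\nmid s$ that divisor would have to lie in the multiple fibers of $f_{2}$, giving the contradiction. Without \cref{Prop:frac}, \cref{Prop:rank} only gives $\ch(\mathcal{E})=(0,\frac{s}{\lambda_{i}}F_{i},t)$ with possibly $\lambda_{i}\nmid s$. The relative Fourier--Mukai transforms of \cref{RFM} only produce classes $(0,aF_{i},b)$ with $a\in\mathbb{Z}$, so you cannot reach $(0,0,1)$.

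Second, even granting the normalization, the decisive step is exactly the ``main obstacle'' you leave open. The condition $\ch(\mathcal{E})=(0,0,1)$ does not rule out cohomology sheaves with cancelling one-dimensional classes. The vanishing $\Hom(\mathcal{H}^{\max},\mathcal{H}^{\min}\otimes\omega_{X})=0$ from Step~1 is correct, but it gives no contradiction by itself. Appealing to ``Mukai's argument on the canonical cover'' does not settle it either. In characteristics $2$ and $3$, the cover (or $X$ itself) can be a bielliptic surface with trivial canonical bundle. The abelian-surface results do not apply there, and you give no argument for that case.

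\textbf{The missing idea} is an $\Ext^{1}$-count that needs neither normalization nor any support analysis.

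Suppose $\omega_{X}\simeq\mathcal{O}_{X}$. Then every nonzero coherent sheaf $\mathcal{F}$ satisfies $\dim\Ext^{1}(\mathcal{F},\mathcal{F})\ge 2$; this is \cref{rigid} and its proof.
\begin{itemize}
\item For torsion $\mathcal{F}$, this follows from $2\dim\Hom(\mathcal{F},\mathcal{F})-\dim\Ext^{1}(\mathcal{F},\mathcal{F})=\chi(\mathcal{F},\mathcal{F})=-D^{2}\le 0$ with $D$ effective (\cref{ED2}).
\item In general, split off the torsion and pass to the last Harder--Narasimhan factor of the torsion-free part via \cref{mukai}. For that $\mu$-semistable sheaf, the Bogomolov--Gieseker inequality in positive characteristic \cite{10.1093/imrn/rnac260} together with Riemann--Roch gives $\dim\Ext^{1}\ge 2\dim\Hom\ge 2$.
\end{itemize}
Now use the spectral sequence $E_{2}^{p,q}=\bigoplus_{i}\Ext^{p}(\mathcal{H}^{i},\mathcal{H}^{i+q})\Rightarrow\Ext^{p+q}(\mathcal{E},\mathcal{E})$. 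The term $E_{2}^{1,0}$ survives, because $E_{2}^{p,q}=0$ for $p<0$ and $p>2$. Hence $\sum_{i}\dim\Ext^{1}(\mathcal{H}^{i},\mathcal{H}^{i})\le\dim\Ext^{1}(\mathcal{O}_{x},\mathcal{O}_{x})=2$. Combined with the lower bound, this forces a single nonzero cohomology sheaf.

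For $\omega_{X}\not\simeq\mathcal{O}_{X}$, your idea of lifting via \cref{LCC} is the right one. The lift $\tilde{\Phi}$ is a sheaf transform: by the argument above if $\tilde{X}$ is bielliptic, and by Orlov if it is abelian. Since the canonical cover $p$ is \'{e}tale, $p^{\ast}\Phi(\mathcal{O}_{x})\simeq\bigoplus_{p(\tilde{x})=x}\tilde{\Phi}(\mathcal{O}_{\tilde{x}})$ is a shifted sheaf with a common shift. Therefore $\Phi(\mathcal{O}_{x})$ is a shifted sheaf.
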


By \cref{Prop:sheaf}, $\Phi^{\prime -1} \circ \Phi^{-1} \circ \Psi$ is a sheaf transform.
Thus there exists a point $y \in X$ such that $\Phi^{\prime -1} \circ \Phi^{-1} \circ \Psi (\mathcal{O}_{x}) \simeq \mathcal{O}_{y}$ up to shift.
Then we apply the following proposition.

\begin{prop}[{\cite[Corollary 5.23]{MR2244106}}] \label{JOS}
    Let $X$ and $Y$ be varieties. Suppose $\Phi \colon D(X) \simeq D(Y)$ is an equivalence such that for any $x \in X$ there exists $f(x) \in Y$ with
    \[
        \Phi(\mathcal{O}_{x}) \simeq \mathcal{O}_{f(x)}.
    \]
    Then $f \colon X \to Y$ defines an isomorphism and $\Phi$ is the composition of $f_{*}$ with the twist by some line bundle $M \in \rm{Pic}(Y)$, i.e.
    \[
        \Phi \simeq (M \otimes (-) ) \circ f_{*}.
    \]
\end{prop}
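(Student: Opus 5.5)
Since this proposition is quoted from \cite[Corollary 5.23]{MR2244106}, I would reproduce the standard kernel argument. By \cite[Theorem 3.2.1]{MR1998775}, $\Phi \simeq \Phi_{\mathcal{P}}$ for some $\mathcal{P} \in D(X \times Y)$. I treat $\mathcal{P}$ as a family over $X$ and use base change: the derived restriction $\mathbb{L}i_{x}^{\ast}\mathcal{P}$ to $\{x\} \times Y$ computes $\Phi(\mathcal{O}_{x}) \simeq \mathcal{O}_{f(x)}$. In particular it is a sheaf concentrated in degree $0$ for every closed point $x$.

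\textbf{Step 1: $\mathcal{P}$ is a flat sheaf.} The first step is the usual lemma that an object of $D(X \times Y)$ whose derived fibres over every point of $X$ are sheaves is itself a sheaf, flat over $X$. To prove it, I would take the highest nonzero cohomology sheaf $\mathcal{H}^{q}(\mathcal{P})$, pick a point $x$ in the image of its support, and use right exactness of $i_{x}^{\ast}$. This shows $\mathcal{H}^{q}(\mathbb{L}i_{x}^{\ast}\mathcal{P}) \neq 0$, so $q = 0$. Descending induction with the spectral sequence then shows that the lower cohomology sheaves vanish, and that $\mathcal{P}$ has no higher $\mathrm{Tor}$ against $\mathcal{O}_{x}$. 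Flatness over $X$ then follows from the local criterion. I expect this step to be the main technical obstacle; the remaining steps are formal.

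\textbf{Step 2: $\mathcal{P}$ is a line bundle on a graph.} Let $\Gamma \subset X \times Y$ be the scheme-theoretic support of $\mathcal{P}$. Each fibre of $\Gamma \to X$ is the single reduced point $f(x)$, because $\mathcal{P}|_{\{x\}\times Y} \simeq \mathcal{O}_{f(x)}$. So $\mathcal{P}$ is a coherent sheaf flat over $X$ with fibres of length one. The projection $p_{X}|_{\Gamma}$ is therefore proper and quasi-finite, hence finite, and $p_{X\ast}\mathcal{P}$ is locally free of rank $1$. Since $\mathcal{P}$ is a faithful module over $\mathcal{O}_{\Gamma}$, the algebra $\mathcal{O}_{\Gamma}$ embeds into $\mathcal{E}nd(p_{X\ast}\mathcal{P}) = \mathcal{O}_{X}$, so $\Gamma \to X$ is an isomorphism. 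Thus $\Gamma$ is the graph of a morphism $f \colon X \to Y$ agreeing with the given map on points, and $\mathcal{P} \simeq (\mathrm{id}, f)_{\ast}L$ for a line bundle $L$ on $X$. The projection formula then gives
\[
\Phi \simeq f_{\ast}(L \otimes (-)).
\]

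\textbf{Step 3: $f$ is an isomorphism.}
\begin{itemize}
\item \emph{Injectivity.} $f$ is injective on points because $\Hom(\mathcal{O}_{x}, \mathcal{O}_{x'}) = 0$ for $x \neq x'$ and $\Phi$ is fully faithful.
\item \emph{Surjectivity.} $f(X)$ is closed. If some $y \notin f(X)$, then $\mathcal{O}_{y}$ is orthogonal to every $f_{\ast}(\,\cdot\,)$, contradicting essential surjectivity. So $f$ is surjective.
\item \emph{Inverse.} Apply Steps 1–2 to $\Phi^{-1}$, which sends $\mathcal{O}_{f(x)}$ to $\mathcal{O}_{x}$. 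This gives a morphism $g \colon Y \to X$ with $g \circ f = \mathrm{id}$ on points. Since the varieties are reduced, $g \circ f = \mathrm{id}$, and symmetrically $f \circ g = \mathrm{id}$.
\end{itemize}
Finally, put $M \coloneqq f_{\ast}L = g^{\ast}L$. Then $f_{\ast}(L \otimes E) \simeq M \otimes f_{\ast}E$, which is the asserted form $\Phi \simeq (M \otimes (-)) \circ f_{\ast}$.
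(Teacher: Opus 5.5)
Your proposal is correct and is essentially the standard argument behind \cite[Corollary 5.23]{MR2244106}, which the paper cites without reproving: the kernel is a sheaf flat over $X$ by the usual base-change lemma, that sheaf is a line bundle on the graph of a morphism, and the equivalence then forces $f$ to be an isomorphism. Two points need tightening, neither of which breaks the argument. In Step 1 you must first obtain $L_{1}i_{x}^{\ast}\mathcal{H}^{0}(\mathcal{P})=0$, and hence flatness of $\mathcal{H}^{0}(\mathcal{P})$, before killing the lower cohomology sheaves, because otherwise the higher Tor terms $L_{j}i_{x}^{\ast}\mathcal{H}^{0}(\mathcal{P})$ could cancel $i_{x}^{\ast}\mathcal{H}^{1-j}(\mathcal{P})$ through the differentials. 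In Step 2 the scheme-theoretic fibres of $\Gamma \to X$ are not a priori reduced; you only use that they are single points set-theoretically, and your faithfulness argument yields $\Gamma \simeq X$ anyway.
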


Therefore, for a given $\Psi \in \Auteq D(X)$, we can find relative 
Fourier--Mukai transforms $\Phi$, $\Phi^{\prime}$ such that $\Phi^{\prime -1} \circ \Phi^{-1} \circ \Psi$ is a standard autoequivalence. 
This completes the proof of \cref{MT1}.

%% file: sections/section3/sub4.tex
\subsection{Auxiliary results}

Here we collect the lemmas needed in the next section.

\begin{lem} \label{AFU5}
    Let $X$ be a surface. Let $\mathcal{E} \in \Coh(X)$ be a simple sheaf, i.e. $\Hom_{X} (\mathcal{E},\mathcal{E}) \simeq k$.
    Assume $\rk(\mathcal{E}) \neq 0$ and $\dim \Supp (\Tor(\mathcal{E})) \le 0$. 
    Then $\mathcal{E}$ is torsion-free.
\end{lem}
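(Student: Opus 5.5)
The plan is to argue by contradiction. Suppose $\mathcal{E}$ is not torsion-free, and set $\mathcal{T} \coloneqq \Tor(\mathcal{E})$. By hypothesis $\mathcal{T}$ is a nonzero sheaf supported in dimension at most $0$. The idea is to produce a nonzero endomorphism of $\mathcal{E}$ that is not an isomorphism. Such an endomorphism would contradict $\Hom_{X}(\mathcal{E},\mathcal{E}) \simeq k$, since in $k$ every nonzero element is invertible.

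First consider the composition
\[
    \mathcal{E} \twoheadrightarrow ? \to \mathcal{T} \hookrightarrow \mathcal{E},
\]
where the middle map should be a nonzero morphism $\mathcal{E} \to \mathcal{T}$. Any such composite is nonzero if the map $\mathcal{E} \to \mathcal{T}$ is nonzero. It is also not an isomorphism, because its image lies in $\mathcal{T}$, and $\mathcal{T} \neq \mathcal{E}$ since $\rk(\mathcal{E}) \neq 0$. So the key step is to show that $\Hom(\mathcal{E},\mathcal{T}) \neq 0$. Because $\mathcal{T}$ has $0$-dimensional support, it has a nonzero quotient $\mathcal{T} \twoheadrightarrow \mathcal{O}_{x}$ for some point $x \in \Supp(\mathcal{T})$. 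It therefore suffices to find a nonzero map $\mathcal{E} \to \mathcal{O}_{x}$ and then lift or compose it appropriately.

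A cleaner choice is to use $\mathcal{E} \to \mathcal{E} \otimes k(x) \to k(x)$, which is nonzero by Nakayama because $x \in \Supp(\mathcal{E})$. Composing with an inclusion $k(x) \hookrightarrow \mathcal{T}$, which exists because $\mathcal{T}$ is a nonzero Artinian module at $x$ and so has a nonzero socle element, gives the endomorphism
\[
    \mathcal{E} \to k(x) \hookrightarrow \mathcal{T} \hookrightarrow \mathcal{E}.
\]
This map is nonzero, and its image is $k(x)$, which has rank $0$. Hence it is not an isomorphism, since $\rk(\mathcal{E}) \neq 0$. This contradicts simplicity, so $\mathcal{T} = 0$.

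The only points needing care are the existence of the socle embedding $\mathcal{O}_{x} \hookrightarrow \mathcal{T}$ and the nonvanishing of the fiber $\mathcal{E} \otimes k(x)$. The first holds because a finite-length module over a local ring has a nonzero element annihilated by the maximal ideal. The second holds because $\mathcal{E}_{x} \neq 0$, as it contains $\mathcal{T}_{x} \neq 0$, so Nakayama applies. I expect no serious obstacle here. The rank hypothesis is what guarantees that the endomorphism is not invertible, and the dimension hypothesis on $\Tor(\mathcal{E})$ is what supplies a skyscraper subsheaf.
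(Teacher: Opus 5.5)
Your proof is correct and is essentially the paper's argument. The paper deduces $\Hom_{X}(\mathcal{E},\mathcal{T})=0$ from simplicity via the exact sequence $0 \to \Hom_{X}(\mathcal{E},\mathcal{T}) \to \Hom_{X}(\mathcal{E},\mathcal{E}) \to \Hom_{X}(\mathcal{E},\mathcal{E}/\mathcal{T})$, using $\rk(\mathcal{E}) \neq 0$ to see that the last map is injective, and then contradicts this with the same map $\mathcal{E} \twoheadrightarrow \mathcal{E}\otimes\mathcal{O}_{x} \twoheadrightarrow \mathcal{O}_{x} \hookrightarrow \mathcal{T}$ that you use; you instead compose with $\mathcal{T} \hookrightarrow \mathcal{E}$ to get a nonzero non-invertible endomorphism directly, and your justifications of the socle embedding and of $\mathcal{E}\otimes k(x) \neq 0$ fill in details the paper leaves implicit (drop the aside about a quotient $\mathcal{T} \twoheadrightarrow \mathcal{O}_{x}$, which plays no role).
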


\begin{proof}
    Let $\mathcal{T} \coloneqq \Tor(\mathcal{E})$.
    Consider the following exact sequence:
    \[
        0 \to \Hom_{X}(\mathcal{E}, \mathcal{T})
        \to \Hom_{X}(\mathcal{E}, \mathcal{E})
        \to \Hom_{X}(\mathcal{E}, \mathcal{E} / \mathcal{T}).
    \]
    Since $\mathcal{E}$ is a simple sheaf, the map 
    $\Hom_{X}(\mathcal{E}, \mathcal{E})
    \to \Hom_{X}(\mathcal{E}, \mathcal{E} / \mathcal{T})$
    is injective, and hence $\Hom_{X}(\mathcal{E}, \mathcal{T})=0$.

    On the other hand, if $\mathcal{T} \neq 0$, then $\Hom_{X}(\mathcal{E}, \mathcal{T}) \neq 0$. Indeed, for $x \in \Supp (\mathcal{T})$, we can construct a non-zero morphism
    \[
        \mathcal{E} \twoheadrightarrow \mathcal{E} \otimes \mathcal{O}_{x} \twoheadrightarrow \mathcal{O}_{x} \hookrightarrow \mathcal{T}.
    \]
\end{proof}

\begin{lem} \label{AFU3}
    Let $X$ be a surface. Let $\mathcal{E}$ be a  torsion-free sheaf with $\ch (\mathcal{E}) = (1,0,0)$. Then $\mathcal{E}$ is an invertible sheaf.
\end{lem}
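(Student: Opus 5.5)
The standard approach is to compare $\mathcal{E}$ with its double dual. Since $\mathcal{E}$ is torsion-free of rank $1$ on a smooth surface, the natural map $\mathcal{E} \to \mathcal{E}^{\vee\vee}$ is injective. The double dual $\mathcal{E}^{\vee\vee}$ is reflexive, and on a smooth surface a reflexive sheaf is locally free. Hence $\mathcal{L} \coloneqq \mathcal{E}^{\vee\vee}$ is a line bundle, and we obtain a short exact sequence
\[
    0 \to \mathcal{E} \to \mathcal{L} \to \mathcal{Q} \to 0.
\]
The cokernel $\mathcal{Q}$ has support of dimension $\le 0$, because $\mathcal{E}$ and $\mathcal{L}$ agree away from the finitely many points where $\mathcal{E}$ fails to be locally free. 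Consequently $\mathcal{E} \simeq \mathcal{I}_{Z} \otimes \mathcal{L}$ for a zero-dimensional subscheme $Z$ of length $\ell \coloneqq h^{0}(\mathcal{Q})$.

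Next, use additivity of the Chern character in this sequence. Since $\ch(\mathcal{Q}) = (0,0,\ell)$, we get
\[
    \ch(\mathcal{E}) = \ch(\mathcal{L}) - (0,0,\ell) = \Bigl(1, c_{1}(\mathcal{L}), \tfrac{1}{2}c_{1}(\mathcal{L})^{2} - \ell\Bigr).
\]
The hypothesis $\ch(\mathcal{E}) = (1,0,0)$ then gives $c_{1}(\mathcal{L}) = 0$ in the relevant group and $\tfrac{1}{2}c_{1}(\mathcal{L})^{2} - \ell = 0$. This forces $c_{1}(\mathcal{L})^{2} = 0$, and therefore $\ell = 0$. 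So $\mathcal{Q} = 0$ and $\mathcal{E} \simeq \mathcal{L}$ is invertible.

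The only step needing real care is the reflexivity and local freeness input: that the double dual of a rank-one torsion-free sheaf on a smooth surface is a line bundle, and that the cokernel has finite support. Both are standard for smooth surfaces over any algebraically closed field, since normality plus codimension-two extension suffices. The rest is bookkeeping with $\ch_{2}$, where the only point is that $\ch_{2}(\mathcal{Q})$ equals the length of $\mathcal{Q}$ for zero-dimensional sheaves. In this paper's convention, where $\ch$ lands in $\mathbb{Z}\oplus\Num(X)\oplus\mathbb{Z}$ via the degree map, the vanishing $c_{1}=0$ is numerical. That is enough, because only $c_{1}^{2}$ enters the computation of $\ell$.
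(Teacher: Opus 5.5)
Your proposal is correct and follows essentially the same route as the paper: embed $\mathcal{E}$ into the invertible sheaf $\mathcal{E}^{\vee\vee}$, note that the cokernel is supported in dimension $\le 0$, and use additivity of $\ch$ to force the cokernel to vanish. You are more explicit than the paper about why $\ch_{2}(\mathcal{E}^{\vee\vee}) = \tfrac{1}{2}c_{1}(\mathcal{E}^{\vee\vee})^{2} = 0$ once $c_{1}$ is numerically trivial, and that detail is correctly handled.
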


\begin{proof}
    Note that there exists an open set $U$ such that $\mathcal{E} |_{U}$ is locally free with $\codim (X \backslash U) = 2$. Since $\mathcal{E}$ is a torsion-free sheaf, there is a natural injection $\varphi \colon \mathcal{E} \hookrightarrow \mathcal{E}^{\vee \vee}$. Then $\varphi |_{U}$ is an isomorphism. 
    Note that $\mathcal{E}^{\vee \vee}$ is an invertible sheaf since $\rk (\mathcal{E}) =1$.

    Consider the exact sequence
    \[
    0 \to \mathcal{E} \to \mathcal{E}^{\vee \vee} \to \mathcal{E}^{\vee \vee} / \mathcal{E} \to 0.
    \]
    Since $\dim \Supp \mathcal{E}^{\vee \vee} / \mathcal{E} \le 0$, we have $\ch_{1} ( \mathcal{E}^{\vee \vee} / \mathcal{E} ) = 0$. 
    It follows that $\ch_{1} ( \mathcal{E}^{\vee \vee}) = 0$ since $\ch_{1} ( \mathcal{E}) = 0$. Thus we obtain $\ch (\mathcal{E}^{\vee \vee}) = (1,0,0)$ and $\ch (\mathcal{E}^{\vee \vee} / \mathcal{E}) = (0,0,0)$. 
    Hence $\mathcal{E}^{\vee \vee} / \mathcal{E} =0$ and $\mathcal{E} \simeq \mathcal{E}^{\vee \vee}$.  Since $\mathcal{E}^{\vee \vee}$ is invertible, so is  $\mathcal{E}$.
\end{proof}

\begin{lem} \label{NRT}
    Let $X$ be a bielliptic  
    with $\omega_{X} \simeq \mathcal{O}_{X}$.
    Then there exists no rigid torsion sheaf.
\end{lem}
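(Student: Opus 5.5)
The plan is to compare two computations of the Euler form $\chi(\mathcal{T},\mathcal{T})$ for a hypothetical rigid torsion sheaf $\mathcal{T}$: one from Serre duality, using $\omega_{X}\simeq\mathcal{O}_{X}$, and one from Riemann--Roch, using the formula for $\chi$ on bielliptic surfaces from the subsection on the Euler form.

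Suppose $\mathcal{T}\neq 0$ is a torsion sheaf with $\Ext^{1}_{X}(\mathcal{T},\mathcal{T})=0$. Since $\dim X=2$, we have $\Ext^{i}_{X}(\mathcal{T},\mathcal{T})=0$ for $i\notin\{0,1,2\}$. Because $\omega_{X}\simeq\mathcal{O}_{X}$, Serre duality gives $\Ext^{2}_{X}(\mathcal{T},\mathcal{T})\simeq\Hom_{X}(\mathcal{T},\mathcal{T})^{\vee}$. Therefore
\[
    \chi(\mathcal{T},\mathcal{T}) = 2\dim\Hom_{X}(\mathcal{T},\mathcal{T}) - \dim\Ext^{1}_{X}(\mathcal{T},\mathcal{T}) = 2\dim\Hom_{X}(\mathcal{T},\mathcal{T}) \ge 2,
\]
where the inequality holds because $\mathrm{id}_{\mathcal{T}}\neq 0$.

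On the other hand, $\ch_{0}(\mathcal{T})=0$, so the Riemann--Roch formula for bielliptic surfaces reduces to
\[
    \chi(\mathcal{T},\mathcal{T}) = -\ch_{1}(\mathcal{T})^{2}.
\]
The key observation is that $\ch_{1}(\mathcal{T})=c_{1}(\mathcal{T})$ is the class of an effective divisor, or zero. Concretely, it is the divisor $\sum_{C}\ell_{C}(\mathcal{T})\,C$, summed over the one-dimensional components $C$ of $\Supp(\mathcal{T})$, where $\ell_{C}(\mathcal{T})$ is the length of $\mathcal{T}$ at the generic point of $C$. If $\Supp(\mathcal{T})$ is zero-dimensional, this class is $0$. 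In either case, \cref{ED2} gives $\ch_{1}(\mathcal{T})^{2}\ge 0$, hence $\chi(\mathcal{T},\mathcal{T})\le 0$. This contradicts the bound $\chi(\mathcal{T},\mathcal{T})\ge 2$ from the previous paragraph, so no rigid torsion sheaf exists.

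The only step requiring care is the identification of $c_{1}(\mathcal{T})$ with an effective cycle for an arbitrary torsion sheaf. I would justify it via a filtration of $\mathcal{T}$ whose graded pieces are of the form $\mathcal{O}_{C}$ twisted by line bundles on integral curves $C$, or are supported in dimension $0$. Each such piece contributes either $[C]$ or $0$ to $c_{1}$, and $c_{1}$ is additive in short exact sequences. Everything else is a direct application of Serre duality, the Riemann--Roch formula stated earlier in the paper, and \cref{ED2}.
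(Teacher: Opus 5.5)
Your proposal is correct and follows the paper's proof: Serre duality with trivial $\omega_X$ gives $\chi(\mathcal{T},\mathcal{T}) = 2\dim\Hom_X(\mathcal{T},\mathcal{T}) \ge 2$, while Riemann--Roch gives $\chi(\mathcal{T},\mathcal{T}) = -D^2$, which is at most $0$ by \cref{ED2}. Your justification that $D = \ch_1(\mathcal{T})$ is effective, or zero, makes explicit a step the paper uses without comment.
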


\begin{proof}
    Assume that $\mathcal{T}$ is a rigid torsion sheaf,
    and set $\ch (\mathcal{T}) = (0,D,t)$. Then we have
    \begin{eqnarray*}
        - D^{2} 
        &=& - \langle (0,D,t),(0,D,t) \rangle \\
        &=& \chi (\mathcal{T},\mathcal{T}) \\
        &=& 2 \dim \Hom_{X} (\mathcal{T}, \mathcal{T}) - \dim \Ext_{X}^{1} (\mathcal{T}, \mathcal{T}) \\
        &=& 2 \dim \Hom_{X} (\mathcal{T}, \mathcal{T}) \\
        &\ge& 2.
    \end{eqnarray*}
    This gives $D^{2} \le -2$, which contradicts \cref{ED2}.
\end{proof}

\begin{lem} [{\cite[Lemma 2.2]{S_A_Kuleshov_1995}}] \label{mukai}
    Let $X$ be a surface.  
    Suppose that we have an exact sequence
    \[
        0 \to \mathcal{G}_{2} \to \mathcal{E} \to \mathcal{G}_{1} \to 0
    \]
    for coherent sheaves $\mathcal{E}$, $\mathcal{G}_{1}$ and $\mathcal{G}_{2}$ on $X$ satisfying 
    \[
        \Hom_{X} (\mathcal{G}_{2}, \mathcal{G}_{1}) 
        = \Ext_{X}^{2}(\mathcal{G}_{1}, \mathcal{G}_{2}) =0.
    \]
    Then
    \[
        \dim \Ext^{1}_{X}(\mathcal{E},\mathcal{E}) 
        \ge 
        \dim \Ext^{1}_{X}(\mathcal{G}_{1},\mathcal{G}_{1}) 
        + 
        \dim \Ext^{1}_{X}(\mathcal{G}_{2},\mathcal{G}_{2}).
    \]
\end{lem}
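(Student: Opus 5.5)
This is Kuleshov's lemma (\cref{mukai}), cited from \cite[Lemma 2.2]{S_A_Kuleshov_1995}. The plan is to compare first-order deformations of $\mathcal{E}$ with those of $\mathcal{G}_{1}$ and $\mathcal{G}_{2}$ through the long exact sequences obtained by applying $\Hom_{X}(-,-)$ to $0 \to \mathcal{G}_{2} \to \mathcal{E} \to \mathcal{G}_{1} \to 0$ in each variable.

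\textbf{Step 1: a map onto the sum.} We will show that the natural map
\[
\Ext^{1}_{X}(\mathcal{E},\mathcal{E}) \to \Ext^{1}_{X}(\mathcal{G}_{2},\mathcal{G}_{1})
\]
interacts well with the two diagonal pieces. Set $V \coloneqq \Ext^{1}_{X}(\mathcal{E},\mathcal{E})$ and use the filtration picture: $\mathcal{E}$ is built from $\mathcal{G}_{2}$ (subobject) and $\mathcal{G}_{1}$ (quotient). Applying $\Hom(\mathcal{G}_{2},-)$ to the sequence gives an exact piece
\[
\Hom(\mathcal{G}_{2},\mathcal{G}_{1}) \to \Ext^{1}(\mathcal{G}_{2},\mathcal{G}_{2}) \to \Ext^{1}(\mathcal{G}_{2},\mathcal{E}).
\]
Since $\Hom(\mathcal{G}_{2},\mathcal{G}_{1})=0$, the map $\Ext^{1}(\mathcal{G}_{2},\mathcal{G}_{2}) \hookrightarrow \Ext^{1}(\mathcal{G}_{2},\mathcal{E})$ is injective. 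Dually, applying $\Hom(-,\mathcal{G}_{1})$ gives
\[
\Ext^{1}(\mathcal{E},\mathcal{G}_{1}) \to \Ext^{1}(\mathcal{G}_{2},\mathcal{G}_{1}),
\]
preceded by $\Ext^{1}(\mathcal{G}_{1},\mathcal{G}_{1}) \to \Ext^{1}(\mathcal{E},\mathcal{G}_{1})$, whose kernel is the image of $\Hom(\mathcal{G}_{2},\mathcal{G}_{1})=0$. So $\Ext^{1}(\mathcal{G}_{1},\mathcal{G}_{1}) \hookrightarrow \Ext^{1}(\mathcal{E},\mathcal{G}_{1})$ is injective as well.

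\textbf{Step 2: surjectivity to $\Ext^{1}$ with mixed ends.} Next we use $\Ext^{2}(\mathcal{G}_{1},\mathcal{G}_{2})=0$. Applying $\Hom(\mathcal{E},-)$ gives
\[
\Ext^{1}(\mathcal{E},\mathcal{E}) \to \Ext^{1}(\mathcal{E},\mathcal{G}_{1}) \to \Ext^{2}(\mathcal{E},\mathcal{G}_{2}).
\]
Applying $\Hom(-,\mathcal{G}_{2})$ gives $\Ext^{2}(\mathcal{G}_{1},\mathcal{G}_{2}) \to \Ext^{2}(\mathcal{E},\mathcal{G}_{2}) \to \Ext^{2}(\mathcal{G}_{2},\mathcal{G}_{2})$, so $\Ext^{2}(\mathcal{E},\mathcal{G}_{2})$ injects into $\Ext^{2}(\mathcal{G}_{2},\mathcal{G}_{2})$.

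A cleaner route is to consider the composite $\alpha \colon \Ext^{1}(\mathcal{E},\mathcal{E}) \to \Ext^{1}(\mathcal{G}_{2},\mathcal{G}_{1})$ obtained by restricting to $\mathcal{G}_{2}$ and projecting to $\mathcal{G}_{1}$. We then show that $\ker\alpha$ surjects onto $\Ext^{1}(\mathcal{G}_{1},\mathcal{G}_{1}) \oplus \Ext^{1}(\mathcal{G}_{2},\mathcal{G}_{2})$. The mechanism is as follows. An element of $\ker\alpha$ gives a deformation preserving the filtration, with induced classes on the graded pieces. Conversely, given $(\xi_{1},\xi_{2})$, a lift exists because the obstruction to lifting lies in $\Ext^{2}(\mathcal{G}_{1},\mathcal{G}_{2})=0$.

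Concretely, use the "filtered Ext" long exact sequence. Let $\Ext^{1}_{\mathrm{filt}}$ be the $\Ext^{1}$ in the category of filtered objects. There is an exact sequence
\[
\Ext^{1}(\mathcal{G}_{1},\mathcal{G}_{2}) \to \Ext^{1}_{\mathrm{filt}}(\mathcal{E},\mathcal{E}) \to \Ext^{1}(\mathcal{G}_{1},\mathcal{G}_{1}) \oplus \Ext^{1}(\mathcal{G}_{2},\mathcal{G}_{2}) \to \Ext^{2}(\mathcal{G}_{1},\mathcal{G}_{2}) = 0.
\]
There is also a map $\Ext^{1}_{\mathrm{filt}}(\mathcal{E},\mathcal{E}) \to \Ext^{1}(\mathcal{E},\mathcal{E})$. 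Its kernel is controlled by $\Hom(\mathcal{G}_{2},\mathcal{G}_{1})=0$, since the cone of filtered-to-unfiltered $\RHom$ is $\RHom(\mathcal{G}_{2},\mathcal{G}_{1})$, and this gives injectivity.

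\textbf{Step 3: conclusion.} We obtain
\[
\dim\Ext^{1}(\mathcal{E},\mathcal{E}) \ge \dim\Ext^{1}_{\mathrm{filt}} \ge \dim\Ext^{1}(\mathcal{G}_{1},\mathcal{G}_{1})+\dim\Ext^{1}(\mathcal{G}_{2},\mathcal{G}_{2}) - \dim(\text{image of } \Ext^{1}(\mathcal{G}_{1},\mathcal{G}_{2})).
\]
The image of $\Ext^{1}(\mathcal{G}_{1},\mathcal{G}_{2})$ lies in $\Ext^{1}_{\mathrm{filt}}$, so it only adds to the dimension. The count therefore comes from surjectivity: $\dim\Ext^{1}_{\mathrm{filt}} \ge \dim(\text{target})$, and injectivity into $\Ext^{1}(\mathcal{E},\mathcal{E})$ then gives the inequality.

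\textbf{Main obstacle.} The key technical point is to set up $\RHom_{\mathrm{filt}}$ rigorously. We can do this as $\RHom$ in $D(X)$ of the two-term filtered object, via the triangle
\[
\RHom_{\mathrm{filt}}(\mathcal{E},\mathcal{E}) \to \RHom(\mathcal{E},\mathcal{E}) \to \RHom(\mathcal{G}_{2},\mathcal{G}_{1}).
\]
To avoid this, we would alternatively write $\RHom(\mathcal{E},\mathcal{E})$ as an iterated extension of the four pieces $\RHom(\mathcal{G}_{i},\mathcal{G}_{j})$ and chase degrees. This is elementary, but the bookkeeping of the connecting maps is where care is needed.
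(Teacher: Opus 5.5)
The paper gives no proof of this lemma; it is quoted from Kuleshov--Orlov. Your strategy is the standard argument for it: you compare $\Ext^{1}(\mathcal{E},\mathcal{E})$ with the first Ext of filtration-preserving endomorphisms, and your Step~1 is correct. The gap is that the two exact sequences for $\Ext^{1}_{\mathrm{filt}}$, which carry the whole proof, are only asserted, and the construction you propose does not give both of them.

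Suppose you define $\mathbb{R}\mathrm{Hom}_{\mathrm{filt}}(\mathcal{E},\mathcal{E})$ only as the cocone of $\mathbb{R}\mathrm{Hom}(\mathcal{E},\mathcal{E})\to\mathbb{R}\mathrm{Hom}(\mathcal{G}_{2},\mathcal{G}_{1})$ in $D(k)$. Then its $H^{1}$ is identified with $\ker\alpha$, using $\Hom(\mathcal{G}_{2},\mathcal{G}_{1})=0$. But nothing in that definition relates it to $\mathbb{R}\mathrm{Hom}(\mathcal{G}_{1},\mathcal{G}_{2})$ and the diagonal terms. So the surjection onto $\Ext^{1}(\mathcal{G}_{1},\mathcal{G}_{1})\oplus\Ext^{1}(\mathcal{G}_{2},\mathcal{G}_{2})$ remains unproved, and that is the only place where $\Ext^{2}(\mathcal{G}_{1},\mathcal{G}_{2})=0$ enters.

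To make your route rigorous, work at chain level:
\begin{itemize}
\item By the horseshoe lemma, take injective resolutions forming a termwise split sequence $0\to I_{2}^{\bullet}\to I^{\bullet}\to I_{1}^{\bullet}\to 0$ over the given one, so $I^{\bullet}$ has differential $\left(\begin{smallmatrix} d_{2} & h\\ 0 & d_{1}\end{smallmatrix}\right)$.
\item Write elements of $\Hom^{\bullet}(I^{\bullet},I^{\bullet})$ as block matrices, and let $W$ consist of those whose $I_{2}^{\bullet}\to I_{1}^{\bullet}$ block vanishes.
\item A direct check shows that $W$ is a subcomplex with quotient $\Hom^{\bullet}(I_{2}^{\bullet},I_{1}^{\bullet})$.
\item Likewise $\Hom^{\bullet}(I_{1}^{\bullet},I_{2}^{\bullet})\subset W$ is a subcomplex with quotient $\Hom^{\bullet}(I_{1}^{\bullet},I_{1}^{\bullet})\oplus\Hom^{\bullet}(I_{2}^{\bullet},I_{2}^{\bullet})$.
\end{itemize}
The two long exact sequences are then exactly the ones you wrote. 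Two smaller points: in Step~3, drop the inequality with the minus sign, since exactness gives an equality with a plus sign and only the surjectivity is needed. The computation with $\Hom(\mathcal{E},-)$ at the start of Step~2 is never used.

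Alternatively, you can skip $\Ext_{\mathrm{filt}}$ entirely and make your heuristic ``a lift exists because the obstruction lies in $\Ext^{2}(\mathcal{G}_{1},\mathcal{G}_{2})=0$'' literally true.
\begin{itemize}
\item Let $i\colon\mathcal{G}_{2}\to\mathcal{E}$ and $p\colon\mathcal{E}\to\mathcal{G}_{1}$, and rotate to the distinguished triangle $\mathcal{G}_{1}[-1]\to\mathcal{G}_{2}\xrightarrow{i}\mathcal{E}\xrightarrow{p}\mathcal{G}_{1}$.
\item Take $\xi_{1}\in\Ext^{1}(\mathcal{G}_{1},\mathcal{G}_{1})$ and $\xi_{2}\in\Ext^{1}(\mathcal{G}_{2},\mathcal{G}_{2})$. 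The square with vertical maps $\xi_{1}[-1]$ and $\xi_{2}$, from this triangle to its shift by $[1]$, commutes automatically. Both composites lie in $\Hom(\mathcal{G}_{1}[-1],\mathcal{G}_{2}[1])=\Ext^{2}(\mathcal{G}_{1},\mathcal{G}_{2})=0$.
\item The axiom TR3 then gives $\xi\in\Ext^{1}(\mathcal{E},\mathcal{E})$ with $\xi\circ i=\pm\, i[1]\circ\xi_{2}$ and $p[1]\circ\xi=\pm\,\xi_{1}\circ p$. The signs depend on conventions and are irrelevant.
\item Hence the image of the map $\Ext^{1}(\mathcal{E},\mathcal{E})\to\Ext^{1}(\mathcal{G}_{2},\mathcal{E})\oplus\Ext^{1}(\mathcal{E},\mathcal{G}_{1})$, $\xi\mapsto(\xi\circ i,\,p[1]\circ\xi)$, contains the image of $\Ext^{1}(\mathcal{G}_{2},\mathcal{G}_{2})\oplus\Ext^{1}(\mathcal{G}_{1},\mathcal{G}_{1})$ under the two maps of your Step~1.
\item Those two maps are injective because $\Hom(\mathcal{G}_{2},\mathcal{G}_{1})=0$, so the inequality follows immediately.
\end{itemize}
This needs no model for a filtered $\mathbb{R}\mathrm{Hom}$. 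The chain-level route costs more setup, but in exchange it computes $\dim\ker\alpha$ exactly, which the lemma does not require.
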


\begin{lem} \label{BIS}
    Let $X$ be a bielliptic 
    with $\omega_{X} \simeq \mathcal{O}_{X}$. 
    Take a simple sheaf $\mathcal{E} \in \Coh(X)$ with $\ch (\mathcal{E}) = (1,0,0).$ Then $\mathcal{E}$ is an invertible sheaf.
\end{lem}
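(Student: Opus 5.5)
The plan is to reduce \cref{BIS} to \cref{AFU3} by showing that $\mathcal{E}$ is torsion-free. Since $\rk(\mathcal{E})=1\neq 0$ and $\mathcal{E}$ is simple, \cref{AFU5} shows it suffices to prove $\dim \Supp(\Tor(\mathcal{E})) \le 0$. So the main task is to exclude a torsion part of $\mathcal{E}$ with one-dimensional support.

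Set $\mathcal{T} \coloneqq \Tor(\mathcal{E})$ and $\mathcal{G} \coloneqq \mathcal{E}/\mathcal{T}$, which is torsion-free of rank one, and suppose $\mathcal{T}\neq 0$. We have the exact sequence
\[
    0 \to \mathcal{T} \to \mathcal{E} \to \mathcal{G} \to 0 .
\]
We want to apply \cref{mukai} with $\mathcal{G}_{2}=\mathcal{T}$ and $\mathcal{G}_{1}=\mathcal{G}$, so we check its two hypotheses.
\begin{itemize}
    \item $\Hom_{X}(\mathcal{T},\mathcal{G})=0$ because $\mathcal{G}$ is torsion-free.
    \item By Serre duality and $\omega_{X}\simeq\mathcal{O}_{X}$, we get $\Ext^{2}_{X}(\mathcal{G},\mathcal{T}) \simeq \Hom_{X}(\mathcal{T},\mathcal{G})^{\vee}=0$.
\end{itemize}
\cref{mukai} then gives
\[
    \dim\Ext^{1}(\mathcal{E},\mathcal{E}) \ge \dim\Ext^{1}(\mathcal{G},\mathcal{G})+\dim\Ext^{1}(\mathcal{T},\mathcal{T}).
\]

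Next we compute $\dim\Ext^{1}(\mathcal{E},\mathcal{E})$. Since $\chi(\mathcal{O}_{X})=0$, the Euler form of \S2.2 gives $\chi(\mathcal{E},\mathcal{E})=0$ for $\ch(\mathcal{E})=(1,0,0)$. By Serre duality with trivial canonical bundle, $\dim\Ext^{2}(\mathcal{E},\mathcal{E})=\dim\Hom(\mathcal{E},\mathcal{E})=1$, so $\dim\Ext^{1}(\mathcal{E},\mathcal{E})=2$.

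For $\mathcal{G}$, we write $\ch(\mathcal{G})=(1,-D,t')$ where $\ch(\mathcal{T})=(0,D,t)$. Since $\mathcal{G}$ is torsion-free of rank one, it is simple, so
\[
    \dim\Ext^{1}(\mathcal{G},\mathcal{G}) = 2-\chi(\mathcal{G},\mathcal{G}) = 2-(2t'-D^{2}) \cdot (\text{sign conventions}) .
\]
Concretely, $\chi(\mathcal{G},\mathcal{G}) = 2\ch_{2}(\mathcal{G})-\ch_{1}(\mathcal{G})^{2}=-\Delta$, a discriminant that is $\le 0$ by Bogomolov, or more directly because $\mathcal{G}\hookrightarrow\mathcal{G}^{\vee\vee}$ is a line bundle twisted by an ideal of points. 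This gives $\dim\Ext^{1}(\mathcal{G},\mathcal{G})\ge 2$.

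It then remains to show $\Ext^{1}(\mathcal{T},\mathcal{T})\neq 0$, which yields $\dim\Ext^{1}(\mathcal{E},\mathcal{E})\ge 3$, a contradiction. This is where \cref{NRT} enters: on a bielliptic surface with $\omega_{X}\simeq\mathcal{O}_{X}$ there is no rigid torsion sheaf, so $\Ext^{1}(\mathcal{T},\mathcal{T})\neq0$.

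This argument already gives $\mathcal{T}=0$ outright, which is stronger than the reduction to \cref{AFU5}. Using \cref{AFU5} may still be useful if the lower bound for $\mathcal{G}$ needs care, but the lower bound for $\mathcal{G}$ is the one step that needs attention. One writes $\mathcal{G} \simeq \mathcal{L}\otimes I_{Z}$ with $\ch(\mathcal{G})=(1,c_{1},c_{1}^{2}/2-\ell(Z))$. Then $\chi(\mathcal{G},\mathcal{G})=-2\ell(Z)\le 0$ and $\hom(\mathcal{G},\mathcal{G})=\ext^{2}(\mathcal{G},\mathcal{G})=1$, hence $\dim\Ext^{1}(\mathcal{G},\mathcal{G})=2+2\ell(Z)\ge 2$. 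With this, the contradiction shows $\mathcal{T}=0$, so $\mathcal{E}$ is torsion-free, and \cref{AFU3} gives that $\mathcal{E}$ is invertible.
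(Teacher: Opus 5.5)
Your proposal is correct and follows essentially the paper's argument: you use the same sequence $0\to\mathcal{T}\to\mathcal{E}\to\mathcal{E}/\mathcal{T}\to 0$, the inequality of \cref{mukai} (whose two hypotheses you check explicitly, which the paper leaves implicit), $\dim\Ext^{1}(\mathcal{E},\mathcal{E})=2$, and the contradiction with \cref{NRT}. There are two small differences. First, you get $\dim\Ext^{1}(\mathcal{E}/\mathcal{T},\mathcal{E}/\mathcal{T})\ge 2$ from Riemann--Roch on $\mathcal{L}\otimes I_{Z}$, obtaining exactly $2+2\ell(Z)$, whereas the paper uses the trace map together with $h^{1}(\mathcal{O}_{X})=2$. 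Second, you observe that \cref{NRT} also rules out nonzero zero-dimensional torsion, so the case split through \cref{AFU5} is not needed.
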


\begin{proof}
    Let $\mathcal{T}$ be the torsion subsheaf of $\mathcal{E}$. If  
    $\dim \Supp \mathcal{T} \le 0$
    then $\mathcal{E}$ is a torsion-free sheaf by \cref{AFU5}. 
    Thus $\mathcal{E}$ is an invertible sheaf by \cref{AFU3}.

    We now show that $\dim \Supp \mathcal{T} \neq 1$.
    Suppose instead that $\dim \Supp \mathcal{T} = 1$. 
    As $\mathcal{E}$ is a simple sheaf, we have
    \begin{eqnarray*}
        0 &=& - \langle (1,0,0), (1,0,0) \rangle \\
        &=& \chi (\mathcal{E},\mathcal{E}) \\
        &=& 
        2 \dim \Ext_{X}^{0} (\mathcal{E},\mathcal{E})
        - \dim \Ext_{X}^{1} (\mathcal{E},\mathcal{E}) \\
        &=& 
        2 - \dim \Ext_{X}^{1} (\mathcal{E},\mathcal{E}).
    \end{eqnarray*}
    This gives $ \dim \Ext_{X}^{1}(\mathcal{E},\mathcal{E}) =2$.
    On the other hand, we know that 
    $\tr \circ \I = \rk (\mathcal{E}/ \mathcal{T} ) \cdot \id = \id$ is injective. Here, $\I$ and $\tr$ are defined in \cite[Lemma 10.1.3]{huybrechts2010geometry}.
    \[
        \I \colon H^{1}(X, \mathcal{O}_{X}) 
        \to 
        \Ext_{X}^{1} (\mathcal{E} / \mathcal{T} , \mathcal{E} / \mathcal{T} ) ,
    \]
    \[
        \tr \colon 
        \Ext_{X}^{1} (\mathcal{E} / \mathcal{T} , \mathcal{E}/ \mathcal{T} ) 
        \to H^{1}(X, \mathcal{O}_{X}).
    \]
    This gives
    \[
        \dim H^{1}(X, \mathcal{O}_{X}) 
        \le 
        \dim 
        \Ext_{X}^{1} (\mathcal{E} / \mathcal{T}, \mathcal{E}/ \mathcal{T}).
    \]
    Thus we have 
    \[
        \dim \Ext_{X}^{1}(\mathcal{E} / \mathcal{T},\mathcal{E} / \mathcal{T}) \ge 2.
    \]
    Moreover, we have
    \[
        \dim \Ext_{X}^{1} (\mathcal{E},\mathcal{E})
        \ge 
        \dim \Ext_{X}^{1} (\mathcal{E} / \mathcal{T},\mathcal{E} / \mathcal{T}) + \dim \Ext_{X}^{1} (\mathcal{T}, \mathcal{T})
    \]
    by \cref{mukai}. 
    Therefore we have $\dim \Ext_{X}^{1} (\mathcal{T}, \mathcal{T})=0.$
    This contradicts to \cref{NRT}. 
\end{proof}

\begin{lem} \label{AVA}
    Let $X$ be a bielliptic surface 
    with $\omega_{X} \simeq \mathcal{O}_{X}$ and $\Phi \in \Auteq D(X)$ be an autoequivalence.
    Then $\Phi^{A} (0,0,1) \neq (1,0,0)$.
\end{lem}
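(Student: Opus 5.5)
Suppose, for contradiction, that $\Phi^{A}(0,0,1)=(1,0,0)$, and put $\mathcal{E}\coloneqq\Phi(\mathcal{O}_{x})$ for a point $x\in X$. The plan is to show that, up to shift, $\mathcal{E}$ is a line bundle. Then $\Phi^{-1}$ sends a line bundle to a skyscraper sheaf, and this is impossible for the following reason. Composing $\Phi$ with $(-)\otimes\mathcal{E}^{-1}$, we may assume $\Phi(\mathcal{O}_{x})\simeq\mathcal{O}_{X}$ (up to shift). Now compare the graded dimensions of the self-Ext algebras, which any equivalence preserves. Since $\omega_{X}\simeq\mathcal{O}_{X}$, the invariants table gives $q(X)=2$ and $p_{g}(X)=1$. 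Hence $\dim\Ext^{\ast}(\mathcal{O}_{X},\mathcal{O}_{X})=(1,q,p_{g})=(1,2,1)$. On the other hand, $\Ext^{\ast}(\mathcal{O}_{x},\mathcal{O}_{x})$ is the exterior algebra on $T_{x}X$, so its dimensions are $(1,2,1)$ as well. The Hom dimensions therefore do not distinguish the two objects, and the contradiction has to come from elsewhere. I would take it from the supports. Indeed, $\mathcal{O}_{x}$ is not orthogonal to $\mathcal{O}_{X}$, whereas for a second point $x'\neq x$ we have $\Hom^{\ast}(\mathcal{O}_{x},\mathcal{O}_{x'})=0$. So every $\Phi(\mathcal{O}_{x'})$ must be orthogonal to $\mathcal{E}$. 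If all the $\Phi(\mathcal{O}_{x'})$ are line bundles, this fails, because $\chi(L,L')=0$ while $\Hom(L,L')$ can be nonzero. That is not yet a contradiction by itself, so a different endgame is needed.

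A cleaner endgame is available once we know that $\Phi(\mathcal{O}_{x'})$ is a shifted line bundle for every $x'$. In that case $\Phi^{-1}$ sends a point-like family to a family of line bundles. Applying the argument to $\Phi^{-1}$ composed with suitable line-bundle twists, the composite sends every skyscraper to a skyscraper, and so it is standard by \cref{JOS}. A standard autoequivalence maps $(0,0,1)$ to $(0,0,\pm1)$, not to $(1,0,0)$, and this gives the contradiction. More precisely, the argument should run on the Fourier--Mukai kernel $\mathcal{P}$ of $\Phi$, whose fibres $\mathcal{P}_{x}$ are all shifted line bundles. Since $\mathcal{P}$ is a family of line bundles parametrized by $X$, it defines a morphism $X\to\Pic^{0}$ (after a twist). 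This morphism is injective on points, because distinct points $x\neq x'$ have orthogonal images. Then $\Phi\circ$(inverse) would identify $X$ with a component of $\Pic$, which is an abelian surface, since $\Pic^{0}_{\mathrm{red}}$ is $2$-dimensional when $q=2$. This contradicts $X$ being bielliptic, for example because $b_{2}(X)=2\neq6$ or because $X$ does not have a trivial tangent bundle. I would phrase this final step via \cite[Corollary 5.23]{MR2244106} applied to the dual family.

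The key step is therefore to show that $\mathcal{E}=\Phi(\mathcal{O}_{x})$ is a shifted sheaf and then to apply \cref{BIS}. We know that $\mathcal{E}$ is simple, meaning $\Hom(\mathcal{E},\mathcal{E})=k$ and $\Hom^{<0}(\mathcal{E},\mathcal{E})=0$, and that $\ch(\mathcal{E})=(1,0,0)$. To get sheafness, I would use the spectral sequence $E_{2}^{p,q}=\bigoplus_{i}\Ext^{p}(\mathcal{H}^{i},\mathcal{H}^{i+q})\Rightarrow\Ext^{p+q}(\mathcal{E},\mathcal{E})$ in Bridgeland's form. Because $\omega_{X}$ is trivial, Serre duality gives $\Ext^{2}(\mathcal{H}^{\max},\mathcal{H}^{\min})\simeq\Hom(\mathcal{H}^{\min},\mathcal{H}^{\max})^{\vee}$. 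Either this survives and contradicts $\Hom^{<0}=0$, or one obtains the standard conclusion that $\mathcal{E}$ has at most one nonzero cohomology sheaf. This is the usual argument that point-like objects on surfaces with trivial canonical bundle are sheaves, at least when no cohomology sheaf is supported in dimension $0$. It is the main obstacle. Potentially, rigid torsion pieces could appear, and \cref{NRT} is exactly what excludes them, since each $\mathcal{H}^{i}$ would otherwise contribute with $\Ext^{1}=0$ via \cref{mukai}. Once $\mathcal{E}[n]$ is a simple sheaf with Chern character $\pm(1,0,0)$, and the sign is forced to be $+$ because it is a sheaf of nonnegative rank, \cref{BIS} shows that it is invertible. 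Since $x$ was arbitrary, we obtain the family of line bundles used above.
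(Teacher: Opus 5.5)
Your overall route is the paper's: show that $\Phi(\mathcal{O}_x)$ is a shifted line bundle, then use the classifying map to the Picard scheme. Two steps, however, fail as written.

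\textbf{The endgame.} The claim that $\underline{\Pic}^0(X)_{\red}$ is $2$-dimensional because $q=2$ is false. The hypothesis $\omega_X\simeq\mathcal{O}_X$ forces $\Char(k)\in\{2,3\}$. There $h^1(\mathcal{O}_X)=2$ but $b_1(X)=2$, so the Picard scheme is non-reduced and $\underline{\Pic}^0(X)_{\red}$ is an elliptic curve, dual to $\Alb(X)=A/G$. This is exactly the fact the paper uses. So $X$ is never ``identified with an abelian surface''; injectivity on points would not give an isomorphism anyway. Your alternative of twisting $\Phi^{-1}$ until it sends skyscrapers to skyscrapers is also not justified.

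The repair follows from your own orthogonality observation. Since $X$ is reduced and connected, the classifying morphism lands in a reduced connected component of $\underline{\Pic}(X)$, and that component is a curve. On the other hand, $\Hom^{\ast}(\mathcal{O}_x,\mathcal{O}_{x'})=0$ for $x\neq x'$ shows that $x\mapsto[\Phi(\mathcal{O}_x)]$ is injective on closed points. A morphism from a surface to a curve cannot be injective on closed points, which is the contradiction. The paper argues differently: it writes $\Phi_{\mathcal{P}}=\Phi_{\mathcal{U}'}\circ\mathbb{R}\varphi'_{\ast}$ and notes that $\Phi_M$ would then factor through $\mathbb{Z}^2$, which is impossible for an automorphism of $\mathbb{Z}^4$.

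\textbf{Sheafness of $\Phi(\mathcal{O}_x)$.} You need this before \cref{BIS} applies, and your argument does not establish it. Serre duality together with $\Hom^{<0}(\mathcal{E},\mathcal{E})=0$ does not force a single cohomology sheaf. On a K3 surface, applying the spherical twist $T_{\mathcal{O}_X}$ twice to $\mathcal{O}_x$ gives an object with the Ext algebra of a point, yet its cohomology sheaves are $\mathcal{I}_x$ and $\mathcal{O}_X$. The culprit there is a rigid torsion-free sheaf, which \cref{NRT} does not address.

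The paper handles this in \cref{Prop:not-simple-sheaf} with two inputs:
\begin{enumerate}
\item The term $E_2^{1,0}$ of the spectral sequence survives, so $\sum_i\dim\Ext^1(\mathcal{H}^i,\mathcal{H}^i)\le 2$.
\item The proof of \cref{rigid} shows that every nonzero sheaf on $X$ has $\dim\Ext^1\ge 2$. It reduces via \cref{mukai} to the torsion part and to the last Harder--Narasimhan factor of the torsion-free part. It then applies Koseki's Bogomolov--Gieseker inequality in positive characteristic to that factor, giving $\dim\Ext^1=2\dim\Hom+\Delta\ge 2$.
\end{enumerate}
Your proposal has no substitute for the second input, so the reduction to \cref{BIS} is not established.
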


\begin{proof}
    Assume that there exists an autoequivalence $\Phi_{\mathcal{P}} \in \Auteq D(X)$ such that $\Phi_{v(\mathcal{P})}^{A} (0,0,1) = (1,0,0)$. 
    Let $\mathcal{U}$ be a Poincar\'{e} bundle on $\underline{\Pic}^0(X) \times X$. 
    By \cref{BIS}, If necessary, we apply a shift to $\Phi$, $\Phi_{\mathcal{P}}(\mathcal{O}_{x})$ is an invertible sheaf for any $x \in X$. 
    Therefore, by the universal property of Picard variety, there exists a morphism $\varphi \colon X \to \underline{\Pic}^0(X)$ such that
    \[
        \mathcal{P} =
        (\varphi \times \id)^{\ast} \mathcal{U}.
    \]
    Let $\psi \colon \underline{\Pic}^0(X)_{\red}  \to \underline{\Pic}^0(X)$ be the closed immersion
    and $\varphi^{\prime} \colon X \to \underline{\Pic}^0(X)_{\red}$ be an induced morphism by $\varphi$.
    Set $\mathcal{U}^{\prime} \coloneqq (\psi \times id)^{\ast}\mathcal{U}$.
    It follows that 
    \[
        \mathcal{P} =
        (\varphi \times \id)^{\ast} \mathcal{U} 
        =
        ((\psi \circ \varphi^{\prime}) \times \id)^{\ast} \mathcal{U}
        =
        (\varphi^{\prime} \times \id)^{\ast} \mathcal{U}^{\prime}.
    \]
    Then
    $\Phi_{\mathcal{P}} =
    \Phi_{\mathcal{U}^{\prime}} \circ \mathbb{R} \varphi_{\ast}^{\prime}$.

    Note that $\underline{\Pic}^0(X)_{\red}$ is an elliptic curve.
    For any integral functors $\Phi \colon D(X) \to D(\underline{\Pic}^0(X)_{\red}), \Psi \colon D(\underline{\Pic}^0(X)_{\red}) \to D(X)$, the following morphisms are induced as in \cref{Matrix}:
    \[
        \Phi_{M} \colon \mathbb{Z} \oplus \Num (X) \oplus  \mathbb{Z} \simeq \mathbb{Z}^{4} \to \mathbb{Z} \oplus \mathbb{Z},
    \]
    \[
        \Psi_{M} \colon \mathbb{Z} \oplus \mathbb{Z} \to \mathbb{Z} \oplus \Num (X) \oplus  \mathbb{Z} \simeq \mathbb{Z}^{4}.
    \]
    We have $(\mathbb{R} \varphi_{\ast}^{\prime})_{M} \in M_{2,4}(\mathbb{Z}), \Phi_{\mathcal{U}^{\prime}, M} \in M_{4,2}(\mathbb{Z})$ and
    \[
        \Phi_{\mathcal{Q},M} = \Phi_{\mathcal{U}^{\prime}, M} (\mathbb{R} \varphi_{\ast}^{\prime})_{M}.
    \]
    This contradicts the fact that $\Phi_{\mathcal{P},M}$ is automorphism on $\mathbb{Z}^{4}$.
\end{proof}

%% file: sections/section3/sub1.tex
\subsection{The proof of \cref{Prop:rank}} \label{SUB1}
Let $(a,b)$ denote the greatest common divisor of $a,b \in \mathbb{Z}$. 
We will frequently use the following elementary fact.

\begin{lem} 
    Let $a,b,n \in \mathbb{Z}$ and set $h \coloneqq (a,b).$ Then the following are equivalent:
    \begin{enumerate}
        \item $(na,b) = (a,b)$.
        \item $\expar{ n,\frac{b}{h}} = 1$.
    \end{enumerate}
\end{lem}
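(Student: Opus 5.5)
We prove the equivalence by working prime by prime. Since $h \mid a$ and $h \mid b$, write $a = h a'$ and $b = h b'$ with $(a',b')=1$. Then $(na,b) = h\,(na',b')$, so condition (1) is equivalent to $(na',b')=1$. We will show this is equivalent to $(n,b')=1$, which is exactly condition (2), since $b' = b/h$.

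Suppose first that $(n,b')=1$. Since also $(a',b')=1$, no prime divides both $na'$ and $b'$: any prime dividing $na'$ divides $n$ or $a'$, and both are coprime to $b'$. Hence $(na',b')=1$.

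Conversely, suppose $(na',b')=1$. Any common divisor of $n$ and $b'$ also divides $na'$ and $b'$, so $(n,b')$ divides $(na',b')=1$.

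The only degenerate case is $a=b=0$. Then $h=0$ and $b/h$ is not defined, so we adopt the convention that the lemma assumes $h \neq 0$; with that assumption the argument above is complete. No real obstacle arises: the only point requiring care is the factorisation $(na,b)=h\,(na',b')$, which holds because $\gcd$ is multiplicative under scaling both arguments by $h$.
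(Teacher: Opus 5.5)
Your proof is correct and follows the same route as the paper: write $a=ha'$, $b=hb'$, use $(na,b)=h\,(na',b')$ to reduce to the coprime case $(a',b')=1$, and then check the equivalence there. You additionally spell out the coprime-case argument that the paper calls clear, and you flag the degenerate case $a=b=0$, where $b/h$ is undefined.
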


\begin{proof}
Take $a,b \in \mathbb{Z}$, and set $a^{\prime} \coloneqq a/h,$ $b^{\prime} \coloneqq b/h$. Note that
        \[
            (a,b) = (a^{\prime}h,b^{\prime}h) = h(a^{\prime},b^{\prime}),
        \]
        \[
            (na,b) = (na^{\prime}h,b^{\prime}h) = h(na^{\prime},b^{\prime}).
        \]
Thus the equality $(na,b) = (a,b)$ is equivalent to $(na^{\prime},b^{\prime}) = (a^{\prime},b^{\prime})$. Thus we can reduce to the case $(a,b)=1$. 
In this case, the assertion is clear.
\end{proof}

\begin{lem} \label{ERD}
    Let $f \colon X \rightarrow C$ be an elliptic surface and let $F \in \rm{Num}(X)$ denote the numerical class of a fiber of $f$. For any object $\mathcal{E} \in D(X)$, the following statements hold:
    \begin{enumerate}
        \item 
        If 
        $(\lambda \rk (\mathcal{E}),d(\mathcal{E})) = (\rk (\mathcal{E}),d(\mathcal{E}))$,
        then there exists a relative Fourier--Mukai transform $\Phi$ along $f$ such that $\rk (\Phi^{-1}(\mathcal{E}))=0$.
        \item 
        If 
        $(\lambda \rk (\mathcal{E}),d(\mathcal{E})) = \lambda (\rk (\mathcal{E}),d(\mathcal{E}))$, then there exists a relative Fourier--Mukai transform $\Phi$ along $f$ such that $d(\Phi^{-1}(\mathcal{E}))=0$.
    \end{enumerate}
\end{lem}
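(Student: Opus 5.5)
The plan is to reduce both statements to finding a suitable matrix in the group of \cref{RFM}, and then to apply the transform given by that theorem. Write $r \coloneqq \rk(\mathcal{E})$, $d \coloneqq d(\mathcal{E})$ and $h \coloneqq (r,d)$. If $r=d=0$, both assertions are trivial, since we may take a composition with the identity, or else a transform with $b=1$. So assume $(r,d)\neq(0,0)$, and write $r = hr'$, $d = hd'$ with $(r',d')=1$.

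By \cref{RFM}, for any relative Fourier--Mukai transform $\Phi$ with matrix $\begin{bmatrix} c & a \\ d_{0} & b\end{bmatrix}$, where $\lambda \mid d_0$ and $a>0$, we have
\[
\begin{bmatrix}\rk(\Phi^{-1}\mathcal{E})\\ d(\Phi^{-1}\mathcal{E})\end{bmatrix}
=\begin{bmatrix} b & -a\\ -d_0 & c\end{bmatrix}\begin{bmatrix} r\\ d\end{bmatrix}.
\]
Statement 1 therefore asks for $br - ad = 0$, and statement 2 for $-d_0 r + c d = 0$.

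For (1), the hypothesis $(\lambda r,d)=(r,d)$ is equivalent, by the elementary lemma with $n=\lambda$, to $(\lambda, d')=1$. I take $(a,b) = \pm(r',d')$, with the sign chosen so that $a>0$. If $r'=0$, then $d'=\pm1$, and I instead take $a=1$, $b=0$; the right way to handle this is to first fix $a>0$, i.e. to arrange $r'>0$ by the sign choice. The condition $br-ad=0$ then holds because $d'r - r'd = h(d'r'-r'd')=0$. It remains to complete $(a,b)$ to a matrix in $\SL_2(\mathbb{Z})$ with $\lambda \mid d_0$. That is, we need $c,d_0$ with $cb - ad_0 = 1$ and $\lambda\mid d_0$, which amounts to solving $cb \equiv 1 \pmod{a\lambda}$ after writing $d_0=\lambda e$. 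This is solvable exactly when $(a\lambda,b)=1$. This coprimality is also the condition needed for $J_X(a,b)$ in \cref{RFM}, and it holds since $(a,b)=1$ and $(\lambda,b)=(\lambda,d')=1$. The degenerate case $r'=0$ forces $(a,b)=(1,0)$ up to sign, which would need $\lambda=1$; this is consistent, because then $(\lambda,d')=1$ with $d'=\pm1$ holds for every $\lambda$, but $b=0$ is coprime to $a\lambda$ only if $\lambda=1$. I expect this to be the one delicate point. To deal with it, when $r=0$ I would not apply a transform at all (the identity suffices), so the construction is only needed when $r'\neq 0$, and there the sign choice gives $a=|r'|>0$.

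For (2), the hypothesis $(\lambda r,d)=\lambda(r,d)$ means $\lambda h \mid d$, so $\lambda \mid d'$, and $(r',d')=1$ then forces $(\lambda, r')=1$. We need $c d = d_0 r$, i.e. $(c,d_0)$ proportional to $(r',d')$. I take $(c,d_0) = \pm(r',d')$, so that $\lambda\mid d_0$ holds automatically. Then I must find $a>0$ and $b$ with $cb - ad_0 = 1$ and $(a\lambda,b)=1$. Since $(c,d_0)=1$, there is a solution $(a,b)$, and it can be modified by $(a,b)\mapsto(a+kc, b+kd_0)$. The main obstacle is to choose $k$ so that $a>0$ and $(a\lambda, b)=1$. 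Here $(a,b)=1$ is automatic from $cb-ad_0=1$. For $(\lambda,b)=1$: we have $cb\equiv 1 \pmod{\lambda}$ because $\lambda\mid d_0$, so $b$ is a unit mod $\lambda$. Positivity of $a$ comes from taking $k$ large of the appropriate sign when $c\neq0$. If $c=0$, then $r=0$, and either $d=0$ (trivial) or $d_0=\pm1$, which forces $\lambda=1$, in which case $a=\mp1$ can be fixed by an overall sign change of the whole matrix. In all cases \cref{RFM} then supplies the required $\Phi$, and the displayed formula gives $d(\Phi^{-1}(\mathcal{E}))=0$.
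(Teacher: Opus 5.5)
Your proposal is correct and follows essentially the same route as the paper: in (1) you take the column $(a,b)$ proportional to $(r/h,d/h)$ and complete it to a matrix with $\lambda \mid d_0$ using $(\lambda,d/h)=1$, and in (2) you take the column $(c,d_0)$ proportional to $(r/h,d/h)$ and shift the B\'ezout solution to get $a>0$, exactly as the paper does (the paper gets $a>0$ in (1) by shifting $\mathcal{E}$ rather than by a sign choice). Your separate treatment of $r=0$, where the identity (the empty composition, which is all that is needed in the application) suffices, covers a case the paper silently skips; however, your intermediate remarks there about taking $(a,b)=(1,0)$ are wrong, since that choice gives rank $-d\neq 0$, and should simply be deleted.
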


\begin{proof}
    Set $r\coloneqq \rk (\mathcal{E})$, $d \coloneqq d(\mathcal{E}),$ $h \coloneqq (r,d)$.
    
    (1) 
    After applying a shift functor if necessary, we may assume $r>0$. There exist integers $x,y \in \mathbb{Z}$ such that 
    \[
        x \frac{d}{h}-y\frac{r}{h}=1.
    \]
    Then, for any $n \in \mathbb{Z}$, we have
    \[
        \expar{x-n\frac{r}{h}} \frac{d}{h} - \expar{ y-n\frac{d}{h}}\frac{r}{h} =1.
    \]
    Since the assumption in (1) is equivalent to $(\lambda,d/h) = 1$, 
    there exists an integer $n \in \mathbb{Z}$ such that $y-nd/h \in \lambda\mathbb{Z}$. Then, by \cref{RFM}, there exists a relative Fourier--Mukai transform $\Phi$ along $f$ with 
    \[
        \Phi_{F}= 
        \begin{bmatrix}
        x-nr/h & r/h \\
        y-nd/h & d/h \\
        \end{bmatrix}.
    \]
    Consequently,
    \[
        \Phi_{F}^{-1} = 
        \begin{bmatrix}
         d/h & -r/h \\
        -y+nd/h & x-nr/h \\
        \end{bmatrix}.
    \]
    Clearly, $\Phi$ satisfies the desired condition.

    (2) 
    There exist integers $x \in \mathbb{Z}$ and $y \in \mathbb{Z}_{>0}$ such that $$x\frac{r}{h}-y\frac{d}{h}=1.$$ Note that
    \[ 
        (\lambda r,d) = \lambda (r,d) \Leftrightarrow \lambda \left| \frac{d}{h} \right. .
    \]
    Then, by \cref{RFM}, there exists a relative Fourier--Mukai transform $\Phi$ along $f$ with 
    \[
        \Phi_{F}= 
        \begin{bmatrix}
        r/h & y \\
        d/h & x \\
        \end{bmatrix}.
    \]
    In particular, we have
    \[
        \Phi_{F}^{-1} = 
        \begin{bmatrix}
        x & -y \\
        -d/h & r/h \\
        \end{bmatrix}.
    \]
    Clearly, $\Phi$ satisfies the desired condition.
\end{proof}

\newcounter{breakpoint2}
\setcounter{breakpoint2}{\value{thm}}
\counterwithin{thm}{section}
\setcounter{thm}{\value{breakpoint1}}
\begin{prop}
    \textup{(}Restated\textup{)} Let $X$ be a bielliptic surface and $Y$ be a surface. Take a point $y \in Y$. Let $\Psi \colon D(Y) \to D(X)$ be an equivalence, and set $\mathcal{E} \coloneqq \Psi(\mathcal{O}_{y})$. Then there exists an autoequivalence $\Phi \in \Auteq D(X)$  which is a composition of relative Fourier--Mukai transforms such that $\rk (\Phi^{-1}(\mathcal{E}))=0$.
\end{prop}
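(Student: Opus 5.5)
The plan is to work entirely on the numerical level, since the statement only concerns the rank of $\Phi^{-1}(\mathcal{E})$. Write $\ch(\mathcal{E}) = (r, \frac{s_{1}}{\lambda_{1}}F_{1} + \frac{s_{2}}{\lambda_{2}}F_{2}, t)$, i.e.\ the vector $(r,s_{1},s_{2},t) \in \mathbb{Z}^{4}$. I would record three constraints on this vector.
\begin{itemize}
\item[(a)] Since $\mathcal{E}$ is the image of a point sheaf, $\langle v(\mathcal{E}),v(\mathcal{E})\rangle = -\chi(\mathcal{O}_{y},\mathcal{O}_{y}) = 0$, which gives $s_{1}s_{2} = rt$.
\item[(b)] The entries satisfy $\gcd(r,s_{1},s_{2},t)=1$. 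This follows by the argument of \cref{COP}: pair $v(\mathcal{E})$ with the image of the class of a line bundle on $Y$ to get a Mukai pairing equal to $-1$.
\item[(c)] When $\omega_{X}\not\simeq\mathcal{O}_{X}$, \cref{RIP} gives $r \in \ord(\omega_{X})\mathbb{Z}$.
\end{itemize}
Next I compute the fiber degrees. Using $F_{1}\cdot F_{2} = \lambda_{1}\lambda_{2}$ (from the formulas $\lambda_{1} = F_{1}\cdot F_{2}/\mu$ and $\lambda_{2}=\mu$), we get $d_{1}(\mathcal{E}) = \lambda_{1}s_{2}$ along $f_{1}$ and $d_{2}(\mathcal{E}) = \lambda_{2}s_{1}$ along $f_{2}$.

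The core step is a descent on $|r|$. By \cref{matrix} (and \cref{SMRFM}), a relative Fourier--Mukai transform along $f_{1}$ acts on the pair $(r,s_{2})$ through a matrix $\begin{bmatrix} c & a\lambda_{1}\\ d/\lambda_{1} & b\end{bmatrix}$ with $\lambda_{1}\mid d$. Up to tensoring with a line bundle, which does not change the rank, it acts the same way on $(s_{1},t)$. Transforms along $f_{2}$ act on $(r,s_{1})$ and $(s_{2},t)$ analogously.
\begin{itemize}
\item If the hypothesis of \cref{ERD}(1) holds for $f_{1}$, that is $(\lambda_{1}r,\lambda_{1}s_{2}) = (r,\lambda_{1}s_{2})$, or for $f_{2}$, that is $(\lambda_{2}r,\lambda_{2}s_{1}) = (r,\lambda_{2}s_{1})$, then \cref{ERD}(1) gives $\Phi$ directly and we are done.
\item Otherwise, for $f_{1}$ I choose a bottom row of $\Phi_{F}^{-1}$ so that the new rank equals the smallest attainable value. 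Concretely, the new rank can be taken to be $\gcd(r,\lambda_{1}s_{2})$ times a unit modulo the obstruction. The failure of the \cref{ERD}(1) condition means exactly that some prime $p\mid\lambda_{1}$ divides $r/\gcd(r,s_{2})$, so this new rank is a proper divisor of $r$ in absolute value unless $r \mid \lambda_{1}s_{2}$.
\end{itemize}
I would alternate between $f_{1}$ and $f_{2}$, producing a strictly decreasing sequence of $|r|$ as long as neither \cref{ERD}(1) condition holds, until the rank becomes $0$.

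The main obstacle is the stuck configuration, where $r\neq 0$, both gcd conditions fail, and $r$ already divides both $\lambda_{1}s_{2}$ and $\lambda_{2}s_{1}$, so no further descent happens. Here I would use (a) and (b).
\begin{itemize}
\item From $s_{1}s_{2}=rt$ and the gcd conditions failing, I expect to force $r \mid \lambda_{1}\lambda_{2}$ with $r$ sharing a prime with each $\lambda_{i}$. Since every $\lambda_{i}\in\{1,2,3,4,6\}$, only finitely many vectors remain. I would then push these, after further transforms, to the normal form $(\pm r_{0},0,0,*)$ with $r_{0}$ small.
\item Applying \cref{ERD}(2) first to make a fiber degree zero should reduce to $s_{1}=0$ or $s_{2}=0$.
\item In the case $\omega_{X}\not\simeq\mathcal{O}_{X}$, constraint (c) gives $\ord(\omega_{X}) = \lambda_{2}$ (see \cref{table}) dividing $r$. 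Combined with $\gcd=1$, this should make the \cref{ERD}(1) condition hold along $f_{1}$, a contradiction with being stuck.
\item In the case $\omega_{X}\simeq\mathcal{O}_{X}$, I expect the residual vector to be $(1,0,0,0)$ up to sign and standard autoequivalences. This is excluded by \cref{AVA}.
\end{itemize}
I expect this case analysis of the terminal configurations, in particular matching it with \cref{table} in small characteristic, to be the hardest part.
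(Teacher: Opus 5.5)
\textbf{Different route, with a gap in the terminal case.} You run a Euclidean-type descent on $|r|$ alternating between $f_{1}$ and $f_{2}$, then analyse the terminal ``stuck'' vectors using (a), (b), (c) and \cref{AVA}. This differs from the paper and can be completed. The descent step itself is sound: along $f_{1}$ you can even reach rank exactly $\pm\gcd(r,\lambda_{1}s_{2})$. But all the content is in the terminal case, which you only conjecture, and a wrong intermediate claim misdirects it.

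\textbf{The inverted criterion.} For $\lambda_{1}=p$ prime, the \cref{ERD}(1) condition along $f_{1}$ fails iff $p\mid \lambda_{1}s_{2}/\gcd(r,\lambda_{1}s_{2})$, i.e.\ iff $v_{p}(r)\le v_{p}(s_{2})$. So $p$ does \emph{not} divide $r/\gcd(r,s_{2})$, which is the opposite of what you state. Consequently stuck vectors need not share a prime with the $\lambda_{i}$. In type $(2,\mu_{2})$ the numerical vector $(1,0,0,0)$ satisfies (a) and (b) and is stuck.

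\textbf{The order of $\omega_X$.} The claim $\ord(\omega_{X})=\lambda_{2}$ is false for type $(6,1)$ in characteristic $2$ and $3$. You must first observe that being stuck forces $\lambda_{1},\lambda_{2}>1$. Then $X$ is non-cyclic, $\lambda_{1}=p$ is prime and $\lambda_{2}$ is a power of $p$, and only then may you invoke \cref{table}.

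\textbf{How the terminal case closes.} With the correct criterion the argument is short.
\begin{enumerate}
\item Being stuck means $r\mid s_{2}$ and $r\mid(\lambda_{2}/p)s_{1}$.
\item By (a), $t=s_{1}(s_{2}/r)$, so (b) becomes $\gcd(r,s_{1})=1$. Hence $r\mid\lambda_{2}/p\in\{1,2\}$.
\item For types $(2,2)$, $(3,3)$, $(4,2)$ this contradicts $\lambda_{2}\mid r$ from (c).
\item In type $(2,\mu_{2})$ it gives $r=\pm1$. A line bundle twist kills $s_{1}$ and $s_{2}$, and then (a) forces $t=0$. Finally \cref{AVA} excludes $(\pm1,0,0,0)$; it is applied to an equivalence from $D(Y)$, as the paper also does.
\end{enumerate}
No normal-form step is needed.

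\textbf{Comparison with the paper.} The paper skips the descent. In the non-cyclic case it uses primality of $\lambda_{1}$ to apply \cref{ERD}(2) once along $f_{1}$, making $s_{2}=0$ and hence $t=0$. Then $\gcd(r',s)=1$ and $\lambda_{2}\mid r'$ together give the \cref{ERD}(1) condition along $f_{2}$. Here $\lambda_{2}\mid r'$ comes from \cref{RIP}, or from \cref{AVA} in type $(2,\mu_{2})$. So at most two relative Fourier--Mukai transforms are needed. Your second terminal bullet is exactly this Step~1; using it from the outset makes your descent unnecessary.
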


\begin{proof}
    For any object $\mathcal{E} \in D(X)$ and $i \in \{1,2\}$, let the fiber degree of $\mathcal{E}$ be denoted by 
    \[
    d_{i} (\mathcal{E} ) = c_{1}(\mathcal{E}) \cdot F_{i}.
    \]

    \textit{Step 1.}
    We apply a relative Fourier--Mukai transform along $f_{1}$. 
    In the case $\lambda_{1}=1$,  the statement directly follows from \cref{ERD} (1).
    Thus we may assume $\lambda_{1} \neq 1$, and by \cref{table} we see that $\lambda_{1}$ is prime and $\lambda_{2} \neq 1$. 
    If 
    $(\lambda_{1} \rk (\mathcal{E}),d_{1}(\mathcal{E})) = (\rk (\mathcal{E}),d_{1}(\mathcal{E}))$, 
    the assertion follows again by \cref{ERD} (1). 
    
    Assume  $(\lambda_{1} \rk (\mathcal{E}),d_{1}(\mathcal{E})) \neq (\rk (\mathcal{E}),d_{1}(\mathcal{E}))$,
    which is equivalent to \ \\
    $\lambda_{1}(\rk (\mathcal{E}),d_{1}(\mathcal{E}))=(\lambda_{1} \rk (\mathcal{E}),d_{1}(\mathcal{E}))$
    as $\lambda_{1}$ is prime. By \cref{ERD} (2), there exists a relative Fourier--Mukai transform  $\Phi_{1}$ along $f_{1}$ such that $d_{1}(\Phi_{1}^{-1}(\mathcal{E}))=0$. 
    We set 
    $\mathcal{E}^{\prime} \coloneqq \Phi_{1}^{-1}(\mathcal{E})$
    and
    $r^{\prime} \coloneqq \rk (\mathcal{E}^{\prime})$.

    \textit{Step 2.}
    We apply a relative Fourier--Mukai transform along $f_{2}$. 
    Assume, for a contradiction, that 
    $(\lambda_{2} r^{\prime},d_{2}(\mathcal{E}^{\prime})) \neq 
    (r^{\prime},d_{2}(\mathcal{E}^{\prime}))$. 
    Then there exists a prime factor $p$ of $\lambda_{2}$ such that 
    $p$ divides 
    $d_{2}(\mathcal{E}^{\prime})/(r^{\prime},d_{2}(\mathcal{E}^{\prime}))$.
    
    We claim that $r^{\prime} \in \lambda_{2} \mathbb{Z}$.
    If the type of $X$ is not $(2, \mu_{2})$, this holds since $r^{\prime} \in \ord(\omega_{X}) \mathbb{Z}$ by \cref{RIP}, and $\ord(\omega_{X}) = \lambda_{2}$ by \cref{table}. 
    Assume the type of $X$ is $(2, \mu_{2})$ and $r^{\prime} \not\in \lambda_{2} \mathbb{Z}$. 
    By Step 1 and 
    \[
        0 = \langle (0,0,1) , (0,0,1) \rangle = \langle\ch(\mathcal{E}^{\prime}) ,\ch(\mathcal{E}^{\prime}) \rangle,
    \]
    we can write $\ch(\mathcal{E}^{\prime}) = (r^{\prime}, (s/\lambda_{1})F_{1} , 0)$ with some integer $s \in \mathbb{Z}$. 
    By \cref{COP}, there exist integers $x, y \in \mathbb{Z}$ such that $yr^{\prime} + xs =1$. 
    Note that $\lambda_{2}$ is prime.
    As $r^{\prime} \not\in \lambda_{2} \mathbb{Z}$, there exists an integer $n \in \mathbb{Z}$ such that $x + nr^{\prime} \in \lambda_{2} \mathbb{Z}_{>0}$. 
    Therefore, we have a relative Fourier--Mukai transform $\Phi_{2}$ along $f_{2}$ with
    \[
        \Phi_{2,F_{2}} =
        \begin{bmatrix}
            y -sn & (x + nr^{\prime})/\lambda_{2} \\
            -s\lambda_{2} & r^{\prime} \\
        \end{bmatrix}.
    \]
    Then we can calculate
    \[
        \begin{bmatrix}
            \rk (\Phi_{2}(\mathcal{E}^{\prime})) \\
            d_{2} (\Phi_{2}(\mathcal{E}^{\prime})) \\
        \end{bmatrix}
        =
        \begin{bmatrix}
            y -sn & (x + nr^{\prime})/\lambda_{2} \\
            -s\lambda_{2} & r^{\prime} \\
        \end{bmatrix}
        \begin{bmatrix}
            r^{\prime} \\
            s\lambda_{2} \\
        \end{bmatrix}
        =
        \begin{bmatrix}
            1 \\
            0 \\
        \end{bmatrix}.
    \]
    Thus we obtain 
    $( \Phi_{2} \circ \Phi_{1}^{-1} \circ \Psi)^{A}(0,0,1) 
    = \Phi_{2}^{A} (\ch(\mathcal{E}^{\prime})) = (1,(u/\lambda_{2})F_{2},0)$ with some $u \in \mathbb{Z}$.
    Let $\Phi_{3} \coloneqq (-) \otimes \mathcal{O}((-u/\lambda_{2})F_{2}) \in \Auteq D(X)$ be line bundle tensoring.
    Then we have 
    $(\Phi_{3} \circ \Phi_{2} \circ \Phi_{1}^{-1} \circ \Psi)^{A}(0,0,1) 
    = (1,0,0)$
    which contradicts \cref{AVA}. 
    Thus, the claim holds.

    We can write 
    \[
       \ch(\mathcal{E}^{\prime}) = \expar{ n \lambda_{2}, \frac{s}{\lambda_{1}}F_{1} , 0}
    \]
    with $n,s \in \mathbb{Z}$. Thus we have
    \[
        \frac{d_{2}(\mathcal{E}^{\prime})}{(\rk (\mathcal{E}^{\prime}),d_{2}(\mathcal{E}^{\prime}))} 
        = \frac{s\lambda_{2}}{(n \lambda_{2},s\lambda_{2})} 
        = \frac{s\lambda_{2}}{\lambda_{2}(n,s)}
        = \frac{s}{(n,s)}.
    \]
    In particular, this implies that $p \mid s$.
    Since $\lambda_2$ and $s$ have a common divisor $p$, this contradicts \cref{COP}. 
    Therefore  
    $(\lambda_{2} \rk (\mathcal{E}^{\prime}),d_{2}(\mathcal{E}^{\prime})) = 
    (\rk (\mathcal{E}^{\prime}),d_{2}(\mathcal{E}^{\prime}))$, and hence the statement follows by \cref{ERD} (1). 
\end{proof}

\counterwithin{thm}{section}
\setcounter{thm}{\value{breakpoint2}}

%% file: sections/section3/sub2.tex
\subsection{The proof of \cref{Prop:frac} and \cref{Prop:sheaf}} \label{SUB2}

\begin{lem} \label{rigid}
    Let $X$ be a bielliptic surface with $\omega_{X} \simeq \mathcal{O}_{X}$. Then there does not exist a rigid sheaf.
\end{lem}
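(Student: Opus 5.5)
The plan is to split according to the rank of the sheaf and to play the rigidity of $\mathcal{E}$ against the positive-dimensional family of twists $\mathcal{E} \otimes \mathcal{L}$, $\mathcal{L} \in \Pic^{0}(X)$. Let $\mathcal{E} \neq 0$ be a coherent sheaf with $\Ext^{1}_{X}(\mathcal{E},\mathcal{E}) = 0$, and put $r \coloneqq \rk(\mathcal{E})$.

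If $r = 0$, then $\mathcal{E}$ is a rigid torsion sheaf, which \cref{NRT} already excludes. So assume $r > 0$.

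First I would try the trace argument from the proof of \cref{BIS}. The composition $\tr \circ \I \colon H^{1}(X,\mathcal{O}_{X}) \to \Ext^{1}_{X}(\mathcal{E},\mathcal{E}) \to H^{1}(X,\mathcal{O}_{X})$ is multiplication by $r$. If $\Char(k) \nmid r$, this map is injective, so $\dim \Ext^{1}_{X}(\mathcal{E},\mathcal{E}) \ge h^{1}(\mathcal{O}_{X}) = q(X) = 2$, a contradiction. The difficulty is that $\omega_{X} \simeq \mathcal{O}_{X}$ only happens when $\Char(k) \in \{2,3\}$, and then $r$ may be divisible by the characteristic. So a geometric argument is needed that does not depend on $r$ being invertible in $k$.

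For the general case I would use the reduced Picard variety $P \coloneqq \underline{\Pic}^{0}(X)_{\red}$, an elliptic curve (as used in \cref{AVA}), together with the restricted Poincar\'e bundle $\mathcal{U}'$ on $P \times X$. This gives the family $\mathcal{E}_{t} \coloneqq \mathcal{E} \otimes \mathcal{U}'_{t}$ of coherent sheaves on $X$, flat over the connected curve $P$, with $\mathcal{E}_{0} \simeq \mathcal{E}$.

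The key step is the standard openness statement for rigid objects: if $\Ext^{1}(\mathcal{E}_{0},\mathcal{E}_{0}) = 0$, then $\mathcal{E}_{t} \simeq \mathcal{E}_{0}$ for all $t$ in a Zariski-open neighbourhood of $0$. I would prove it via semicontinuity applied to the relative $\mathcal{H}om$ from $p_{X}^{\ast}\mathcal{E}$ to the family: vanishing of $\Ext^{1}$ at $0$ lets a fixed morphism $\mathcal{E}_{0} \to \mathcal{E}_{0}$ extend over a neighbourhood, and being an isomorphism is an open condition. Since the isomorphism class is then locally constant and $P$ is connected, I expect $\mathcal{E} \otimes \mathcal{L} \simeq \mathcal{E}$ for every $\mathcal{L} \in P(k)$. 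I expect this openness step to be the main obstacle, since it must be made precise over a base that is only the reduced Picard scheme, in small characteristic. If the neighbourhood version is awkward, it suffices to work on a dense open subset of $P$, because the set of $\mathcal{L}$ with $\mathcal{E} \otimes \mathcal{L} \simeq \mathcal{E}$ is a subgroup.

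Finally, take determinants. Since $\det(\mathcal{E} \otimes \mathcal{L}) \simeq \det(\mathcal{E}) \otimes \mathcal{L}^{\otimes r}$ for a sheaf of rank $r$ (for example via a finite locally free resolution), we get $\mathcal{L}^{\otimes r} \simeq \mathcal{O}_{X}$ for all $\mathcal{L} \in P(k)$, at least for $\mathcal{L}$ in a dense open set and hence on the subgroup it generates. But multiplication by $r \neq 0$ on the elliptic curve $P$ is surjective on $k$-points, and in particular not identically zero. This is a contradiction, so no rigid sheaf exists.
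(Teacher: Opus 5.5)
Your proof is correct, and it takes a genuinely different route from the paper's. The paper first passes to the torsion-free quotient of the rigid sheaf and then to its last Harder--Narasimhan factor. Each step applies \cref{mukai}, with the vanishing hypotheses supplied by slopes and by Serre duality with $\omega_X\simeq\mathcal{O}_X$. This reduces to a rigid $\mu$-semistable torsion-free sheaf $\mathcal{G}$, and the paper then plays Koseki's Bogomolov--Gieseker inequality in positive characteristic, $\Delta(\mathcal{G})\ge 0$, against $\Delta(\mathcal{G})=-\chi(\mathcal{G},\mathcal{G})\le -2+\dim\Ext^1_X(\mathcal{G},\mathcal{G})$. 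You instead show that rigidity forces $\mathcal{E}\otimes\mathcal{L}\simeq\mathcal{E}$ for all $\mathcal{L}\in\Pic^0(X)$, and then rule this out with determinants.

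The step you flag as the main obstacle is routine. $P$ is a smooth connected curve, the family $p_X^{\ast}\mathcal{E}\otimes\mathcal{U}'$ is $P$-flat, and every fibre $\mathcal{E}\otimes\mathcal{L}$ is rigid. At any $t_1\in P$, cohomology and base change for the perfect complex $\mathbf{R}p_{P\ast}\mathbf{R}\mathcal{H}om(p_X^{\ast}\mathcal{E}_{t_1},p_X^{\ast}\mathcal{E}\otimes\mathcal{U}')$ shows that its $\mathcal{H}^0$ is locally free near $t_1$ with fibres $\Hom_X(\mathcal{E}_{t_1},\mathcal{E}_t)$. So the identity lifts to a morphism of families whose cokernel is supported over a closed subset of $P$ not containing $t_1$, and constancy of Hilbert polynomials makes it an isomorphism near $t_1$. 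Neither the non-reducedness of the Picard scheme nor the characteristic plays any role, and your trace paragraph can be dropped.

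What your route buys: it avoids the one deep input of the paper's proof, the Bogomolov--Gieseker inequality in characteristic $2$ and $3$, which is exactly where $\omega_X\simeq\mathcal{O}_X$ occurs. It also avoids the Harder--Narasimhan reduction. It shows more generally that a smooth projective variety with $\dim\Pic^0>0$ has no rigid sheaf of positive rank; only the rank-zero case (\cref{NRT}) is specific to bielliptic surfaces.

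What the paper's route buys: its chain of inequalities gives $\dim\Ext^1_X(\mathcal{F},\mathcal{F})\ge 2$ directly, and that is the form actually used in \cref{Prop:not-simple-sheaf}. From your statement you recover this bound by parity, since $\dim\Ext^1_X(\mathcal{F},\mathcal{F})=2\dim\Hom_X(\mathcal{F},\mathcal{F})+\Delta(\mathcal{F})$ and $\Delta(\mathcal{F})$ is even because $\Num(X)$ is an even lattice.
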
 

\begin{proof}
    We argue by contradiction and suppose that there exists a rigid coherent sheaf $\mathcal{F}$ on $X$. 
    Note that $\mathcal{F}$ is not a torsion sheaf by \cref{NRT}.
    We apply \cref{mukai} to the short exact sequence
    \[
        0 \to \Tor (\mathcal{F}) \to \mathcal{F} \to \mathcal{F} / \Tor (\mathcal{F}) \to 0
    \]
    and we know that $\mathcal{F} / \Tor (\mathcal{F})$ is also a rigid coherent sheaf.
    Let
    \[
        0=\mathcal{G}_{0} \subset \mathcal{G}_{1} \subset \cdots \subset \mathcal{G}_{l} = \mathcal{F} / \Tor (\mathcal{F})
    \]
    be the Harder--Narasimhan filtration of $\mathcal{F} / \Tor (\mathcal{F})$. By \cite[Lemma 1.3.3]{huybrechts2010geometry} we know $\Hom_{X} (\mathcal{G}_{l-1}, \mathcal{G}_{l}/\mathcal{G}_{l-1}) =0$. 
    We apply \cref{mukai} to the short exact sequence
    \[
        0 \to \mathcal{G}_{l-1} \to \mathcal{G}_{l} \to \mathcal{G}_{l}/\mathcal{G}_{l-1} \to 0
    \]
    and get a rigid $\mu$-semistable sheaf $\mathcal{E} \coloneqq \mathcal{G}_{l}/\mathcal{G}_{l-1}$.
    Then we have 
    \[
        \Delta(\mathcal{E}) \coloneqq\ch_{1}(\mathcal{E})^{2} - 2 \ch_{0}(\mathcal{E})\ch_{2}(\mathcal{E}) \ge 0
    \]
    by \cite[Theorem 1.1]{10.1093/imrn/rnac260}. On the other hand, we compute
    \begin{align*}
        \Delta(\mathcal{E})
        &= - \chi(\mathcal{E}, \mathcal{E}) \tag*{(Riemann--Roch)} \\
        &= - 2 \dim \Hom_X(\mathcal{E}, \mathcal{E})
        + \dim \Ext^1_X(\mathcal{E}, \mathcal{E}) \\
        &\le -2 + \dim \Ext^1_X(\mathcal{E}, \mathcal{E}).
    \end{align*}
    Therefore we obtain $\dim \Ext^{1}_{X}(\mathcal{E}, \mathcal{E}) \ge 2$. This is a contradiction by the rigidity of $\mathcal{E}$.
\end{proof}

\begin{lem} \label{Prop:not-simple-sheaf}
    Let $X$ be a bielliptic surface with $\omega_{X} \simeq$ $\mathcal{O}_{X}$. Take any $\Phi \in \Auteq D(X)$. Then $\Phi$ is a sheaf transform. 
\end{lem}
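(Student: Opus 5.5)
The plan is to adapt Bridgeland's argument for abelian/K3 surfaces: consider the spectral sequence computing $\Ext$ groups of the object $\mathcal{E}:=\Phi(\mathcal{O}_{x})$ from its cohomology sheaves. Fix $x\in X$ and write $\mathcal{H}^{i}:=\mathcal{H}^{i}(\mathcal{E})$. Since $\Phi$ is an equivalence, $\Hom(\mathcal{E},\mathcal{E})=k$, $\Ext^{1}(\mathcal{E},\mathcal{E})\simeq k^{2}$, $\Ext^{2}(\mathcal{E},\mathcal{E})\simeq k$ and $\Ext^{p}(\mathcal{E},\mathcal{E})=0$ for $p<0$. We use the spectral sequence (\cite[Lemma 3.1]{...}-type, as in \cite{MR2244106})
\[
E_{2}^{p,q}=\bigoplus_{i}\Ext^{p}_{X}(\mathcal{H}^{i},\mathcal{H}^{i+q})\Longrightarrow \Ext^{p+q}_{X}(\mathcal{E},\mathcal{E}).
\]
Let $m$ be minimal and $M$ maximal with $\mathcal{H}^{m},\mathcal{H}^{M}\neq 0$, and suppose $m<M$. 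The term $E_{2}^{0,m-M}=\Hom(\mathcal{H}^{M},\mathcal{H}^{m})$ is on the edge and survives, and it must vanish since negative Ext groups of $\mathcal{E}$ vanish. Similarly $E_{2}^{1,q}$ for the lowest relevant $q<0$ survives to $E_{\infty}$ and must vanish.

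The key input is $\omega_{X}\simeq\mathcal{O}_{X}$. By Serre duality $\Ext^{2}(\mathcal{H}^{i},\mathcal{H}^{j})\simeq\Hom(\mathcal{H}^{j},\mathcal{H}^{i})^{\vee}$, so the spectral sequence is symmetric. Following Bridgeland's Lemma for surfaces with trivial canonical bundle, the terms $E_{2}^{1,0}=\bigoplus_{i}\Ext^{1}(\mathcal{H}^{i},\mathcal{H}^{i})$ inject into $\Ext^{1}(\mathcal{E},\mathcal{E})$. The reason is that the differentials into and out of $E^{1,0}$ originate from or land in $E^{-1,*}$ or $E^{3,*}$, which are zero on a surface, and the $d_{2}$ from $E^{-1,1}$ vanishes. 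Hence
\[
\sum_{i}\dim\Ext^{1}_{X}(\mathcal{H}^{i},\mathcal{H}^{i})\le 2 .
\]
On the other hand, by \cref{rigid} no nonzero coherent sheaf on $X$ is rigid, so each nonzero $\mathcal{H}^{i}$ contributes at least $1$. Thus at most two cohomology sheaves are nonzero, and if two are, each has $\dim\Ext^{1}=1$.

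The obstacle is to exclude the case of exactly two nonzero cohomology sheaves $\mathcal{H}^{m},\mathcal{H}^{M}$, each with one-dimensional $\Ext^{1}$. My plan is to sharpen \cref{rigid} to show that every nonzero sheaf $\mathcal{F}$ on $X$ has $\dim\Ext^{1}(\mathcal{F},\mathcal{F})\ge 2$. For this I would repeat the proof of \cref{rigid} with \cref{mukai}: pass to the torsion part or torsion-free quotient, and then to a Harder--Narasimhan factor $\mathcal{G}$. For torsion-free $\mu$-semistable $\mathcal{G}$, the Bogomolov-type inequality $\Delta(\mathcal{G})\ge0$ gives $\dim\Ext^{1}\ge 2\dim\Hom\ge2$, exactly as in \cref{rigid}. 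For torsion sheaves, the computation of \cref{NRT} gives $\dim\Ext^{1}(\mathcal{T},\mathcal{T})=2\dim\Hom(\mathcal{T},\mathcal{T})+D^{2}\ge 2$ by \cref{ED2}. The subtle point is the reduction step: \cref{mukai} requires the vanishing conditions $\Hom(\mathcal{G}_{2},\mathcal{G}_{1})=\Ext^{2}(\mathcal{G}_{1},\mathcal{G}_{2})=0$. For the torsion/torsion-free sequence these hold via Serre duality with trivial $\omega_{X}$, and for the Harder--Narasimhan sequence they follow from slope considerations. If this reduction proves too delicate, I would instead use the trace map argument of \cref{BIS}: $H^{1}(\mathcal{O}_{X})\to\Ext^{1}(\mathcal{F},\mathcal{F})$ is injective whenever the rank is prime to $\Char(k)$, and $h^{1}(\mathcal{O}_{X})=2$.

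Granting this sharpened bound, $\sum_{i}\dim\Ext^{1}(\mathcal{H}^{i},\mathcal{H}^{i})\le 2$ forces exactly one nonzero cohomology sheaf. Therefore $\Phi(\mathcal{O}_{x})$ is a shift of a sheaf for every $x$, that is, $\Phi$ is a sheaf transform.
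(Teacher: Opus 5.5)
Your proposal is correct and follows the paper's proof. The $E_{2}^{1,0}$ term of the spectral sequence survives to $E_{\infty}$, which only needs $\dim X=2$, so $\sum_{i}\dim\Ext^{1}_{X}(\mathcal{H}^{i},\mathcal{H}^{i})\le 2$; then each nonzero $\mathcal{H}^{i}$ contributes at least $2$. Your sharpening of \cref{rigid} to $\dim\Ext^{1}_{X}(\mathcal{F},\mathcal{F})\ge 2$ for every nonzero sheaf is exactly what the paper uses implicitly when it cites \cref{rigid}, since that lemma's proof in fact yields this bound: the torsion case comes from the computation in \cref{NRT} with \cref{ED2}, the semistable torsion-free case from Bogomolov--Gieseker, and the pieces are combined via \cref{mukai} using trivial $\omega_{X}$.
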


\begin{proof}
    Take any $x \in X$. By \cref{rigid}, we have
    \[
        \dim \Ext_{X}^{1} (\mathcal{H}^{i}(\Phi(\mathcal{O}_{x})) , \mathcal{H}^{i}(\Phi(\mathcal{O}_{x}))) \ge 2
    \]
    for all $i \in \mathbb{Z}$ with $\mathcal{H}^{i}(\Phi(\mathcal{O}_{x})) \neq 0$.
    On the other hand, for $\mathcal{E}, \mathcal{F} \in D(X)$, we have the following spectral sequence (see \cite[I\hspace{-1.2pt}V \S2]{gelfand2013methods}):
    \[
        E_{2}^{p,q} = \bigoplus_{i} \Ext_{X}^{p} (\mathcal{H}^{i}(\mathcal{E}), \mathcal{H}^{i+q}(\mathcal{F}) ) \Rightarrow \Ext_{X}^{p+q} (\mathcal{E},\mathcal{F}).
    \]
    Since the $E_{2}^{1,0}$ term survives to infinity, we obtain
    \begin{eqnarray*}
         \sum_{i} \dim \Ext_{X}^{1} (\mathcal{H}^{i}(\Phi(\mathcal{O}_{x})) , \mathcal{H}^{i}(\Phi(\mathcal{O}_{x}))) 
        &\le&
        \dim \Ext_{X}^{1} (\Phi(\mathcal{O}_{x}) , \Phi(\mathcal{O}_{x})) \\
        &=& \dim \Ext_{X}^{1} (\mathcal{O}_{x} , \mathcal{O}_{x}) \\
        &=& 2.
    \end{eqnarray*}
    Hence the assertion follows.
\end{proof}

\begin{lem} \label{frac_not_simple}
    Let $X$ be a bielliptic surface with $\omega_{X} \simeq \mathcal{O}_{X}$. Take $\Psi \in \Auteq D(X)$. Suppose there exist $s,t \in \mathbb{Z}$ and $i \in \{1,2 \}$ such that $\Psi^{A}(0,0,1)= \expar{ 0,\frac{s}{\lambda_{i}} F_{i},t}.$ Then $s \in \lambda_{i}\mathbb{Z}$.
\end{lem}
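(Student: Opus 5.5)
The plan is to use \cref{Prop:not-simple-sheaf} to turn the numerical hypothesis into a statement about a single sheaf, and then read off $c_{1}$ from its support. Fix $x \in X$. By \cref{Prop:not-simple-sheaf}, after composing $\Psi$ with a shift we may assume $\mathcal{E}_{x} \coloneqq \Psi(\mathcal{O}_{x})$ is a sheaf. It satisfies
\[
\ch(\mathcal{E}_{x}) = \expar{0, \tfrac{s}{\lambda_{i}}F_{i}, t}.
\]
It is simple, since $\Hom_{X}(\mathcal{E}_{x},\mathcal{E}_{x}) \simeq \Hom_{X}(\mathcal{O}_{x},\mathcal{O}_{x}) = k$.

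\emph{Step 1: the support of $\mathcal{E}_{x}$.} If $s=0$ there is nothing to prove, so assume $s \neq 0$. Then $\mathcal{E}_{x}$ has rank $0$ and nonzero $c_{1}$, so its support has a one-dimensional part, and $c_{1}(\mathcal{E}_{x})$ is an effective cycle. This forces $s>0$. Since $c_{1}(\mathcal{E}_{x}) \cdot F_{i} = 0$, every curve component of the support is contracted by $f_{i}$, i.e.\ lies in a fiber of $f_{i}$. A simple sheaf has connected support, because otherwise it splits as a direct sum. Hence $\Supp \mathcal{E}_{x}$ is contained in a single fiber $f_{i}^{-1}(q(x))$.

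\emph{Step 2: smooth fibers give the claim.} If $f_{i}^{-1}(q(x))$ is a smooth fiber, it is an integral curve of class $F_{i}$. Then
\[
c_{1}(\mathcal{E}_{x}) = \operatorname{length}_{\eta}(\mathcal{E}_{x})\,F_{i},
\]
where $\eta$ is the generic point of the fiber. This is an integer multiple of $F_{i}$, so $s/\lambda_{i} \in \mathbb{Z}$, as desired. For $i=1$ this finishes the proof, since $f_{1}$ is smooth. For $i=2$ the obstacle is a multiple fiber $mF'$ with $F' = F_{2}/m$: a sheaf supported there only has $c_{1} \in \mathbb{Z}F'$. Since $\ch(\mathcal{E}_{x})$ does not depend on $x$, it suffices to find one $x$ whose fiber $f_{2}^{-1}(q(x))$ is not multiple.

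\emph{Step 3 (main obstacle): the support fibers move.} I would show that $q(x)$ is not constant in $x$. Then, since $f_{2}$ has only finitely many multiple fibers, $q(x)$ avoids them for suitable $x$, and Step 2 applies. Suppose instead that all $\mathcal{E}_{x}$ are supported in one fixed fiber $C_{0}$. Take $z \notin C_{0}$. Then $\Hom^{\bullet}_{X}(\mathcal{E}_{x}, \mathcal{O}_{z}) = 0$ for every $x$, so
\[
\Hom^{\bullet}_{X}(\mathcal{O}_{x}, \Psi^{-1}(\mathcal{O}_{z})) = 0 \quad \text{for all } x \in X.
\]
This forces $\Psi^{-1}(\mathcal{O}_{z}) = 0$, because skyscrapers of closed points detect nonzero objects, and this is a contradiction. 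In fact the same argument shows that $\bigcup_{x}\Supp\mathcal{E}_{x}$ is all of $X$, so infinitely many distinct fibers occur. The only care needed is to justify that $\mathcal{E}_{x}$ is a sheaf for every $x$ simultaneously, with a uniform shift. Here I would use that the kernel of $\Psi$ is then a sheaf up to shift, flat over the first factor, so the shift is locally constant and hence constant on the connected surface $X$. With this, the argument shows $s \in \lambda_{i}\mathbb{Z}$.
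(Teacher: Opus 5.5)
Your proof is correct and follows essentially the same route as the paper. In both, $\Psi(\mathcal{O}_{x})$ is a simple sheaf supported in a single fiber of $f_{i}$, a non-integral multiple of $F_{i}$ can only come from a multiple fiber of $f_{2}$, and the images of points cannot all lie in the finitely many multiple fibers. The paper gets that last contradiction from the support of the kernel ($\Supp \mathcal{P} \subset X \times Z$), while you use $\Hom^{\bullet}_{X}(\mathcal{O}_{x},\Psi^{-1}(\mathcal{O}_{z}))=0$ for all $x$, which works equally well. It is your ``union of supports is all of $X$'' version, not mere non-constancy of the fiber, that does the work. The uniform-shift justification is not needed: shifts do not change supports, and an odd shift only replaces $(s,t)$ by $(-s,-t)$.
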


\begin{proof}
    If necessary applying the shift functor, we conclude that $\Psi(\mathcal{O}_{x})$ is a sheaf by \cref{Prop:not-simple-sheaf}.
    Then $D \coloneqq \ch_{1}(\Psi(\mathcal{O}_{x}))$ is a connected effective divisor,
    since $\Psi(\mathcal{O}_{x})$ is a simple sheaf of dimension one.
    Suppose $s \notin \lambda_{i}\mathbb{Z}$. Then we deduce $i = 2$ because $f_{1}$ has no multiple fibers. 
    Hence the support of $D$ is contained in a multiple fiber of $f_{2}$.
    Set $Z \coloneqq \cup \Supp F_{i}$, where $F_{i}$ is a multiple fiber of $f_{2}$. We have $\Supp \Psi (\mathcal{O}_{x}) \subset Z$ for any $x \in X$. 
    Let $\mathcal{P}$ be a Fourier--Mukai kernel of $\Psi$.
    Then, we obtain $\Supp (\mathcal{P}) \subset X \times Z$. Thus Supp $\Supp \Psi (\mathcal{E}) \subset Z$ for any $\mathcal{E} \in D(X)$. 
    This contradicts the fact that $\Psi$ is an autoequivalence.
\end{proof}

\newcounter{breakpoint6}
\setcounter{breakpoint6}{\value{thm}}
\counterwithin{thm}{section}
\setcounter{thm}{\value{breakpoint5}}

\begin{prop} \label{SCS}
    \textup{(}Restated\textup{)} 
    Let $X$ be a bielliptic surface and take \\
    $\Phi \in \Auteq D(X)$. Then $\Phi$ is a sheaf transform. 
\end{prop}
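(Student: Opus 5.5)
The case $\omega_{X} \simeq \mathcal{O}_{X}$ is already settled by \cref{Prop:not-simple-sheaf}. So I will assume $\omega_{X} \not\simeq \mathcal{O}_{X}$. By the remark after \cref{RIP}, $n \coloneqq \ord(\omega_{X})$ is then finite and coprime to $\Char(k)$. The plan is to pass to the canonical cover $p \colon \tilde{X} \to X$ and use \cref{LCC}: $\Phi$ has a lift $\tilde{\Phi} \in \Auteq D(\tilde{X})$ with $p^{\ast} \circ \Phi \simeq \tilde{\Phi} \circ p^{\ast}$ and $p_{\ast} \circ \tilde{\Phi} \simeq \Phi \circ p_{\ast}$. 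By the lemma at the end of \S\ref{canonical_cover}, $\tilde{X}$ is either an abelian surface or a bielliptic surface. Since $K_{\tilde{X}}$ is trivial, in the bielliptic case $\omega_{\tilde{X}} \simeq \mathcal{O}_{\tilde{X}}$ and \cref{Prop:not-simple-sheaf} applies to $\tilde{X}$. In the abelian case I will use the classical fact (Mukai, Orlov) that every autoequivalence of an abelian variety is a sheaf transform; its proof uses only semi-homogeneity and works in any characteristic. Either way, $\tilde{\Phi}(\mathcal{O}_{\tilde{x}})$ is a shift of a sheaf for every closed point $\tilde{x} \in \tilde{X}$.

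Now fix $x \in X$ and pick $\tilde{x} \in p^{-1}(x)$. Since $p$ is étale, $p_{\ast}\mathcal{O}_{\tilde{x}} \simeq \mathcal{O}_{x}$. Hence
\[
    \Phi(\mathcal{O}_{x}) \simeq \Phi(p_{\ast}\mathcal{O}_{\tilde{x}}) \simeq p_{\ast}\tilde{\Phi}(\mathcal{O}_{\tilde{x}}).
\]
The morphism $p$ is finite, so $p_{\ast}$ is exact and takes a shifted sheaf $\mathcal{G}[m]$ to the shifted sheaf $p_{\ast}\mathcal{G}[m]$. Therefore $\Phi(\mathcal{O}_{x})$ is a shift of a sheaf.

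The only real checks are (i) that \cref{LCC} applies, i.e.\ $n$ is coprime to $\Char(k)$, which \cref{table} confirms, and (ii) that the abelian-surface input holds in positive characteristic. For (ii), if the abelian result is awkward to cite in characteristic $p$, the fallback is to imitate \cref{rigid} and \cref{Prop:not-simple-sheaf} directly on $\tilde{X}$. There $\chi(\mathcal{E},\mathcal{E}) = -\Delta(\mathcal{E})$ because $\chi(\mathcal{O}_{\tilde{X}})=0$ and $K_{\tilde{X}}=0$, and a Bogomolov-type inequality for rigid semistable sheaves shows that $\tilde{X}$ has no rigid sheaves. Then, exactly as in \cref{Prop:not-simple-sheaf}, the spectral sequence bound $\sum_{i} \dim \Ext^{1}(\mathcal{H}^{i},\mathcal{H}^{i}) \le 2$ forces a single nonzero cohomology sheaf. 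I expect step (ii) to be the main obstacle; the descent along $p_{\ast}$ is formal.
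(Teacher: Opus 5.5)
Your proposal is correct and follows the paper's proof. You reduce to $\omega_X \not\simeq \mathcal{O}_X$, lift $\Phi$ to the canonical cover via \cref{LCC}, invoke \cref{Prop:not-simple-sheaf} for a bielliptic cover or Orlov's result for an abelian cover, and descend along $p_{\ast}$; your explicit use of $p_{\ast}\mathcal{O}_{\tilde{x}} \simeq \mathcal{O}_{x}$ and exactness of $p_{\ast}$ for finite $p$ is exactly what the paper compresses into ``the commutativity of the lifting diagram.''
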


\begin{proof}
    It suffices to prove the statement in the case $\omega_{X} \not \simeq \mathcal{O}_{X}$ by \cref{Prop:not-simple-sheaf}.
    In \cref{canonical_cover}, we have seen that the canonical cover of $X$ is an abelian surface or a bielliptic surface with the canonical sheaf trivial.
    By \cref{LCC}, there exists $\tilde{\Phi} \in \Auteq D(\tilde{X})$ which is a lift of $\Phi$. 
    Then $\tilde{\Phi}$ is a sheaf transform by \cref{SCS} and the fact that any equivalences on abelian varieties are sheaf transforms by \cite[Proposition 3.2]{MR1921811}.  Hence $\Phi$ is a sheaf transform by the commutativity of the lifting diagram.
\end{proof}

\counterwithin{thm}{section}
\setcounter{thm}{\value{breakpoint6}}

\newcounter{breakpoint4}
\setcounter{breakpoint4}{\value{thm}}
\counterwithin{thm}{section}
\setcounter{thm}{\value{breakpoint3}}

\begin{prop}
    \textup{(}Restated\textup{)} 
    Let $X$ be a bielliptic surface and take $\Psi \in \Auteq D(X)$. Suppose there exist $s,t \in \mathbb{Z}$ and $i \in \{1,2 \}$ such that $\Psi^{A}(0,0,1)= \expar{ 0,\frac{s}{\lambda_{i}} F_{i},t}.$ Then $s \in \lambda_{i}\mathbb{Z}$.
\end{prop}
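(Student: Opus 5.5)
We follow the same route as in the proof of \cref{frac_not_simple}. The difference is that we now have \cref{SCS}, which holds for all bielliptic surfaces, and it replaces \cref{Prop:not-simple-sheaf}.

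\textbf{Step 1: reduce to a sheaf.} Fix a point $x \in X$. By \cref{SCS}, after composing $\Psi$ with a shift we may assume $\mathcal{E} \coloneqq \Psi(\mathcal{O}_{x})$ is a sheaf. A shift only changes the sign of $\Psi^{A}(0,0,1)$, so it does not affect whether $s \in \lambda_{i}\mathbb{Z}$.

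\textbf{Step 2: identify the support.} Since $\ch(\mathcal{E}) = (0,(s/\lambda_{i})F_{i},t)$, the sheaf $\mathcal{E}$ is torsion of rank $0$. It is simple, because $\Hom(\mathcal{E},\mathcal{E}) \simeq \Hom(\mathcal{O}_{x},\mathcal{O}_{x}) = k$. Hence $\Supp \mathcal{E}$ is connected, and $D \coloneqq \ch_{1}(\mathcal{E})$ is the class of an effective divisor. If $s = 0$ there is nothing to prove, so assume $s \neq 0$. Then $D \neq 0$ and $D$ is numerically $(s/\lambda_{i})F_{i}$ with $s>0$.

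Because $D \cdot F_{i} = 0$, every component of $D$ is vertical for $f_{i}$. Because $\Supp \mathcal{E}$ is connected, these components all lie in a single fibre of $f_{i}$. Now suppose $s \notin \lambda_{i}\mathbb{Z}$. Then $D$ is not numerically an integer multiple of $F_{i}$. A divisor supported in one fibre has class a rational multiple of $F_{i}$, and if that fibre has multiplicity $m$ its class lies in $(1/m)F_{i}\mathbb{Z}$. So the fibre must be a multiple fibre. For $i=1$ there are none, since $f_{1}$ is smooth, which gives a contradiction at once. For $i=2$ we conclude $\Supp \Psi(\mathcal{O}_{x}) \subset Z$, where $Z$ is the finite union of the multiple fibres of $f_{2}$.

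\textbf{Step 3: contradiction via the kernel.} The point $x$ was arbitrary, and $\Psi^{A}(0,0,1)$ does not depend on $x$, so the argument of Step 2 applies to every $x$. We must also check that the required shift is the same for all $x$. This holds because the amplitude of $\Psi(\mathcal{O}_{x})$ is locally constant in $x$ and $X$ is connected.

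Let $\mathcal{P}$ be the Fourier--Mukai kernel of $\Psi$. Since $\mathcal{P}|_{\{x\}\times X} \simeq \Psi(\mathcal{O}_{x})$, every fibre of $\Supp\mathcal{P} \to X$ over a closed point lies in $Z$, so $\Supp \mathcal{P} \subset X \times Z$. Then $\Psi(\mathcal{E}')$ is supported in $Z$ for every $\mathcal{E}' \in D(X)$. In particular $\mathcal{O}_{X}$ is not in the essential image of $\Psi$, contradicting that $\Psi$ is an equivalence.

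\textbf{Main obstacle.} The delicate step is Step 2. We need that a connected effective divisor with $D\cdot F_{2}=0$ lies in one fibre, which is Zariski's lemma. We also need that its class can fail to be an integer multiple of $F_{2}$ only when the fibre is multiple, so that $\Supp \mathcal{E}$ lands inside $Z$. The uniformity of the shift in $x$ in Step 3 is routine but should be stated.
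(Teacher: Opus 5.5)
Your proposal is correct and follows the paper's proof, which reruns the argument of \cref{frac_not_simple} with \cref{Prop:sheaf} in place of \cref{Prop:not-simple-sheaf}; your Step 2 spells out the fibre-multiplicity reasoning that the paper leaves implicit. One minor remark: the uniformity of the shift in Step 3 is not needed, since the support of $\Psi(\mathcal{O}_{x})$ is unchanged by shifts, so $\Supp \Psi(\mathcal{O}_{x}) \subset Z$ follows pointwise using whatever shift makes $\Psi(\mathcal{O}_{x})$ a sheaf.
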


\begin{proof}
    It is enough to prove the statement in the case of $\omega_{X} \not \simeq \mathcal{O}_{X}$.
    However, it can be shown in the same way as \cref{frac_not_simple} by using \cref{Prop:sheaf}.
\end{proof}

\counterwithin{thm}{section}
\setcounter{thm}{\value{breakpoint4}}

%% file: sections/section4/section4_main.tex
\section{Consequences of \cref{section3}}

\input{sections/section4/app1}
\input{sections/section4/app0}

%% file: sections/section4/app1.tex
Let $\Gamma (\lambda_{i})$ denote the subgroup of $\SL_{2}(\mathbb{Z})$ as
\[
\Gamma (\lambda_{i}) \coloneqq 
\inset{
\begin{bmatrix}
    c & a \\
    d & b \\
\end{bmatrix} 
\in 
\SL_{2}(\mathbb{Z}) }{ a \in \lambda_{i} \mathbb{Z} }.
\]
As a consequence of \cref{SUB2}, we obtain the following theorem.

\begin{thm} \label{ESC}
    Let $X$ be a bielliptic surface over $k$ of arbitrary characteristic. Then we have a short exact sequence
    \[
        1 
        \to  \Aut (X) \times \mathbb{Z} [2]
        \to \Auteq D(X) 
        \xrightarrow{\pi} \{ A_{1} \otimes A_{2} \in \GL_{4}(\mathbb{Z}) |
        A_{i} \in \Gamma (\lambda_{i}) \}
        \to 1,
    \]
    where $\pi$ is defined by $\pi (\Phi) = \Phi_{M}$
    for $\Phi \in \Auteq D(X)$.
\end{thm}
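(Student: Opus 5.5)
We will prove the three parts of exactness in turn: that $\pi$ is a group homomorphism landing in the stated set, that $\pi$ is surjective, and that $\ker \pi = \Aut(X)\times\mathbb{Z}[2]$.

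\textbf{Homomorphism and image.} Functoriality of $\Phi\mapsto\Phi^{A}_{\ch(\mathcal P)}$ under composition of kernels shows that $\pi$ is a homomorphism. Write $\Gamma$ for the set $\{A_1\otimes A_2 \mid A_i\in\Gamma(\lambda_i)\}$; it is a subgroup by the identity $(A\otimes C)(B\otimes D)=AB\otimes CD$.

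By \cref{MT1}, $\Auteq D(X)$ is generated by shifts, automorphisms, line bundle twists and relative Fourier--Mukai transforms, so it suffices to check that each generator lands in $\Gamma$.
\begin{itemize}
\item The shift $[1]$ acts by $-I_4=(-I)\otimes I$, and $-I\in\Gamma(\lambda_1)$.
\item Line bundles act by the matrices of \cref{LM}, which lie in $\Gamma$ since their $a$-entries are $0$.
\item Relative transforms act by the matrices of \cref{matrix}; there the upper-right entries are $a\lambda_i$, so they lie in $\Gamma$.
\item For $g\in\Aut(X)$, the pushforward $g_*$ acts trivially on $A^0$ and $A^2$ and by an isometry of $\Num(X)$. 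That isometry must preserve or swap the isotropic rays $\mathbb{R}_{\ge0}F_1$ and $\mathbb{R}_{\ge0}F_2$, since these are the classes of effective isotropic divisors. Only $f_1$ is the Albanese map, so $g$ preserves each fibration. Hence $g_*$ fixes $F_1$ and $F_2$ up to the positive scalars forced by the isometry, so $g_*$ acts as the identity.
\end{itemize}

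\textbf{Surjectivity.} Any $A_1\in\Gamma(\lambda_1)$ can be written $A_1=\begin{bmatrix} c& a\lambda_1\\ d'&b\end{bmatrix}$. Setting $d=d'\lambda_1$, the matrix $\begin{bmatrix}c&a\\d&b\end{bmatrix}$ lies in $\SL_2(\mathbb{Z})$ and has $\lambda_1\mid d$. If $a>0$, \cref{RFM} provides a relative transform along $f_1$, and by \cref{SMRFM} a line bundle correction realizes $A_1\otimes I$.

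The cases $a\le 0$ are handled by composing with $-I$ (the shift $[1]$) and with the lower unipotent matrices coming from line bundles via \cref{LM}. Products of these with the matrices having $a>0$ generate all of $\Gamma(\lambda_1)$: for $a=0$ the matrix is $\pm$ a lower unipotent, and otherwise one multiplies by $-I$. The same argument gives $I\otimes A_2$, so every element of $\Gamma$ is attained.

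\textbf{Kernel.} Take $\Psi$ with $\Psi_M=I$. By \cref{Prop:sheaf}, $\Psi(\mathcal O_x)$ is a shifted sheaf with $\ch=(0,0,1)$, hence $\Psi(\mathcal O_x)\simeq \mathcal O_y[n]$. The integer $n$ is even because $\ch$ is preserved, and it is locally constant in $x$, hence constant. Then \cref{JOS} gives $\Psi\simeq (M\otimes -)\circ f_*\circ[n]$. The equality $\Psi_M=I$ forces $M$ to be numerically trivial.

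This is the main obstacle: the stated kernel $\Aut(X)\times\mathbb{Z}[2]$ omits the numerically trivial line bundles $\Pic^\tau(X)$, so these twists must be accounted for somehow. We expect the paper to resolve this either by interpreting the kernel as including these twists (i.e. as $\Pic^\tau(X)\rtimes\Aut(X)\times\mathbb{Z}[2]$), or by identifying them with automorphisms, for instance via translations realizing $\Pic^0$ twists through a Fourier--Mukai identification. We would first check whether $\Pic^0(X)$-twists arise as automorphisms in this way. Conversely, $\Aut(X)\times\mathbb{Z}[2]$ is contained in the kernel by the computations in the first step.
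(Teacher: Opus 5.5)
Your argument follows the paper's route. Surjectivity comes from relative Fourier--Mukai transforms corrected by line-bundle twists (\cref{RFM}, \cref{SMRFM}), and the kernel from \cref{Prop:sheaf} and \cref{JOS}. In two places you are more careful than the paper:
\begin{itemize}
\item You check that $\pi$ actually lands in $\{A_1\otimes A_2\}$. The paper never verifies this; it follows from \cref{MT1} exactly as you do it.
\item You treat the case where the upper-right entry of $A_i$ is $\le 0$, where \cref{RFM} does not apply directly. The paper invokes \cref{RFM} there without comment.
\end{itemize}
Your kernel steps are also fine: the shift is constant and even, and $M$ must be numerically trivial. The passage from $\ch=(0,0,1)$ to a skyscraper uses that a rank-zero sheaf with numerically trivial $c_1$ is zero-dimensional; this holds because a nonzero effective divisor meets an ample class positively.

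The obstacle you found is real, and it is an error in the statement, not a gap in your argument. The paper closes the kernel computation by asserting, via \cref{LM} and \cref{NUM}, that a non-trivial line-bundle twist acts non-trivially on $\mathbb{Z}\oplus\Num(X)\oplus\mathbb{Z}$. Those results only see the class of $L$ in $\Num(X)$, so the assertion fails for every $L\in\Pic^\tau(X)$ with $L\not\simeq\mathcal{O}_X$. There are infinitely many such $L$, since $\underline{\Pic}^0(X)_{\red}$ is an elliptic curve (the paper itself uses this in \cref{AVA}). There are also torsion classes such as $\omega_X$ when it is non-trivial.

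Your second suggested way out cannot work. Suppose $(-)\otimes L\simeq g_*\circ[2m]$ with $g\in\Aut(X)$. Evaluating both sides on $\mathcal{O}_x$ gives $\mathcal{O}_x\simeq\mathcal{O}_{g(x)}[2m]$ for all $x$, so $g=\id$ and $m=0$. Evaluating on $\mathcal{O}_X$ then gives $L\simeq\mathcal{O}_X$.

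So commit to your first option. What your argument proves, and what is true, is the exact sequence with kernel $(\Pic^\tau(X)\rtimes\Aut(X))\times\mathbb{Z}[2]$. Your image and surjectivity arguments are unaffected, and so is \cref{MT1}, since numerically trivial twists are standard autoequivalences.
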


\begin{proof}
    It is clear that $\Aut (X) \times \mathbb{Z} [2] \subset \Ker \pi$, because automorphisms preserve effective divisors and cannot exchange the fibers of the two elliptic fibrations as one has multiple fibers and the other does not.
    Take $\Phi \in \Ker \pi$. 
    Since $\Phi^{A}$ maps $(0,0,1)$ to $(0,0,1)$  
    and $\Phi$ is a sheaf transform by \cref{Prop:sheaf}, 
    $\Phi$ is a standard autoequivalence. 
    By \cref{LM} and \cref{NUM}, 
    non-trivial line bundle tensoring acts on the numerical Chow group non-trivially.
    Thus it follows that $\Phi \in  \Aut (X) \times \mathbb{Z} [2]$.

    Next, we prove the surjectivity of $\pi$. Take $A_{1} \otimes A_{2} \in \GL_{4}(\mathbb{Z})$ with $A_{i} \in \Gamma (\lambda_{i})$. Let $A_{1} \coloneqq (x_{ij})$ and $A_{2} \coloneqq (y_{ij})$. Then we can find a relative Fourier--Mukai transform $\Phi_{1}$ along $f_{1}$ such that
    \[
        \Phi_{1, F_{1}} =
        \begin{bmatrix}
            x_{11} & x_{12} / \lambda_{1} \\
            x_{21}\lambda_{1} & x_{22} \\
        \end{bmatrix}.
    \]
    We can also find a relative Fourier--Mukai transform $\Phi_{2}$ along $f_{2}$ such that
    \[
        \Phi_{2, F_{2}} =
        \begin{bmatrix}
            y_{11} & y_{12} / \lambda_{2} \\
            y_{21}\lambda_{2} & y_{22} \\
        \end{bmatrix}.
    \]
    By \cref{SMRFM}, there exist line bundles $\mathcal{L}_{1}, \mathcal{L}_{2}$ such that
    \[
        (\Phi_{1} \circ ((-) \otimes \mathcal{L}_{1}))_{M}
        =
        \begin{bmatrix}
            x_{11} & x_{12} \\
            x_{21} & x_{22} \\
        \end{bmatrix}
        \otimes
        \begin{bmatrix}
                1 & 0 \\
                0 & 1 \\
        \end{bmatrix}
    \]
    and
    \[
        (\Phi_{2} \circ ((-) \otimes \mathcal{L}_{2}))_{M}
        =
        \begin{bmatrix}
                 1 & 0 \\
                 0 & 1 \\
        \end{bmatrix}
        \otimes
        \begin{bmatrix}
                y_{11} & y_{12} \\
                y_{21} & y_{22} \\
        \end{bmatrix}.
    \]
    Put $\Phi^{\prime}_{i} \coloneqq \Phi_{i} \circ ((-) \otimes \mathcal{L}_{i})$ for each $i \in \{ 1,2 \}$.
    Then $\pi (\Phi_{1}^{\prime} \circ \Phi_{2}^{\prime}) = A_{1} \otimes A_{2}$.
\end{proof}

%% file: sections/section4/app0.tex
\newcounter{breakpoint8}
\setcounter{breakpoint8}{\value{mthm}}
\setcounter{mthm}{\value{breakpoint7}}

\begin{mthm}  \textup{(}Restated\textup{)} 
    A bielliptic surface $X$ over an algebraically closed field $k$ of arbitrary characteristic has no non-trivial Fourier--Mukai partners.
\end{mthm}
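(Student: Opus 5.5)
Let $Y$ be a variety and $\Psi \colon D(Y) \to D(X)$ an equivalence. I would first reduce to $Y$ being a smooth projective surface. By Orlov's representability and the standard invariance results, $Y$ is smooth projective of dimension $2$. Moreover, the order of $\omega_{Y}$ equals the order of $\omega_{X}$, since the Serre functor is preserved under equivalence; in particular it is finite and coprime to $\Char(k)$.

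The aim is to produce an equivalence $\Psi' \colon D(Y) \to D(X)$ sending every skyscraper $\mathcal{O}_{y}$ to a skyscraper $\mathcal{O}_{x}$ up to shift. Then \cref{JOS} gives $Y \simeq X$.

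Fix a point $y \in Y$ and set $\mathcal{E} \coloneqq \Psi(\mathcal{O}_{y})$. \cref{Prop:rank} is already stated for equivalences $D(Y) \to D(X)$ with $Y$ an arbitrary surface. It supplies a composition $\Phi$ of relative Fourier--Mukai transforms with $\rk(\Phi^{-1}(\mathcal{E}))=0$. Note that $\Phi^{-1}\circ\Psi$ is again an equivalence $D(Y)\to D(X)$. Since $\chi(\mathcal{O}_{y},\mathcal{O}_{y})=0$, the same computation as after \cref{Prop:rank} gives
\[
\ch(\Phi^{-1}\Psi(\mathcal{O}_{y}))=(0,aF_{i},b).
\]
Here $a\in\frac{1}{\lambda_i}\mathbb{Z}$, because the Mukai pairing forces the divisor part to have square zero and hence to be a multiple of a single $F_{i}$.

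Next I need analogues of \cref{Prop:frac}, \cref{COP} and \cref{Prop:sheaf} for equivalences $D(Y)\to D(X)$ rather than autoequivalences. For \cref{COP} this is immediate: $\ch(\mathcal{O}_{y})$ pairs to $-1$ with $\ch(\mathcal{O}_{Y})$, so $\ch(\Phi^{-1}\Psi(\mathcal{O}_y))$ is primitive. For the sheaf property, I would argue as follows.
\begin{itemize}
\item If $\omega_{X}\simeq\mathcal{O}_{X}$, the proof of \cref{Prop:not-simple-sheaf} uses only $\dim\Ext^{1}(\mathcal{O}_{y},\mathcal{O}_{y})=2$ on $Y$ and \cref{rigid} on $X$. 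It therefore applies verbatim to any equivalence $D(Y)\to D(X)$.
\item If $\omega_{X}\not\simeq\mathcal{O}_{X}$, I lift via \cref{LCC} to $\tilde{\Psi}\colon D(\tilde{Y})\to D(\tilde{X})$, where $\tilde{X}$ is an abelian surface or a bielliptic surface with trivial canonical bundle. The lifted equivalence is a sheaf transform: in the abelian case by \cite[Proposition 3.2]{MR1921811} (the partner $\tilde Y$ is then abelian too), and in the other case by the trivial-canonical argument just described. Pushing forward along the canonical covers then shows that $\Phi^{-1}\Psi$ sends $\mathcal{O}_{y}$ to a shifted sheaf.
\end{itemize}

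With this in hand, the argument of \cref{frac_not_simple} runs unchanged. The support of $\Phi^{-1}\Psi(\mathcal{O}_{y})$ would otherwise lie in the multiple fibers of $f_{2}$ for all $y$, contradicting essential surjectivity. Hence $a\in\mathbb{Z}$ and $\gcd(a\lambda_{i},b)=1$.

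\cref{RFM} then gives a relative Fourier--Mukai transform $\Phi'$ along $f_{i}$ with $\Phi'^{A}(0,0,1)=(0,aF_{i},b)$. The composite $\Psi' \coloneqq \Phi'^{-1}\Phi^{-1}\Psi$ sends $\mathcal{O}_{y}$ to a shifted sheaf with Chern character $(0,0,1)$. Being simple, this sheaf is a skyscraper $\mathcal{O}_{x}$. The shift is locally constant in $y$ and $Y$ is connected, so after a global shift this holds for all $y$. Then \cref{JOS} gives $Y\simeq X$.

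The main obstacle is the sheaf-transform step in the non-trivial canonical bundle case, since \cref{LCC} lifts to a partner $\tilde Y$ that is only known abstractly. I would first check that $\tilde{Y}$ is derived equivalent to $\tilde X$ and hence is abelian or bielliptic with trivial $\omega$. In either case the needed argument holds: Orlov's result covers abelian partners, and the $\Ext^1$-counting argument works for any source surface.
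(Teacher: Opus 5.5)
Your proposal is correct and follows the paper's own proof step for step: \cref{Prop:rank}, then \cref{Prop:frac} and \cref{COP} to find a relative Fourier--Mukai transform that normalizes the Chern character of the image of a skyscraper to $(0,0,1)$, then \cref{Prop:sheaf} and \cref{JOS}. You are in fact more careful than the paper, which applies \cref{Prop:frac}, \cref{COP} and \cref{Prop:sheaf} (stated only for autoequivalences) directly to an equivalence $D(Y)\to D(X)$. Your extensions to that setting are sound: $\ord(\omega_Y)=\ord(\omega_X)$, the lift $D(\tilde Y)\to D(\tilde X)$, and the $\Ext^1$ count that does not depend on the source surface. The one point that deserves an explicit line in positive characteristic is that $\tilde Y$ is abelian when $\tilde X$ is; this follows, e.g., from $\omega_{\tilde Y}$ being trivial, equality of $\ell$-adic Betti numbers, and the classification of surfaces.
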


\setcounter{mthm}{\value{breakpoint8}}

\begin{proof}
    Let $Y$ be a surface. 
    Suppose that there exists an equivalence $\Psi \colon D(Y) \to D(X)$, and take $y \in Y$. By \cref{Prop:rank}, \cref{Prop:frac} and \cref{COP}, there exists $\Phi \in \Auteq D(X)$ which is a composition of relative Fourier--Mukai transforms and a shift such that
    \[
        (\Phi \circ \Psi)^{A} (0,0,1) = (0,0,1).
    \]
    We  know that $\Phi \circ \Psi$ is a sheaf transform by \cref{Prop:sheaf}. 
    Then $(\Phi \circ \Psi)(\mathcal{O}_{y})$ is a sheaf whose Chern character is $(0,0,1)$. Thus there exists a point $\varphi(y) \in X$ such that
    \[
        (\Phi \circ \Psi) (\mathcal{O}_{y}) \simeq \mathcal{O}_{\varphi(y)}.
    \]
    By \cref{JOS}, it follows that $\varphi$ is an isomorphism. Thus $X \simeq Y$.
\end{proof}

%% file: main.bbl
\providecommand{\bysame}{\leavevmode\hbox to3em{\hrulefill}\thinspace}
\providecommand{\MR}{\relax\ifhmode\unskip\space\fi MR }
\providecommand{\MRhref}[2]{%
  \href{http://www.ams.org/mathscinet-getitem?mr=#1}{#2}
}
\providecommand{\href}[2]{#2}
\begin{thebibliography}{HLT21}

\bibitem[Bad01]{badescu2001algebraic}
L.~Badescu, \emph{Algebraic surfaces}, Graduate Texts in Mathematics, Springer, 2001.

\bibitem[BB17]{MR3592689}
Arend Bayer and Tom Bridgeland, \emph{Derived automorphism groups of {K}3 surfaces of {P}icard rank 1}, Duke Math. J. \textbf{166} (2017), no.~1, 75--124. \MR{3592689}

\bibitem[BM77]{MR0491719}
E.~Bombieri and D.~Mumford, \emph{Enriques' classification of surfaces in char. {$p$}. {II}}, Complex analysis and algebraic geometry, Iwanami Shoten Publishers, Tokyo, 1977, pp.~23--42. \MR{491719}

\bibitem[BM01]{MR1827500}
Tom Bridgeland and Antony Maciocia, \emph{Complex surfaces with equivalent derived categories}, Math. Z. \textbf{236} (2001), no.~4, 677--697. \MR{1827500}

\bibitem[BM17]{MR3713877}
\bysame, \emph{Fourier-{M}ukai transforms for quotient varieties}, J. Geom. Phys. \textbf{122} (2017), 119--127. \MR{3713877}

\bibitem[BO01]{MR1818984}
Alexei Bondal and Dmitri Orlov, \emph{Reconstruction of a variety from the derived category and groups of autoequivalences}, Compositio Math. \textbf{125} (2001), no.~3, 327--344. \MR{1818984}

\bibitem[Bri98]{MR1629929}
Tom Bridgeland, \emph{Fourier-{M}ukai transforms for elliptic surfaces}, J. Reine Angew. Math. \textbf{498} (1998), 115--133. \MR{1629929}

\bibitem[GM13]{gelfand2013methods}
S.~I. Gelfand and Y.~I. Manin, \emph{Methods of homological algebra}, Springer Science \& Business Media, 2013.

\bibitem[HL10]{huybrechts2010geometry}
Daniel Huybrechts and Manfred Lehn, \emph{The geometry of moduli spaces of sheaves}, Cambridge University Press, 2010.

\bibitem[HLT17]{HonigsLieblichTirabassi2017arXiv1708.03409v1}
Katrina Honigs, Max Lieblich, and Sofia Tirabassi, \emph{Fourier--mukai partners of enriques and bielliptic surfaces in positive characteristic}, 2017, arXiv:1708.03409v1 [math.AG] (version submitted on 10 Aug 2017).

\bibitem[HLT21]{MR4247995}
\bysame, \emph{Fourier-{M}ukai partners of {E}nriques and bielliptic surfaces in positive characteristic}, Math. Res. Lett. \textbf{28} (2021), no.~1, 65--91. \MR{4247995}

\bibitem[Huy06]{MR2244106}
D.~Huybrechts, \emph{Fourier-{M}ukai transforms in algebraic geometry}, Oxford Mathematical Monographs, The Clarendon Press, Oxford University Press, Oxford, 2006. \MR{2244106}

\bibitem[IU05]{MR2198807}
Akira Ishii and Hokuto Uehara, \emph{Autoequivalences of derived categories on the minimal resolutions of {$A_n$}-singularities on surfaces}, J. Differential Geom. \textbf{71} (2005), no.~3, 385--435. \MR{2198807}

\bibitem[Kaw02]{10.4310/jdg/1090351323}
Yujiro Kawamata, \emph{{D-Equivalence and K-Equivalence}}, Journal of Differential Geometry \textbf{61} (2002), no.~1, 147 -- 171.

\bibitem[KO95]{S_A_Kuleshov_1995}
S.~A. Kuleshov and D.~O. Orlov, \emph{Exceptional sheaves on del pezzo surfaces}, Izvestiya: Mathematics \textbf{44} (1995), no.~3, 479.

\bibitem[Kos22]{10.1093/imrn/rnac260}
Naoki Koseki, \emph{On the {B}ogomolov–{G}ieseker inequality in positive characteristic}, International Mathematics Research Notices \textbf{2023} (2022), no.~24, 20784--20811.

\bibitem[Muk81]{nmj/1118786312}
Shigeru Mukai, \emph{{Duality between $D(X)$ and $D(\hat X)$ with its application to Picard sheaves}}, Nagoya Mathematical Journal \textbf{81} (1981), 153 -- 175.

\bibitem[Orl02]{MR1921811}
D.~O. Orlov, \emph{Derived categories of coherent sheaves on abelian varieties and equivalences between them}, Izv. Ross. Akad. Nauk Ser. Mat. \textbf{66} (2002), no.~3, 131--158. \MR{1921811}

\bibitem[Orl03]{MR1998775}
\bysame, \emph{Derived categories of coherent sheaves and equivalences between them}, Uspekhi Mat. Nauk \textbf{58} (2003), no.~3(351), 89--172. \MR{1998775}

\bibitem[Pin05]{Pink-FiniteGroupSchemes}
Richard Pink, \emph{Finite commutative group schemes}, Lecture notes, 2005, Lecture course at ETH Zürich, 2004--2005.

\bibitem[Pot15]{potter2017derived}
Rory Potter, \emph{Derived autoequivalences of bielliptic surfaces}, 2017, arXiv:1701.01015.

\bibitem[Ser90]{MR1038716}
Fernando Serrano, \emph{Divisors of bielliptic surfaces and embeddings in {${\bf P}^4$}}, Math. Z. \textbf{203} (1990), no.~3, 527--533. \MR{1038716}

\bibitem[Ueh16]{MR3568337}
Hokuto Uehara, \emph{Autoequivalences of derived categories of elliptic surfaces with non-zero {K}odaira dimension}, Algebr. Geom. \textbf{3} (2016), no.~5, 543--577. \MR{3568337}

\bibitem[Ueh17]{MR3652778}
\bysame, \emph{Fourier-{M}ukai partners of elliptic ruled surfaces}, Proc. Amer. Math. Soc. \textbf{145} (2017), no.~8, 3221--3232. \MR{3652778}

\bibitem[UW24]{uehara2024fouriermukaipartnersellipticruled}
Hokuto Uehara and Tomonobu Watanabe, \emph{Fourier--{M}ukai partners of elliptic ruled surfaces over arbitrary characteristic fields}, 2024.

\end{thebibliography}
